\UseAllTwocells \xyoption{frame} \CompileMatrices
\newtheorem{prop}{Proposition}
\newtheorem{lem}[prop]{Lemma}
\newtheorem{cor}[prop]{Corollary}
\newtheorem{thm}[prop]{Theorem}
\newtheorem{rem}[prop]{Remark}
\newtheorem{question}[prop]{Question}
\newtheorem{conjecture}[prop]{Conjecture}
\newtheorem{defn}[prop]{Definition}
\newtheorem{notation}[prop]{Notation}
\theoremstyle{remark}
\theoremstyle{remark}
\numberwithin{equation}{section}
\newcommand{\X}{\mathcal{X}}
\newcommand{\Y}{\mathcal{Y}}
\newcommand{\Z}{\mathcal{Z}}
\newcommand{\M}{\mathcal{M}}
\newcommand{\C}{\mathcal{C}}
\newcommand{\D}{\mathcal{D}}
\newcommand{\bt}{\mathbf{t}}
\newcommand{\on}{\operatorname}
\newcommand{\bM}{\overline{\mathcal M}}
\def\<{\left\langle}
\def\>{\right\rangle}
\def\b1{{\mathbf 1}}
\begin{document}

\title{A Gromov-Witten theory for simple normal-crossing pairs without log geometry}
\author{Hsian-Hua Tseng}
\address{Department of Mathematics\\ Ohio State University\\ 100 Math Tower, 231 West 18th Ave.\\Columbus\\ OH 43210\\ USA}
\email{hhtseng@math.ohio-state.edu}

\author{Fenglong You}
\address{Department of Mathematics, University of Oslo, PO Box 1053 Blindern,
0316 Oslo, Norway}
\email{youf@math.uio.no}

\date{\today}

\begin{abstract}
    We define a new Gromov-Witten theory relative to simple normal crossing divisors as a limit of Gromov-Witten theory of multi-root stacks. Several structural properties are proved including relative quantum cohomology, Givental formalism, Virasoro constraints (genus zero) and a partial cohomological field theory. Furthermore, we use the degree zero part of the relative quantum cohomology to provide an alternative mirror construction of Gross-Siebert \cite{GS19} and to prove the Frobenius structure conjecture of Gross-Hacking-Keel \cite{GHK} in our setting.
\end{abstract}

\maketitle

\tableofcontents

\section{Introduction}
\subsection{The theory}
Let $X$ be a smooth projective variety over $\mathbb{C}$ and let $$D_1, ..., D_n\subset X$$ be smooth irreducible divisors. Suppose $$D:=D_1+...+D_n$$ is simple normal crossing.

For $r_1, ..., r_n\in \mathbb{N}$ pairwise coprime, the multi-root stack $$X_{D, \vec{r}}:=X_{(D_1, r_1),...,(D_n, r_n)},$$ defined in Definition \ref{defn-multi-root}, is smooth. The first result of this paper shows that the Gromov-Witten theory of $X_{D, \vec{r}}$ is a polynomial in $r_1,...,r_n$, see Corollary \ref{cor-limit} in Section \ref{sec:application}. This is achieved by certain polynomiality results for root stacks associated to a pair $(\X,\D)$ of Deligne-Mumford stack $\X$ and a smooth divisor $\D\subset \X$.
\begin{thm}
For $r$ sufficiently large, genus $0$ Gromov-Witten invariant of $\X_{\D,r}$ is independent of $r$. Genus $g>0$ Gromov-Witten invariant of $\X_{\D, r}$ is a polynomial in $r$. Furthermore, the constant term of the polynomial is the corresponding relative Gromov-Witten invariant of $(\X,\D)$.
\end{thm}
We refer the readers to Theorems \ref{thm:poly} and \ref{thm:poly-neg} in Section \ref{sec:polynomiality} for the precise statement.  Taking the constant terms yields a theory canonically attached to the pair $(X, D)$. See Definition \ref{defn:GW_inv} in Section \ref{sec:application} for the precise definition of this new theory. 

We may view this new theory formally as the Gromov-Witten theory of the {\em infinite root stack} $$X_{D, \infty}$$  associated to $(X, D)$, as constructed in \cite{TV}, because in genus $0$ we show that the Gromov-Witten theory of $X_{D, \vec{r}}$ is independent of $r_1,...,r_n$ and taking constant terms is the same as taking large $r_i$ limit.
\begin{question}
Can one define Gromov-Witten theory of infinite root stacks directly? 
\end{question}

Naturally, one can expect such a  definition to coincide with the constant terms of Gromov-Witten theory of finite root stacks. By \cite{TV}, the infinite root stack structure determines the logarithmic structure. It is natural to expect that infinite root stack Gromov-Witten theory should determine logarithmic Gromov-Witten theory.

\subsection{Logarithmic theory}
Our new theory has some advantages:
\begin{enumerate}
\item {\em Negative contact orders are naturally included.} 
A relative marking with positive contact order $k>0$ along a divisor $D_i$ corresponds to an orbifold marking with $age (N_{D_i/X_{D,\vec r}})$ equals to $k/r_i$  for $r_i\gg 1$. On the other hand, a relative marking with negative contact order $k<0$ along a divisor $D_i$ comes from an orbifold marking with $age (N_{D_i/X_{D,\vec r}})$ equals to $1+k/r_i$  for $r_i\gg 1$. Roughly speaking, if we have negative contact order with a divisor $D_i$ at a marking, then the irreducible component of the curve containing this marking should map into $D_i$. When $D$ is irreducible, we recover relative Gromov-Witten theory with negative contact orders defined in \cite{FWY} and \cite{FWY19} which is a generalization of the usual relative Gromov-Witten theory of \cite{LR}, \cite{IP}, \cite{Li01} and \cite{Li02}

    \item {\em It enjoys very nice properties.} In particular, we highlight the following properties.
\begin{itemize}
    \item In genus zero, we have
\begin{itemize}
    \item Topological recursion relation (TRR) (Section \ref{Sec:univ_eqns})
    \item Witten-Dijkgraaf-Verlinde-Verlinde (WDVV) equation (Section \ref{Sec:univ_eqns})
    \item Relative quantum cohomology ring (Section \ref{sec:rel-qh})
    \item Givental formalism (Section \ref{sec:givental})
    \item Virasoro constraints (Section \ref{Sec:vir}).
\end{itemize}
\item In all genera, we have
\begin{itemize}
    \item string, dilaton, and divisor equations (Section \ref{Sec:univ_eqns})
    \item a Partial CohFT (Section \ref{sec:CohFT}).
\end{itemize}
\end{itemize}
    
    \item {\em It is quite computable.} It has already been proved in \cite{TY20b} that one can construct an $I$-function for the Gromov-Witten theory of $X_{D,\infty}$. Therefore, Givental formalism that we developed in Section \ref{sec:givental} provides a necessary foundation for \cite{TY20b} to state a mirror theorem for $X_{D,\infty}$ (see Theorem \ref{thm:mirror}). The mirror theorem allows us to compute genus zero invariants of $X_{D,\infty}$ in various cases. Some examples and applications were given in \cite{TY20b}. Therefore, one may expect that Gromov-Witten invariants of infinite root stacks are more accessible (than log Gromov-Witten invariants) in terms of computation, as lots of sophisticated techniques in traditional Gromov-Witten theory are available.

\end{enumerate}

We may view our new theory as a logarithmic Gromov-Witten theory of $(X, D)$. As such, it is natural to ask 
\begin{question}
How is the new theory related to the (punctured) logarithmic Gromov-Witten theory of Abramovich-Chen-Gross-Siebert defined in \cite{GS13}, \cite{C}, \cite{AC14}, \cite{ACGS}?
\end{question}

In \cite{TY20b}, we showed by explicit computations that these two theories are equal in some cases. When $D$ is irreducible, the main results of \cite{ACW} and \cite{TY} imply that these two theories are the same for invariants without punctured points\footnote{The arguments easily extend to the case $D_i$'s are disjoint, showing that the two theories are the same in this case, too.}. As pointed out by Dhruv Ranganathan, these two theories are not equal in general. For example, logarithmic invariants are invariant uner birational transformation \cite{AW}, but orbifold invariants are not.
However, it is perhaps reasonable to expect that our new theory and the punctured logarithmic Gromov-Witten theory are equivalent somehow. It would be interesting to find the precise relation between these two theories. Then, one can compute punctured invariants through corresponding invariants of $X_{D,\infty}$. Recently, the birational invariance of orbifold invariants has been investigated in \cite{BNR} and \cite{You22}. 

Another interesting question is 
\begin{question}[R. Pandharipande]
Does the new theory have a {\em degeneration formula}?
\end{question}
When $D$ is irreducible and there are no punctured points, it is proved in \cite{TY} that our theory is the relative Gromov-Witten theory of \cite{Li01}, which admits a  degeneration formula \cite{Li02}. A degeneration formula for logarithmic Gromov-Witten theory can be found in \cite{ACGS17}, \cite{KLR} and \cite{R}.

\subsection{Mirror constructions}

In \cite{GS18} and \cite{GS19}, Gross-Siebert constructed mirrors to a log Calabi-Yau pair $(X,D)$ and a maximally unipotent degeneration $X\rightarrow S$ of log Calabi-Yau manifolds. The mirrors are constructed from the degree $0$ part of the relative quantum cohomology ring $$QH^0(X,D).$$ A key ingredient is the punctured Gromov-Witten theory which is used to describe the structure constants for the product rule. 

We construct a relative quantum cohomology ring for the pair $(X,D)$ in Section \ref{sec:rel_QC} using Gromov-Witten invariants of $X_{D,\infty}$. The associativity of the relative quantum cohomology follows from the WDVV equation. Restricting it to the degree $0$ part of the relative quantum cohomology ring, $$QH^0(X_{D,\infty}),$$ there is a product structure naturally coming from the restriction of the relative quantum product. Similar to \cite{GS19}, the associativity is not expected to be preserved under this restriction. We show in Section \ref{sec:frob} that the associativity is true under some assumptions. More precisely, we have the following theorem.

\begin{thm}[=Theorem \ref{thm-stru-const}]\label{thm-intro-stru-const}
When $(K_X+D)$ is nef or anti-nef, the structure constants $$N^{\on{orb},\beta}_{p_1,p_2,-r}$$ define, via (\ref{equ-prod}), a commutative, associative $S_I$-algebra structure on $R_I$ with unit given by $\vartheta_0$,  where $S_I$ and $R_I$ are defined in (\ref{S-I}) and (\ref{R-I}) respectively; the structure constants are defined in (\ref{def-stru-const}).
\end{thm}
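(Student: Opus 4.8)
The plan is to derive both commutativity and associativity of the product $(\ref{equ-prod})$ from structural properties of the genus-zero relative Gromov--Witten theory of $X_{D,\infty}$ proved earlier in the paper --- principally the WDVV equation of Section~\ref{Sec:univ_eqns} --- supplemented by a dimension-counting vanishing in which the hypothesis that $K_X+D$ is nef or anti-nef is used. This is modeled on the associativity argument of Gross--Siebert \cite{GS19}: the point is that only for $(K_X+D)$ (anti-)nef do the unwanted correction terms produced by WDVV drop out.

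Commutativity and the unit axiom are essentially formal. By the definition $(\ref{def-stru-const})$, $N^{\on{orb},\beta}_{p_1,p_2,-r}$ is a three-pointed genus-zero invariant of $X_{D,\infty}$ whose first two markings carry the classes attached to the contact data $p_1,p_2$ and whose third marking has negative contact order $-r$; commutativity is then immediate from the $S_2$-symmetry of Gromov--Witten invariants under interchanging the first two markings. For the unit, $\vartheta_0$ is the fundamental class, and applying the string/fundamental-class equation of Section~\ref{Sec:univ_eqns} to the marking carrying $\vartheta_0$, the only nonvanishing contributions reduce to the classical pairing; hence $\vartheta_0\cdot\vartheta_p=\vartheta_p$ in $R_I$.

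For associativity I would pair the two iterated products $(\vartheta_{p_1}\cdot\vartheta_{p_2})\cdot\vartheta_{p_3}$ and $\vartheta_{p_1}\cdot(\vartheta_{p_2}\cdot\vartheta_{p_3})$ against $\vartheta_r$, expand each using $(\ref{def-stru-const})$, and recognize the two resulting sums as the two boundary degenerations $(12\,|\,34)$ and $(13\,|\,24)$ of a four-pointed invariant with insertions $\vartheta_{p_1},\vartheta_{p_2},\vartheta_{p_3}$ and a fourth marking of negative contact order $-r$. The WDVV equation of Section~\ref{Sec:univ_eqns} asserts that these two degenerations agree; however, the sum over the node that WDVV produces runs a priori over a full basis of the orbifold cohomology of $X_{D,\infty}$, i.e.\ over cohomology of $X$ and of all the strata $D_J$ at all (positive, zero, and negative) contact orders, not only over the theta-classes $\vartheta_s$. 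So the content of the proof is to show that, under the hypothesis, every basis element of the node that is not one of the $\vartheta_s$ contributes zero, so that the WDVV identity collapses to precisely the associativity identity among the $N^{\on{orb},\beta}_{p_1,p_2,-r}$.

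This vanishing is the main obstacle, and it is where the hypothesis on $K_X+D$ enters. When $K_X+D$ is nef, the virtual dimension of the relevant three-pointed moduli space $\overline{M}_{0,3}(X_{D,\infty},\beta_1)$ with $\beta_1\neq 0$ is small enough that an invariant $\langle\vartheta_{p_1},\vartheta_{p_2},\gamma\rangle_{\beta_1}$ can be nonzero only when $\gamma$ is (Poincar\'e dual to) a fundamental class of a twisted sector --- that is, one of the $\vartheta_s$ --- while inserting a positive-contact-order class pushes the contact constraint out of the admissible range for three markings, so those basis vectors decouple; when $K_X+D$ is anti-nef the degree inequality is reversed and one argues dually, pairing against point-type classes on the strata. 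The $\beta_1=0$ part is classical, governed by cup product on $X$ and on the strata, where the restricted product is visibly associative and matches the description of $QH^0(X_{D,\infty})$ of Section~\ref{sec:rel_QC}. Finally, finiteness of all the sums --- needed for the statement to make sense over $S_I$, cf.\ $(\ref{S-I})$ and $(\ref{R-I})$ --- follows from the monoid/cone structure of effective curve classes together with the polynomiality and effectivity constraints established in Section~\ref{sec:polynomiality}. Combining these: WDVV gives the equality of the two expansions, so $(\ref{equ-prod})$ is associative with unit $\vartheta_0$, and together with the commutativity already observed this yields the claimed commutative associative $S_I$-algebra structure on $R_I$.
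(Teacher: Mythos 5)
Your proposal follows essentially the same route as the paper: commutativity from the symmetry of the invariants, the unit from the string equation, finiteness from $P\setminus I$ being finite, and associativity by matching the product identity against the WDVV equation and killing all non-theta node terms via the virtual dimension constraint under the (anti-)nef hypothesis. One minor imprecision: in the paper's count, when $K_X+D$ is anti-nef one analyzes the factor $\left\langle [1]_{p_1},[1]_{p_2},\tilde{T}_{-\vec s,k}\right\rangle_{0,3,\beta_1}$ and the class forced at that node is the point class $[pt]_{-s}$ with non-positive contact orders (so that its dual $[1]_s=\vartheta_s$ is what propagates into the other factor), whereas when $K_X+D$ is nef one analyzes the factor containing $[pt]_{-r}$ and forces $\tilde{T}_{\vec s}^k=[1]_s$ --- your sketch blurs which class sits at the node and which case uses which factor, but the mechanism is identical.
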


\begin{rem}
Theorem \ref{thm-intro-stru-const} is \cite[Theorem 1.9]{GS19}, which is a main theorem of \cite{GS19}, if we replace the structure constants by the corresponding punctured Gromov-Witten invariants. It is worth noting that in our setting the proof of the associativity is substantially shorter. Gross--Siebert also proved the case when $(X,D)$ is (non-minimal) log Calabi-Yau in \cite[Theorem 1.12]{GS19}, which would avoid issues from the existence of minimal models. We plan to study this case in the future.
\end{rem}

Furthermore, we show that the Frobenius structure conjecture of Gross-Hacking-Keel \cite{GHK} holds in our setting.
\begin{thm}[=Theorem \ref{thm-frob}]
When $(K_X+D)$ is nef or anti-nef, the Frobenius structure conjecture
(see Conjecture \ref{conj-frob}) holds for $QH^0(X_{D,\infty})$.
\end{thm}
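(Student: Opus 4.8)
The plan is to deduce the Frobenius structure conjecture for $QH^0(X_{D,\infty})$ directly from the algebra structure established in Theorem \ref{thm-intro-stru-const} (= Theorem \ref{thm-stru-const}), by matching the data appearing in Conjecture \ref{conj-frob} (the trace pairing, the theta basis, and the compatibility of structure constants with curve classes and contact orders) against the relative quantum cohomology package built in Section \ref{sec:rel_QC} and its degree-zero restriction in Section \ref{sec:frob}. First I would recall the precise statement of the Frobenius structure conjecture: one wants the ring $QH^0(X_{D,\infty})$ to be a finite-rank free module with a canonical basis $\{\vartheta_q\}$ indexed by integral points (or by $D$-strata data), a nondegenerate trace form, and structure constants that are nonnegative integral combinations of curve-counting contributions, so that the pairing together with the product makes it a Frobenius algebra. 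Under the hypothesis that $(K_X+D)$ is nef or anti-nef, Theorem \ref{thm-stru-const} already gives commutativity, associativity, and the unit $\vartheta_0$; what remains is to produce the Frobenius trace pairing and verify its compatibility with the product.

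The key steps, in order, are: (i) identify the trace pairing on $R_I$ as (the restriction of) the relative Poincaré pairing from the relative quantum cohomology of $X_{D,\infty}$, and check it is nondegenerate on the degree-zero part $R_I$ — this should follow from the nondegeneracy of the orbifold Poincaré pairing on $X_{D,\vec r}$ together with the polynomiality/limit statements (Theorems \ref{thm:poly}, \ref{thm:poly-neg} and Corollary \ref{cor-limit}); (ii) express $\langle \vartheta_{p_1}\cdot \vartheta_{p_2}, \vartheta_{p_3}\rangle$ as a sum of the three-pointed structure constants $N^{\on{orb},\beta}_{p_1,p_2,-r}$ weighted by $q^\beta$, and observe that the symmetry of these genus-zero invariants under permuting the three marked points gives the Frobenius (cyclic) symmetry $\langle a\cdot b, c\rangle = \langle a, b\cdot c\rangle$; (iii) confirm that each $N^{\on{orb},\beta}_{p_1,p_2,-r}$ is a nonnegative integer — as a genus-zero Gromov-Witten count on a smooth proper Deligne-Mumford stack $X_{D,\vec r}$ it is rational a priori, so one must invoke the specific form of the insertions (fundamental classes / identity-type classes along the divisors) forcing the virtual dimension to match and the invariant to be enumerative, exactly as in the irreducible case of \cite{FWY}; (iv) assemble (i)–(iii) with Theorem \ref{thm-stru-const} to conclude that $QH^0(X_{D,\infty})$ is the Frobenius algebra predicted by Conjecture \ref{conj-frob}, and check that the canonical basis $\{\vartheta_q\}$ and its indexing set agree with the combinatorial data in \cite{GHK}.

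I expect the main obstacle to be step (iii) together with the bookkeeping in step (ii): showing that the relevant structure constants are genuinely the nonnegative integers demanded by the Frobenius structure conjecture, rather than merely rational numbers, and that the trace-compatibility identity holds on the nose after restricting to degree zero (where, as emphasized in the text following Theorem \ref{thm-intro-stru-const}, associativity and other structures are \emph{not} automatically inherited from the full relative quantum cohomology). Concretely, the delicate point is that the WDVV-type argument used for associativity in Section \ref{sec:frob} must be re-run, or extended, to yield the Frobenius symmetry of the pairing on the degree-zero piece; this is where the nef / anti-nef hypothesis on $(K_X+D)$ is used to control which curve classes $\beta$ contribute and to guarantee the needed finiteness and positivity. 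Once the pairing is in hand, the remaining verification that the indexing of $\{\vartheta_q\}$ matches the lattice-point description of \cite{GHK} is essentially the same identification already made in the mirror-construction part of the paper (Section \ref{sec:frob} and the discussion of \cite{GS19}), so it should be routine.
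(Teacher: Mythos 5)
There is a genuine gap: you have proved (or rather, outlined) the wrong statement. Conjecture \ref{conj-frob} as formulated in the paper is not the assertion that $QH^0(X_{D,\infty})$ carries a Frobenius (trace) pairing with nonnegative integral structure constants; it is the concrete identity that the coefficient of $\vartheta_0$ in the $m$-fold product $\vartheta_{p_1}\star_{\on{orb}}\cdots\star_{\on{orb}}\vartheta_{p_m}$ equals $\sum_\beta N^{\on{orb},\beta}_{p_1,\ldots,p_m,0}q^\beta$, where $N^{\on{orb},\beta}_{p_1,\ldots,p_m,0}=\langle [1]_{p_1},\ldots,[1]_{p_m},[pt]_0\bar\psi^{m-2}\rangle_{0,m+1,\beta}$ is an $(m+1)$-pointed \emph{descendant} invariant. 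Your steps (i)--(iv) never mention these descendant invariants, never address how the $\bar\psi^{m-2}$ insertion is to be converted into an iterated three-point product, and instead spend effort on a trace pairing, Frobenius symmetry, and integrality/nonnegativity of structure constants --- none of which is claimed by the theorem, and the integrality in (iii) is not provable by the methods of this paper (the invariants are rational numbers a priori and no enumerativity is asserted).

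The actual proof is a different mechanism: apply the genus-zero topological recursion relation (\ref{trr}) to strip one power of $\bar\psi$ from $N^{\on{orb},\beta}_{p_1,\ldots,p_m,0}$, then use the virtual dimension constraint (\ref{vir-dim}) together with the nef or anti-nef hypothesis on $K_X+D$ to show that in the resulting splitting sum only the terms where the class at the node is $[pt]_{-s}$ (resp.\ $[1]_s$) for $s\in B(\mathbb{Z})$, with the appropriate distribution of the remaining markings, survive; iterating this $(m-3)$ times expresses $N^{\on{orb},\beta}_{p_1,\ldots,p_m,0}$ as $\sum N^{\on{orb},\beta_1}_{p_1,p_2,-s_1}N^{\on{orb},\beta_2}_{s_1,p_3,-s_2}\cdots N^{\on{orb},\beta_{m-1}}_{s_{m-2},p_m,0}$, which is by definition the $\vartheta_0$-coefficient of the iterated product. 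The nef/anti-nef hypothesis enters exactly in these degree estimates (forcing $|S_2|=0$, $\int_{\beta_2}[K_X+D]=0$, and the node class to be of theta type), not in establishing a pairing. Without the TRR step and these vanishing arguments your outline cannot reach the statement being claimed.
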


In Section \ref{sec:mirror-constr}, we use the algebra in Theorem \ref{thm-intro-stru-const} to construct mirrors following the Gross-Siebert program (see \cite{GS18} and \cite{GS19}). Naturally, one can ask
\begin{question}
How are the resulting mirrors related to mirrors from other constructions?
\end{question}

One can expect that the resulting mirrors are closely related to, if not the same as, Gross-Siebert mirrors. One such evidence is given in \cite[Section 6]{TY20b} where we obtained a mirror identity between quantum periods of Fano varieties and classical periods of their mirror Landau-Ginzburg potentials by replacing log invariants with formal invariants of infinite root stacks.

\subsection{Acknowledgement}
 We thank Mark Gross, Rahul Pandharipande, Dhruv Ranganathan, and Helge Ruddat for valuable comments and suggestions.
 
 H.-H. T. is supported in part by Simons foundation collaboration grant.  F. Y. is supported by a postdoctoral fellowship funded by NSERC and Department of Mathematical Sciences at the University of Alberta.

\section{Polynomiality}\label{sec:polynomiality}
In this section, we generalize the main results of \cite{TY}, \cite{FWY} and \cite{FWY19} to the case when the target $\X$ is a Deligne-Mumford stack instead of a variety. In the next section, we will use these results to prove the polynomiality of Gromov-Witten theory of multi-root stacks.

\subsection{Set-up}
Let $\X$ be a smooth proper Deligne-Mumford stack over $\mathbb{C}$ with projective coarse moduli space. Let $$\D\subset \X$$ be a smooth irreducible divisor. Assume that $r\in \mathbb{N}$ is coprime with the order of any stabilizer of $\X$. Then the stack of $r$-th roots along $\D$, $$\X_{\D,r},$$ is smooth and we consider its Gromov-Witten theory.

Given an effective curve class $\beta \in H_2(\X,\mathbb Q)$, let 
\[
\vec k=(k_1,\ldots,k_m)\in (\mathbb Q^\times)^m
\]
be a vector that satisfies  
\[
\sum_{j=1}^m k_j=\int_\beta [\D]. 
\]

The number of positive and negative elements in $\vec k$ are denoted by $m_+$ and $m_-$ respectively. So
\[
m=m_++m_-.
\]

We assume that $r$ is sufficiently large. We consider the moduli space $$\bM_{g,\vec k,n}(\X_{\D,r},\beta)$$ of $(m+n)$-pointed, genus $g$, degree $\beta \in H_2(\X,\mathbb Q)$ orbifold stable maps to $\X_{\D,r}$ where the $j$-th marking is an orbifold marking with $age (N_{\D/\X})$ equals to $k_j/r$ if $k_j>0$; the $j$-th marking is an orbifold marking with $age (N_{\D/\X})$ equals to $1+k_j/r$ if $k_j<0$; there are $n$ extra markings that map to $\underline{\mathcal I}\X$, the rigidified inertia stack of $\X$. We consider the forgetful map
\[
\tau_{\on{orb}}:\bM_{g,\vec k,n}(\X_{\D,r},\beta) \rightarrow \bM_{g,m+n}(\X,\beta)\times_{(\underline{\mathcal I}\X)^m} (\underline{\mathcal I}\D)^m.
\]

We first consider the case when $m_-=0$, namely, there are only positive contact orders. In this case, we write $$\bM_{g,\vec k,n}(\X/\D,\beta)$$ for the corresponding moduli space of relative orbifold stable maps to $(\X,\D)$ where the contact orders are given by $\vec k$. We consider the forgetful map
\[
\tau_{\on{rel}}:\bM_{g,\vec k,n}(\X/\D,\beta) \rightarrow \bM_{g,m+n}(\X,\beta)\times_{(\underline{\mathcal I}\X)^m} (\underline{\mathcal I}\D)^m.
\]

\begin{thm}\label{thm:poly}
For $m_-=0$ and $r$ sufficiently large, genus $0$ Gromov-Witten invariant of $\X_{\D,r}$ is independent of $r$. Genus $g>0$ Gromov-Witten invariant of $\X_{\D, r}$ is a polynomial in $r$. Furthermore, the constant term of the polynomial is the corresponding relative Gromov-Witten invariant of $(\X,\D)$. More precisely, we have the following results at the cycle level.
\[
\left[(\tau_{\on{orb}})_*\left[\bM_{g,\vec k,n}(\X_{\D,r},\beta)\right]^{\on{vir}}\right]_{r^0}=(\tau_{\on{rel}})_*\left[\bM_{g,\vec k,n}(\X/\D,\beta)\right]^{\on{vir}}
\]
and 
\[
(\tau_{\on{orb}})_*\left[\bM_{0,\vec k,n}(\X_{\D,r},\beta)\right]^{\on{vir}}
\]
is independent of $r$, where $[\cdots]_{r^0}$ means the constant term of a polynomial in $r$.
\end{thm}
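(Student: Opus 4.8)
The plan is to run the strategy of \cite{FWY}, \cite{FWY19} and \cite{TY}, checking that each step survives the passage from a smooth variety to a smooth Deligne--Mumford stack $\X$. The geometric input is degeneration to the normal cone of $\D\subset\X$: deforming $\X$ to $\X\cup_\D\mathcal P$ with $\mathcal P:=\mathbb P_\D\big(N_{\D/\X}\oplus\mathcal O_\D\big)$ lifts to a degeneration of the root stack $\X_{\D,r}$ into $\X$ glued along a section $\D_0$ of $\mathcal P$ with the root stack $\mathcal P_r$ of $\mathcal P$ taken along the complementary section $\D_\infty$. Since $r$ is coprime to the orders of all stabilizers of $\X$, the orbifold degeneration formula (in the Deligne--Mumford-target form of Abramovich--Fantechi) applies and expresses
\[
(\tau_{\on{orb}})_*\left[\bM_{g,\vec k,n}(\X_{\D,r},\beta)\right]^{\on{vir}}
\]
as a sum, over splittings of $\beta$ and of the markings, of products along $(\underline{\mathcal I}\D)$-diagonals of a contribution from the main component $(\X,\D)$ with a contribution from the bubble $(\mathcal P_r,\D_0)$, where the $\vec k$-markings become interior orbifold markings mapping to the gerbe over $\D_\infty$. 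The main-component contribution is precisely the corresponding building block of the relative theory of $(\X,\D)$, so the whole problem reduces to the local model $\mathcal P$.

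For the local model one applies virtual localization for the fibrewise $\mathbb C^*$-action on $\mathcal P=\mathbb P_\D(N_{\D/\X}\oplus\mathcal O_\D)$, which lifts canonically to $\mathcal P_r$. Exactly as in \cite{FWY} and \cite{FWY19}, the relative-orbifold invariants of $(\mathcal P_r,\D_0)$ decompose into products of rubber integrals over $\D_0\cong\D$ and twisted Hurwitz--Hodge integrals on moduli of stable maps to the root gerbe over $\D_\infty$ (a $\mathbb Z/r$-gerbe over $\D$); the only change is that $\D$ is now a Deligne--Mumford stack. All of the $r$-dependence lives in the gerbe factors, which are polynomial in $r$ by the Bernoulli-polynomial evaluation of such Hurwitz--Hodge integrals used in \cite{FWY19}, while the rubber and $\D$-factors are $r$-independent; this gives the polynomiality in $r$ for $g>0$. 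In genus $0$, dimension and rigidity constraints force every fixed locus with a nontrivial bubble to contribute zero, so the bubble acts as the identity and the orbifold invariant equals the relative invariant; for general $g$, extracting the $r^0$-coefficient and comparing with the (trivial) degeneration formula for the relative theory of $(\X,\D)$ identifies the constant term with $(\tau_{\on{rel}})_*[\bM_{g,\vec k,n}(\X/\D,\beta)]^{\on{vir}}$.

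The main obstacle is bookkeeping rather than conceptual. In \cite{FWY} and \cite{FWY19} the orbifold moduli space, its perfect obstruction theory, and its identification with a fibre product of relative, rubber and gerbe pieces were set up for $\X$ a variety; one must verify that generic isotropy on $\X$ --- hence on $\D$, on $\mathcal P$, and on the rigidified inertia stacks $\underline{\mathcal I}\X$, $\underline{\mathcal I}\D$ that now serve as evaluation targets --- does not interfere. Concretely, the age prescriptions $k_j/r$ and $1+k_j/r$ along $N_{\D/\X}$ must be compatible with the isotropy already recorded by the other $n$ markings valued in $\underline{\mathcal I}\X$, and all gluing of virtual classes must be carried out along $(\underline{\mathcal I}\D)^m$-diagonals in the orbifold sense. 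Because $r$ is coprime to all stabilizer orders, the $r$-th root construction adds only $\mathbb Z/r$-isotropy transverse to $\D$ and combines with the existing isotropy as a direct product, so these compatibilities hold and the computations of \cite{FWY}, \cite{FWY19}, \cite{TY} go through unchanged; making this precise is the technical core. The case $m_->0$, involving markings of negative contact order, is postponed to Theorem \ref{thm:poly-neg}.
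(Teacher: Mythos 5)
Your overall route is the same as the paper's: deformation to the normal cone of $\D\subset\X$, the orbifold degeneration formula of Abramovich--Fantechi, $\mathbb{C}^*$-localization on the local model $\mathbb{P}(N_{\D/\X}\oplus\mathcal{O})$ rooted along one section and relative to the other, and isolation of all $r$-dependence in classes on moduli of maps to the root gerbe $\sqrt[r]{L/\D}$ with $L=N_{\D/\X}$; your closing discussion of how the $\mu_r$-isotropy interacts with the generic isotropy of $\X$ is also the same point the paper makes (a central $\mu_r$-extension of the stabilizers, split since $r$ is coprime to their orders).

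There is, however, a genuine gap in the $g>0$ part. Polynomiality of the ``gerbe factors'' alone does not give the statement that the $r^0$-coefficient equals the relative class: for that you must show that, in the non-equivariant limit, every \emph{stable} vertex over the rooted divisor contributes only strictly positive powers of $r$, so that it drops out of the constant term. The paper obtains this from the refined Proposition \ref{prop:poly}: the normalized class $r^{2i-2g(v)+1}\tau'_*\bigl(c_i(-R^\bullet\pi_*\mathcal{L})\cap[\overline{\mathcal{M}}_{g(v),n(v)}(\sqrt[r]{L/\D},\beta(v))]^{\on{vir}}\bigr)$ contains no negative powers of $r$. This is not a consequence of ``Bernoulli-polynomial evaluation of Hurwitz--Hodge integrals''; it requires Toen/Tseng Grothendieck--Riemann--Roch written out for the stacky base $\D$, the virtual degree $r^{2g-1}$ of $\tau'$ from \cite{TT}, Pixton's polynomiality \cite{JPPZ1} for the sum over mod-$r$ twisting data, and an explicit count of the lowest $r$-power (which comes out exactly $r^0$). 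Once this is in place, the localization expression shows that the $t^0$-part of a stable vertex necessarily carries a factor $(tr)^{g(v)-i}$ with $g(v)-i\geq 1$, hence a positive power of $r$, and only then does ``extracting the $r^0$-coefficient and comparing with the degeneration formula'' yield $(\tau_{\on{rel}})_*[\bM_{g,\vec k,n}(\X/\D,\beta)]^{\on{vir}}$ --- as written, your argument presupposes exactly the vanishing it needs to prove. (A smaller point: the genus-$0$ vanishing is not really ``rigidity''; in the paper it follows because the local contributions contain only negative powers of the equivariant parameter, as in \cite[Lemma 4.8]{FWY19}.)
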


\begin{thm}\label{thm:poly-neg}
For $m_->0$ and $r$ sufficiently large, after multiplying by $r^{m_-}$, genus $0$ Gromov-Witten invariant of $\X_{\D,r}$ is independent of $r$. After multiplying by $r^{m_-}$, genus $g>0$ Gromov-Witten invariant of $\X_{\D, r}$ is a polynomial in $r$. More precisely, 
\[
r^{m_-}(\tau_{\on{orb}})_*\left[\bM_{g,\vec k,n}(\X_{\D,r},\beta)\right]^{\on{vir}}
\]
is a polynomial in $r$ and 
\[
r^{m_-}(\tau_{\on{orb}})_*\left[\bM_{0,\vec k,n}(\X_{\D,r},\beta)\right]^{\on{vir}}
\]
is independent of $r$.
\end{thm}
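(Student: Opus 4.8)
The plan is to deduce Theorem~\ref{thm:poly-neg} from Theorem~\ref{thm:poly} by reducing the case of negative contact orders to the case of positive contact orders, following the strategy of \cite{FWY} and \cite{FWY19} but carried out in the Deligne-Mumford target setting. The starting point is that an orbifold marking with $age(N_{\D/\X})$ equal to $1 + k_j/r$ (with $k_j<0$) sits, for $r\gg 1$, on an irreducible component of the domain curve that is forced to map into $\D$. So I would first analyze the boundary stratum of $\bM_{g,\vec k,n}(\X_{\D,r},\beta)$ where the negative markings are distributed onto ``rubber'' or $\D$-mapping components, and express the virtual class of $\bM_{g,\vec k,n}(\X_{\D,r},\beta)$ in terms of virtual classes of moduli spaces in which all the surviving contact orders at the node separating the main component from the $\D$-components are \emph{positive} (the contact order at such a node, read from the main-component side, is $r - (\text{something}) \equiv$ a positive integer once we pass to the large-$r$ regime, while on the $\D$-side it becomes an ordinary orbifold insertion). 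This is the orbifold analogue of the decomposition of moduli of relative stable maps with negative contact along the divisor into a main piece glued to pieces living over $\D$.

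Concretely, the key steps in order are: (i) identify the virtual localization / degeneration-type formula expressing $(\tau_{\on{orb}})_*[\bM_{g,\vec k,n}(\X_{\D,r},\beta)]^{\on{vir}}$ as a sum over splittings $\beta = \beta_0 + \beta'$ and over ways of routing the $m_-$ negative markings, of products of (a) a class pushed from a moduli space $\bM_{g_0,\vec k_+, \ldots}(\X_{\D,r},\beta_0)^{\on{vir}}$ with \emph{only positive} contact orders along $\D$, and (b) classes pushed from moduli spaces of stable maps to $\D$ (or to projective bundles/rubber over $\D$) which do not involve $r$ at all, except through the gluing/age data; (ii) apply Theorem~\ref{thm:poly} to the factor (a), concluding it is a polynomial in $r$ with the stated constant term; (iii) track the powers of $r$ coming from the gluing — each negative marking contributes, through the automorphisms of the gluing node and the normalization sequence for the orbifold normal bundle of $\D$ in $\X_{\D,r}$, exactly one factor of $r^{-1}$ (this is the source of the overall $r^{-m_-}$, equivalently the need to multiply by $r^{m_-}$); (iv) assemble: $r^{m_-}$ times the original class becomes a finite sum of (polynomial in $r$) $\times$ ($r$-independent), hence a polynomial in $r$, and in genus $0$ every factor of type (a) is $r$-independent by Theorem~\ref{thm:poly}, so the product is $r$-independent.

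The bookkeeping I expect to be routine once set up: the multiplicities at the distinguished node, the comparison of obstruction theories under the forgetful maps $\tau_{\on{orb}}$, and the compatibility of these with $\times_{(\underline{\mathcal I}\X)^m}(\underline{\mathcal I}\D)^m$ are all formal consequences of functoriality of virtual classes, the orbifold Riemann-Roch computation of the age shift $1 + k_j/r$, and the behaviour of the normal bundle $N_{\D/\X_{\D,r}} = N_{\D/\X}^{1/r}$ under root constructions — none of which is sensitive to $\X$ being a stack rather than a variety, since we have assumed $r$ coprime to all stabilizer orders.

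The hard part will be step (iii): pinning down that the net power of $r$ attached to each negative contact marking is exactly $-1$, uniformly over all boundary configurations and in all genera. In \cite{FWY19} this rests on a careful analysis of the localization graph contributions from the rubber over $\D$ and a cancellation among the fixed loci; transporting it here requires re-deriving the relevant orbifold Atiyah-Bott fixed-point contributions for $\X_{\D,r}$, checking that the $r$-dependence factors cleanly through the Euler class of the (root-twisted) normal bundle and the edge/node smoothing parameters. A secondary subtlety is ensuring that, in genus $0$, there are no extra $r$-dependent terms hiding in lower strata: this should follow from the genus-$0$ statement of Theorem~\ref{thm:poly} applied recursively to each main-component factor, but one must check that the recursion terminates and that no stratum of type (a) silently carries positive $r$-degree. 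Once these points are secured, Theorem~\ref{thm:poly-neg} follows by the assembly in step (iv).
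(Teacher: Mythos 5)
Your overall frame --- degeneration to the local model plus localization, following \cite{FWY} and \cite{FWY19} --- points in the same direction as the paper, but the proposal as written has a genuine gap at its center: you treat the divisor-side factors as $r$-independent, and that is exactly where all the $r$-dependence (and all the work) lives. After the degeneration formula, the factor carrying the pair $(\X,\D)$ is trivially independent of $r$; the $r$-dependence is concentrated in the localization vertex contributions over $\D_0$, which are pushforwards from moduli of stable maps to the root gerbe $\sqrt[r]{L/\D}$ capped with Chern classes $c_i(-R^\bullet\pi_*\mathcal L)$ of the universal $r$-th root. These are not ``classes of maps to $\D$ not involving $r$'': their polynomiality in $r$ after the precise normalization $r^{i-g(v)+1}$, and their constancy in $r$ in genus zero, is the content of Proposition \ref{prop-poly-neg}, which the paper proves by bootstrapping the polynomiality established in the proof of Theorem \ref{thm:poly} through families over a base, the Edidin--Graham presentation of equivariant Chow groups (equivariant theory as a limit of non-equivariant theory), and then extracting localization residues; behind this sit Toen's GRR and Pixton's polynomiality, which control the sum over $r$-twistings. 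None of that is obtained by applying Theorem \ref{thm:poly} to a ``main component with only positive contact orders,'' so your step (ii) never reaches the classes that actually vary with $r$, and in genus $0$ your step (iv) has no justification for the $r$-independence of the $\D_0$-vertex terms (in higher genus these terms are genuinely polynomial, not constant, in $r$, which is why the theorem only asserts polynomiality).

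A secondary misidentification: the factor $r^{-1}$ per negative marking does not come from gluing-node automorphisms or a normalization sequence for the normal bundle. In the localization formula (\ref{eqn:r_dep}) adapted to large-age markings, the power of $(t/r)$ multiplying $c_i(-R^\bullet\pi_*\mathcal L)$ becomes $g(v)-1+|E(v)|-i+m_-(v)$: each marking of age $1+k_j/r$ shifts the rank of $R^\bullet\pi_*\mathcal L$ by one (orbifold Riemann--Roch), and this rank shift inside the equivariant Euler class of the virtual normal bundle is the source of the overall $r^{-m_-(v)}$, hence of the normalization $r^{m_-}$ in the statement. With that mechanism identified and with Proposition \ref{prop-poly-neg} supplied, your assembly step does go through and essentially reproduces the paper's proof; but as proposed, the argument is missing its key lemma.
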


\begin{rem}
The degree of this polynomial can be studied using the method of \cite{TY20}. One can show that the degree of this polynomial is bounded by $2g-1$ for $g\geq 1$. Since we do not use such a result, we leave the proof to the interested readers.
\end{rem}

\begin{rem}
Theorem \ref{thm:poly-neg} generalizes the main result of \cite{FWY} and \cite{FWY19} to the orbifold case, namely $\X$ is a Deligne-Mumford stack instead of a variety. Therefore, we can also define relative Gromov-Witten theory of $(\X,\D)$ with negative contact orders as a limit of orbifold Gromov-Witten theory of $\X_{\D,r}$. Similar to \cite{FWY} and \cite{FWY19}, with some extra work, we can define relative Gromov-Witten theory of $(\X,\D)$ with negative contact orders purely in terms of relative Gromov-Witten theory of $(\X,\D)$ with positive contact orders and rubber theory of $\D$. 
\end{rem}

\begin{rem}
There are some immediate applications of Theorem \ref{thm:poly} and Theorem \ref{thm:poly-neg}. First of all, the genus zero case has been used in \cite{You19} to compute genus zero relative invariants of certain compactifications of toric Calabi-Yau orbifolds which coincide with some genus zero open invariants of toric Calabi-Yau orbifolds. These invariants are precisely instanton corrections of the mirror of toric Calabi-Yau orbifolds. Moreover, a sketch of the proof of Theorem \ref{thm:poly} is given in \cite[Appendix A]{You19}. Secondly, it has been used to deduce the gerbe duality for relative Gromov-Witten theory from absolute Gromov-Witten theory, see \cite{TT19}.
\end{rem}

\subsection{Proof of Theorem \ref{thm:poly}}
Following the strategy of \cite{TY}, to analyze the $r$-dependence of Gromov-Witten invariants of $\X_{\D,r}$, we use the degeneration formula to reduce to a local model. We also refer to \cite[Section 4.2]{FWY19} for some details.

\subsubsection{Degeneration}
Let $$p:\mathfrak{X}\to \mathbb{A}^1$$ be the deformation to the normal cone of $\D\subset \X$. The special fiber $p^{-1}(0)$ is $\X$ and $$\Y:=\mathbb{P}(N_{\D/\X}\oplus\mathcal{O}_\X)$$ glued together by identifying $\D\subset \X$ with 
$$\D_\infty:=\mathbb{P}(N_{\D/\X}) \subset \mathbb{P}(N_{\D/\X}\oplus\mathcal{O}_\X).$$
Other fibers $p^{-1}(t\neq 0)$ are isomorphic to $\X$. There is a divisor $$\mathfrak{D}\subset \mathfrak{X}$$ whose restriction to $p^{-1}(t\neq 0)$ is $\D$ and whose restriction to $p^{-1}(0)$ is 
$$\D_0:= \mathbb{P}(\mathcal{O}_\X)\subset  \mathbb{P}(N_{\D/\X}\oplus\mathcal{O}_\X).$$
The $r$-th root stack of $\mathfrak{X}$ along $\mathfrak{D}$, $$\mathfrak{X}_{\mathfrak{D}, r},$$ is a flat degeneration of $\X_{\D, r}$ to $$\X\cup_{\D=\D_\infty}\mathbb{P}(N_{\D/\X}\oplus \mathcal{O}_\X)_{\D_0, r}.$$

The degeneration formula for orbifold Gromov-Witten theory \cite{AF} applied to $\mathfrak{X}_{\mathfrak{D}, r}$ expresses Gromov-Witten invariants of $\X_{\D, r}$ in terms of (disconnected) relative Gromov-Witten invariants of $(\X, \D)$ and $(\mathbb{P}(N_{\D/\X}\oplus \mathcal{O}_\X)_{\D_0, r}, \D_\infty)$. The sum in the degeneration formula ranges over the intersection profile along $\D$. Since $(\X, \D)$ is independent of $r$, the $r$-dependence must come from orbifold-relative Gromov-Witten invariants of $(\Y_{\D_0, r}=\mathbb{P}(N_{\D/\X}\oplus \mathcal{O}_\X)_{\D_0, r}, \D_\infty)$. Therefore, we just need to compute
\begin{align}\label{push-forward-cycle}
(\tau^\prime)_*\left[\bM_{g, \vec k,n,\vec \mu}(\Y_{\D_0,r}/\D_\infty,\beta)\right]^{\on{vir}},
\end{align}
where  $\vec \mu\in (\mathbb Z_{>0})^{|\vec \mu|}$ records contact orders at $\D_\infty$ and $\tau^\prime$ is the forgetful map
\[
\tau^\prime: \bM_{g, \vec k,n,\vec \mu}(\Y_{\D_0,r}/\D_\infty,\beta) \rightarrow \bM_{g, m+n+|\vec \mu|}(\D,\beta).
\]

\subsubsection{Localization}\label{sec:localization}
The orbifold-relative Gromov-Witten theory of $(\Y_{\D_0, r}, \D_\infty)$ may be studied using virtual localization with respect to the $\mathbb{C}^*$-action that scales the fibers of $\Y_{\D_0,r}\to \D$.

When $\D$ is a scheme and $r$ is sufficiently large, the virtual localization formula has been written in detail in \cite{JPPZ2} and \cite{TY}. In the present case the formula is completely analogous. We write $\sqrt[r]{L/\D}$ for the $r$-th root of the line bundle $L$ over $\D$. Recall that $\sqrt[r]{L/\D}$ is a gerbe over $\D$ banded by $\mu_r$. The virtual localization formula expresses (\ref{push-forward-cycle}) as a sum over decorated graphs. For the purpose of analyzing the $r$-dependence, we only need to note that $r$ only appears  in the contribution from stable vertices $v$ over $\D_0$, given by the following expression capping with the virtual class $[\overline{\mathcal{M}}_{g(v),n(v)}(\sqrt[r]{L/\D}, \beta(v))]^{\on{vir}}$:
\begin{align}\label{eqn:r_dep}
&\left(\prod_{e\in E(v)}\frac{|G_{(e,v)}|}{r_{(e,v)}}\frac{r_{(e,v)}d_e}{t+\on{ev}_{e}^*c_1(L)-d_e\bar{\psi}_{(e,v)}}\right)\cdot\left(\sum_{i=0}^{\infty}(t/r)^{g(v)-1+|E(v)|-i}c_i(-R^\bullet\pi_*\mathcal L)\right)\\
\notag =& t^{-1}\left(\prod_{e\in E(v)}\frac{|G_{(e,v)}'|}{1}\frac{d_e}{1+(\on{ev}_{e}^*c_1(L)-d_e\bar{\psi}_{(e,v)})/t}\right)\cdot\left(\sum_{i=0}^{\infty}t^{g(v)-i}(r)^{i-g(v)+1}c_i(-R^\bullet\pi_*\mathcal L)\right)\\
\notag=& t^{-1}\left(\prod_{e\in E(v)}\frac{|G_{(e,v)}'|}{1}\frac{d_e}{1+(\on{ev}_{e}^*c_1(L)-d_e\bar{\psi}_{(e,v)})/t}\right)\cdot\left(\sum_{i=0}^{\infty}(tr)^{g(v)-i}(r)^{2i-2g(v)+1}c_i(-R^\bullet\pi_*\mathcal L)\right),
\end{align}
where 
\begin{itemize}
    \item $g(v)$ is the genus of the vertex $v$ over $\D_0$ in a localization graph,
    
    \item $n(v)$ is the number of marked points of the vertex $v$,
    
    \item $\beta(v)$ is the degree assigned to the vertex $v$,
    
    \item $t$ is the equivariant parameter,
    
    \item $L=N_{\D/\X}$,
    \item \[
\pi: \mathcal C_{g(v),n(v)}(\sqrt[r]{L/\D}, \beta(v))\rightarrow \overline{\mathcal{M}}_{g(v),n(v)}(\sqrt[r]{L/\D}, \beta(v))
\]
 is the universal curve, 
\[
\mathcal L\rightarrow \mathcal C_{g(v),n(v)}(\sqrt[r]{L/\D}, \beta(v))
\] 
is the universal $r$-th root,
    
    \item $d_e$ is the degree of the edge $e\in E(v)$,
    \item $\text{ev}_e$ is the evaluation map at the node corresponding to $e$,
    \item $\bar{\psi}_{(e,v)}$ is the descendant class at the marked point corresponding to the pair $(e,v)$,
    
    \item  $G_{(e,v)}$ 
    is the stabilizer group associated to the vertex $v$ and the edge $e$. The group $G_{(e,v)}$ is a $\mu_r$ extension of $G_{(e,v)}'$, so $$|G_{(e,v)}|=r|G_{(e,v)}'|.$$ The group $G_{(e,v)}'$
    is independent of $r$.
    \item $r_{(e,v)}$ is the order of the orbifold structure at the node indexed by $(e,v)$.

\end{itemize}

Moreover, if the target expands over $\D_\infty$, the vertex contribution over $\D_\infty$ is
\begin{align}\label{contri-infinity}
\left(\prod_{e\in E(v)}\frac{|G_{(e,v)}|}{r_{(e,v)}}\right)\frac{\prod_{e\in E(\Gamma)}d_e r_{(e,v)}}{t+\psi_\infty},
\end{align}
which 
always contribute to negative powers of $t$. The edge contribution is trivial when $r$ is sufficiently large.

To obtain genus $g$ Gromov-Witten invariants of $(\Y_{\D_0, r}, \D_\infty)$, we must take the non-equivariant limit, i.e. taking the $t^0$ coefficient in the localization formula. 

If the genus $g=0$, then $g(v)=0$ and we note that (\ref{eqn:r_dep}) and (\ref{contri-infinity}) only contain negative powers of $t$. It follows by the arguments of \cite[Lemma 4.8]{FWY19} that the $t^0$ coefficient is $0$ unless $\bM_{0, \vec k,n,\vec \mu}(\Y_{\D_0,r}/\D_\infty,\beta)$ is unstable (genus zero, two markings and curve class zero). Then the degeneration formula simplifies to
\[
(\tau_{\on{orb}})_*\left[\bM_{0,\vec k,n}(\X_{\D,r},\beta)\right]^{\on{vir}}=(\tau_{\on{rel}})_*\left[\bM_{0,\vec k,n}(\X/\D,\beta)\right]^{\on{vir}}.
\]

Now we assume $g>0$. 
\begin{prop}\label{prop:poly}
For $r$ sufficiently large and $i\geq 0$, the class $$r^{2i-2g(v)+1}\tau^\prime_*(c_i(-R^*\pi_*\mathcal{L})\cap [\overline{\mathcal{M}}_{g(v),n(v)}(\sqrt[r]{L/\D}, \beta(v))]^{\on{vir}})$$ is a polynomial in $r$. Here $\tau^\prime: \overline{\mathcal{M}}_{g(v),n(v)}(\sqrt[r]{L/\D}, \beta(v))\to \overline{\mathcal{M}}_{g(v),n(v)}(\D, \beta(v))$ is the natural map to the moduli space of stable maps to $\D$.
\end{prop}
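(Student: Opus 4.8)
The strategy is to reduce the statement to a known polynomiality result about $\psi$-classes and root-line-bundle Chern classes on the moduli space of stable maps to $\D$, exactly in the style of \cite{TY} (and \cite{JPPZ2}). The moduli space $\overline{\mathcal{M}}_{g(v),n(v)}(\sqrt[r]{L/\D},\beta(v))$ is a gerbe-like space over $\overline{\mathcal{M}}_{g(v),n(v)}(\D,\beta(v))$: the universal root bundle $\mathcal L$ is an $r$-th root of the pullback of $L=N_{\D/\X}$ twisted by the orbifold markings, and the map $\tau'$ is essentially the map remembering only the image curve in $\D$ together with its markings. The first step is therefore to express $\tau'_*\big(c_i(-R^\bullet\pi_*\mathcal L)\cap[\overline{\mathcal{M}}_{g(v),n(v)}(\sqrt[r]{L/\D},\beta(v))]^{\on{vir}}\big)$ in terms of tautological classes on $\overline{\mathcal{M}}_{g(v),n(v)}(\D,\beta(v))$ whose $r$-dependence is explicit.

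\textbf{Key steps.} First I would invoke the analysis of $R^\bullet\pi_*\mathcal L$ for root-of-a-line-bundle fibrations, which is exactly the subject of the Chiodo-type formula used in \cite{JPPZ2} and adapted in \cite{TY}: for $r$ large, $-R^\bullet\pi_*\mathcal L$ has a virtual rank that is affine-linear in $r$ (the $h^0$ vanishes, the $h^1$ grows like $\tfrac{1}{r}$ times something plus lower order by Riemann-Roch for the orbifold curve), and its Chern character can be written via Grothendieck-Riemann-Roch as a sum of tautological classes on $\overline{\mathcal{M}}_{g(v),n(v)}(\D,\beta(v))$ with coefficients that are polynomials in $r$ (after clearing the appropriate power of $r$). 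Second, I would push forward along $\tau'$: the pushforward of the virtual fundamental class of the root-stack moduli space is $\tfrac{1}{r}$ (or a fixed power of $r$ depending on orbifold markings) times the virtual class downstairs, because $\tau'$ is a $\mu_r$-gerbe composed with a finite map of fixed degree. Combining, $\tau'_*(c_i(-R^\bullet\pi_*\mathcal L)\cap[\cdots]^{\on{vir}})$ becomes (power of $r$) times (polynomial in $r$ with tautological-class coefficients), and then multiplying by $r^{2i-2g(v)+1}$ and checking the exponents balance — using $g(v)\geq 1$ so $2i-2g(v)+1$ compensates the negative powers coming from $c_i$ of a bundle whose rank is $O(1/r)$-ish — yields a genuine polynomial in $r$.

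\textbf{The main obstacle.} The delicate point is bookkeeping the exact power of $r$: one must verify that all negative powers of $r$ arising from (i) the $\tfrac{1}{r}$ in the gerbe pushforward, (ii) the Chern classes $c_i(-R^\bullet\pi_*\mathcal L)$ of a complex of virtual rank roughly $g(v)-1$ whose GRR expansion carries denominators $r$, and (iii) the orbifold structure at markings, are exactly cancelled by the prefactor $r^{2i-2g(v)+1}$, leaving no negative powers. This is precisely the computation that makes \eqref{eqn:r_dep} work, and it is where the hypothesis ``$r$ sufficiently large'' is essential (so that no marking or node forces small-$r$ exceptional behavior, and the Bernoulli-polynomial coefficients in Chiodo's formula stabilize). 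I expect this exponent-tracking — matching the $r$-grading in Chiodo's formula term by term against $2i-2g(v)+1$ — to be the crux; the rest is a formal consequence of GRR and the structure of the root-stack gerbe. Once polynomiality of this building block is established, Theorem~\ref{thm:poly} for $g>0$ follows by feeding it back into the degeneration and localization formulas, and the genus-$0$ independence statement follows from the vanishing already observed for the $t^0$-coefficient.
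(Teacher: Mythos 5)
Your overall strategy (Chiodo/Toen GRR for $R^\bullet\pi_*\mathcal L$, pushforward to $\overline{\mathcal{M}}_{g(v),n(v)}(\D,\beta(v))$, then exponent bookkeeping against the prefactor $r^{2i-2g(v)+1}$) matches the paper's, but there is a genuine gap at the heart of the argument: you assert that GRR expresses the Chern character ``as a sum of tautological classes on $\overline{\mathcal{M}}_{g(v),n(v)}(\D,\beta(v))$ with coefficients that are polynomials in $r$,'' so that polynomiality is ``a formal consequence of GRR and the structure of the root-stack gerbe.'' That is not true as stated. After pushing forward by $\tau'$ one must decompose over $\D$-valued stable graphs $\Gamma$ with inertia decorations $\chi$, and for each $(\Gamma,\chi)$ sum over all $r$-twistings $w(h)\in\{0,\dots,r-1\}$ at half-edges (the choices of orbifold structure/root at the nodes). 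The number of admissible twistings grows with $r$ (roughly like $r^{h^1(\Gamma)}$), and each summand involves Bernoulli polynomials evaluated at ages of the form $(\mathrm{age}_{\chi(h)}L+w(h))/r$, i.e.\ it is a polynomial in the $w$'s, not in $r$. The statement that such a sum over residues mod $r$ is, for $r$ large, a polynomial in $r$, together with the identification of its lowest-order term $r^{h^1(\Gamma)-2i}$, is exactly Pixton's polynomiality (\cite[Appendix A]{JPPZ1} and \cite[Proposition 5]{JPPZ1}); this is the crux of the paper's proof of Proposition \ref{prop:poly} and it is absent from your proposal. Without it, ``checking the exponents balance'' has nothing to balance against.

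A second, smaller inaccuracy feeds into the same bookkeeping: the pushforward $\tau'_*$ is not ``$\tfrac1r$ times, or a fixed power of $r$ depending on orbifold markings''; its virtual degree is $r^{2g-1}$ on genus-$g$ stable map moduli (this is the lifting count of \cite{TT}, reflected in the factor $r^{2g(v)-1-h^1(\Gamma)}$ of (\ref{eqn:chern_class}) once one fixes a graph). The final cancellation is precisely $r^{h^1(\Gamma)-2i}\cdot r^{2g(v)-1-h^1(\Gamma)}\cdot r^{2i-2g(v)+1}=r^0$, valid for every vertex genus $g(v)\geq 0$ (your restriction to $g(v)\geq 1$ is not needed and not how the exponents actually cancel). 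So the skeleton of your approach is right, but to complete it you must (i) organize the GRR output as a sum over graphs, decorations, and $r$-twistings, (ii) invoke Pixton-type polynomiality in the twisting variables to convert the twisting sum into a polynomial in $r$ with controlled lowest degree, and (iii) use the correct pushforward degree $r^{2g-1}$ in the exponent count.
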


The proof of Proposition \ref{prop:poly} will be given in Section \ref{sec:poly}. Here, we complete the proof of the theorem. The polynomiality follows immediately from Proposition \ref{prop:poly}. By the formula (\ref{eqn:r_dep}) and Proposition \ref{prop:poly}, the $t^0r^0$-coefficient of the localization contribution of $(\tau^\prime)_*\left[\bM_{g, \vec k,n,\vec \mu}(\Y_{\D_0,r}/\D_\infty,\beta)\right]^{\on{vir}}$ is $0$ unless $\bM_{g, \vec k,n,\vec \mu}(\Y_{\D_0,r}/\D_\infty,\beta)$ is unstable. Then $r^0$-coefficient of the degeneration formula simplifies to
\[
\left[(\tau_{\on{orb}})_*\left[\bM_{g,\vec k,n}(\X_{\D,r},\beta)\right]^{\on{vir}}\right]_{r^0}=(\tau_{\on{rel}})_*\left[\bM_{g,\vec k,n}(\X/\D,\beta)\right]^{\on{vir}}.
\]

\subsubsection{Proof of Proposition \ref{prop:poly}}\label{sec:poly}
The Chern character $ch(R^\bullet\pi_*\mathcal{L})$ can be calculated explicitly using Toen's Grothendieck-Riemann-Roch formula, see \cite{Tseng}. 
In general, let $\Z$ be a smooth proper Deligne-Mumford stack over $\mathbb{C}$ with projective coarse moduli space, and let $V$ be a line bundle on $\Z$. Consider the universal family $$\pi: \C\to\overline{\mathcal{M}}_{g,n}(\Z, \beta), f: \C\to \Z.$$ A formula for the Chern character $ch(R^\bullet\pi_*f^*V)\cap[\overline{\mathcal{M}}_{g,n}(\Z, \beta)]^{\on{vir}}$ is calculated in \cite{Tseng}. For simplicity, in what follows we omit the capping with virtual classes in the discussion. With this understood, the formula reads
\begin{equation}\label{eqn:GRR}
\begin{split}
    ch(R^\bullet\pi_*f^*V)=&\pi_*(ch(f^*V)Td^\vee(L_{n+1}))\\
    &-\sum_{i=1}^n\sum_{m\geq 1}\frac{ev_i^*A_m}{m!}\psi_i^{m-1}\\
    &+\frac{1}{2}(\pi\circ\iota)_*\sum_{m\geq 2}\frac{1}{m!} r_{node}^2 ev_{node}^*A_m \frac{\psi_+^{m-1}+(-1)^m\psi_-^{m-1}}{\psi_++\psi_-},  
\end{split}    
\end{equation}
where 
\begin{enumerate}
\item 
$Td$ is the Todd class.

\item 
On the component $\Z_i$ of the inertia stack $I\Z$, $A_m$ is $$B_m(age_{\Z_i}(p_i^*V))ch(p_i^*V)=B_m(age_{\Z_i}(p_i^*V))p_i^*(e^{c_1(V)}).$$ Here $p_i: \Z_i\to \Z$ is the natural projection, and $B_m(x)$ are Bernoulli polynomials defined by $$\frac{te^{tx}}{e^t-1}=\sum_{m\geq 0}\frac{B_m(x)}{m!} t^m.$$

\item $\iota$ is the inclusion of the nodal locus into the universal curve $\mathcal{C}$.

\item 
$r_{node}$ is the order of orbifold structure at the node.

\item 
$ev_{node}$ is the evaluation map at the node.

\item 
$\psi_\pm$ are $\psi$ classes associated to branches of the node.

\end{enumerate}

We want to apply the  formula to the case $$\Z=\sqrt[r]{L/\D}$$ the stack of $r$-th roots of the line bundle $L=N_{\D/\X}$ over $\D$, and $V$ the universal $r$-th root line bundle on $\Z$.

For this purpose, we need to discuss how to choose orbifold structures induced from $\Z$ at marked points and nodes.

If a point $p\in \D$ has stabilizer group $G$, then its inverse image $q\in \Z$ has stabilizer group $G(r)$, which is a cyclic extension of $G$: $$1\to \mu_r\to G(r)\to G\to 1.$$
An orbifold structure at a point mapping to $q$ is a conjugacy class of $G(r)$. If the induced orbifold structure at the point (which maps to $p$) is chosen, then this conjugacy class in $G(r)$ can be identified with an element in $\mu_r$. We refer to \cite[Section 3.2]{TY16} for more details.

For the $j$-th marked point from $\overline{\mathcal{M}}_{g, \vec{k}, n}(\Y, \beta)$, the orbifold structure is chosen so that the age of $V$ at this marked point is $k_j/r$ if $k_j\geq 0$ and $1+k_j/r$ if $k_j<0$. For other marked points, which are formed by splitting nodes in $\mathbb{C}^*$-fixed stable maps, the orbifold structures are determined by the Galois covers attached at these points. For a node, the orbifold structure is chosen by selecting a $$w\in \{0,...,r-1\}$$ such that the age of $V$ at this node is $$(age_{node}L +w)/r.$$ We substitute these ages into (\ref{eqn:GRR}) and write (\ref{eqn:GRR}) as 
\begin{equation}\label{eqn:GRR2}
\begin{split}
    ch(R^\bullet\pi_*f^*V)=&\pi_*(ch(f^*V)Td^\vee(L_{n+1}))\\
    &-\sum_{j=1}^{n(v)}\alpha_j\\
    &+\frac{1}{2}(\pi\circ\iota)_*r_{node}^2\beta_{node},
\end{split}    
\end{equation}
where 
\begin{equation*}
\begin{split}
&\alpha_j:=\sum_{m\geq 1}\frac{ev_j^*A_m}{m!}\psi_j^{m-1}\\
&\beta_{node}:=\sum_{m\geq 2}\frac{1}{m!}  ev_{node}^*A_m \frac{\psi_+^{m-1}+(-1)^m\psi_-^{m-1}}{\psi_++\psi_-},
\end{split}    
\end{equation*}
and $n(v)$ is the number of marked points at the vertex $v$. So 
\begin{equation}\label{eqn:GRR3}
\begin{split}
    ch_m(R^\bullet\pi_*f^*V)=&\pi_*(ch(f^*V)Td^\vee(L_{n+1}))_m\\
    &-\sum_{j=1}^{n(v)}(\alpha_j)_m\\
    &+\frac{1}{2}((\pi\circ\iota)_*r_{node}^2\beta_{node})_m. 
\end{split}    
\end{equation}

Using $$c(-E^\bullet)=\exp(\sum_{m\geq 1}(-1)^m (m-1)!ch_m(E^\bullet)),$$
we obtain a formula for $c(-R^\bullet\pi_*f^*V)\cap [\overline{\mathcal{M}}_{g(v),n(v)}(\sqrt[r]{L/\D}, \beta(v))]^{\on{vir}}$. Using that the pushforward via $\tau'$ has virtual degree $r^{2g-1}$ on genus $g$ stable map moduli, as calculated in \cite{TT}, we can get a formula for $\tau'_*(c(-R^\bullet\pi_*f^*V)\cap [\overline{\mathcal{M}}_{g(v),n(v)}(\sqrt[r]{L/\D}, \beta(v))]^{\on{vir}})$:

\begin{equation}\label{eqn:chern_class}
\begin{split}
\sum_{\overset{\Gamma\in G_{g,n,\beta}(\D)}{\chi\in \Gamma(\D), w\in W_{\Gamma, \chi, r}}}
\frac{r^{2g(v)-1-h^1(\Gamma)}}{|Aut(\Gamma)|}
&(j_{\Gamma, \chi})_*\left\{\prod_{\mathbf{v}\in V(\Gamma)}\exp(\sum_{m\geq 1}(-1)^m (m-1)!\pi_*(ch(f^*V)Td^\vee(L_{n+1}))_m)\right.\\
&\prod_{j=1}^{n(v)} \exp(\sum_{m\geq 1}(-1)^{m-1}(m-1)!(\alpha_j)_m)\\
&\left.\prod_{(h,h')\in E(\Gamma)} \frac{1-\exp(\sum_{m\geq 1}(-1)^m (m-1)! (\beta_{node})_m(\psi_h+\psi_{h'}))}{\psi_h+\psi_{h'}}\right\}\\
&\cap [\overline{\mathcal{M}}_{g(v),n(v)}(\D, \beta(v))]^{\on{vir}}.
\end{split}    
\end{equation}
Here the sum is over the set of $\D$-valued stable graphs denoted by $ G_{g,n,\beta}(\D)$ as in \cite{JPPZ2}; and $\chi\in \Gamma(\D)$ is a map that assigns to each half-edge a component of the inertia stack of $\D$, corresponding to assigning orbifold structures. Note that 
\begin{enumerate}
\item
For $(h, h')\in E(\Gamma)$, $\chi(h)$ and $\chi(h')$ are opposite. 
\item
For $\mathbf{v}\in V(\Gamma)$, we have $\int_{\beta{\mathbf{v}}}c_1(L)-\sum_{h\in H(\mathbf{v})} age_{\chi(h)}L \in \mathbb{Z}$. This is a consequence of Riemann-Roch for orbifold curves.
\end{enumerate}
We have used the equality $|E(\Gamma)|+\sum_{\mathbf{v}\in V(\Gamma)}(2g_\mathbf{v}-1)=2g(v)-1-h^1(\Gamma)$ for the prestable graph $\Gamma$ to get the factor $r^{2g(v)-1-h^1(\Gamma)}$ in the formula.

The map $$j_{\Gamma, \chi}:\overline{\mathcal{M}}_{\Gamma, \chi}\to \overline{\mathcal{M}}_{g(v), n(v)}(\D, \beta(v))$$ is the natural map from the component indexed by $\Gamma$ and $\chi$ into the moduli of stable maps to $\D$. 

Finally $W_{\Gamma, \chi, r}$ is the collection of $r$-twistings, which is the assignment $$h\mapsto w(h)\in \{0,...,r-1\},$$ such that 
\begin{enumerate}
    \item For $j\in L(\Gamma)$, we have $w(j)\equiv k_j-age_{\X_{i_j}}L\,\, mod\,\, r$, so the age of $V$ at marked point $j$ is $k_j/r$ for $k_j\geq 0$ or $1+k_j/r$ for $k_j<0$.
    
    \item For $(h, h')\in E(\Gamma)$, if $age_{\chi(h)}L=0$, then $w(h)+w(h')\equiv 0\,\, mod\,\, r$. If $age_{\chi(h)}L\neq 0$, then $w(h)+w(h')\equiv -1\,\, mod\,\, r$. These conditions ensure that $$(age_{\chi(h)}L+w(h))/r=1-(age_{\chi(h')}L+w(h'))/r.$$
    
    \item For $\mathbf{v}\in V(\Gamma)$, we have $\sum_{h\in H(\mathbf{v})}w(h)\equiv \int_{\beta{\mathbf{v}}}c_1(L)-\sum_{h\in H(\mathbf{v})} age_{\chi(h)}L \,\,mod \,\, r$. This follows from the lifting analysis of \cite{TT}.
\end{enumerate}

Fix $\Gamma$ and $\chi$ in (\ref{eqn:chern_class}). It follows from the description of $A_m$ that the summands in (\ref{eqn:chern_class}) are polynomials in $w\in W_{\Gamma, \chi, r}$. Pixton's polynomiality \cite[Appendix A]{JPPZ1} applies to show that $\tau'_*(c_i(-R^\bullet\pi_*f^*V)\cap [\overline{\mathcal{M}}_{g(v),n(v)}(\sqrt[r]{L/\D}, \beta(v))]^{\on{vir}})$ is a Laurent polynomial in $r$. Following  \cite[Proposition 5]{JPPZ1}, we can identify the lowest $r$ terms.
\begin{enumerate}
    \item 
After the summation over $r$-twistings, the lowest possible power of $r$ is $r^{h^1(\Gamma)-2i}$.

\item
The formula has a factor $r^{2g(v)-1-h^1(\Gamma)}$.
\item 
Finally there is a prefactor $r^{2i-2g(v)+1}$.
\end{enumerate}
Taken together, this shows that the lowest power of $r$ is $r^0$. This completes the proof.


\subsection{Proof of Theorem \ref{thm:poly-neg}}
The proof of Theorem \ref{thm:poly-neg} is similar to the proof of Theorem \ref{thm:poly}, but requires a more refined polynomiality than Proposition \ref{prop:poly}. 

Let $\overline{\M}_{g,\vec a}(\sqrt[r]{L/\D},\beta)$ be the moduli space of orbifold stable maps to $\sqrt[r]{L/\D}$, where $\vec a$ is a vector of ages. Let
\[
\pi: \mathcal C_{g,\vec a}(\sqrt[r]{L/\D},\beta)\rightarrow \overline{\M}_{g,\vec a}(\sqrt[r]{L/\D},\beta)
\]
 be the universal curve, 
\[
\mathcal L\rightarrow \mathcal C_{g,\vec a}(\sqrt[r]{L/\D},\beta)
\] 
is the universal $r$-th root. We consider the forgetful map 
\[
\tau^\prime: \overline{\M}_{g,\vec a}(\sqrt[r]{L/\D},\beta)\rightarrow \overline{\M}_{g,l(\vec a)}(\D,\beta)
\]
that forgets the $r$-th root construction.

\begin{prop}\label{prop-poly-neg}
For $r$ sufficiently large and $i\geq 0$, the class $$r^{i-g(v)+1}\tau^\prime_*(c_i(-R^\bullet\pi_*\mathcal{L})\cap [\overline{\M}_{g,\vec a}(\mathcal D_r,\beta)]^{\on{vir}})$$ is a polynomial in $r$ and it is constant in $r$ when $g(v)=0$, where $\tau^\prime$ is the map to the moduli space of stable maps to $\D$.
\end{prop}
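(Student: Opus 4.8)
The plan is to run the argument in the proof of Proposition~\ref{prop:poly} with essentially no change, the single new ingredient being a more careful count of the powers of $r$ that accounts for the legs of $\overline{\M}_{g,\vec a}(\sqrt[r]{L/\D},\beta)$ whose age is close to $1$, which are precisely the legs destined to carry negative contact orders. As before, the first step is to apply Toen's orbifold Grothendieck--Riemann--Roch formula~(\ref{eqn:GRR}) to $\Z=\sqrt[r]{L/\D}$ and the universal $r$-th root $\mathcal L$, obtaining the decomposition~(\ref{eqn:GRR2})--(\ref{eqn:GRR3}) of $ch_m(R^\bullet\pi_*\mathcal L)$ into its primary part, the leg contributions $\alpha_j$, and the nodal contributions $\beta_{node}$, with the ages now read off from $\vec a$. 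The second step is to exponentiate by means of $c(-E^\bullet)=\exp(\sum_m(-1)^m(m-1)!\,ch_m(E^\bullet))$, use that $\tau'$ has virtual degree $r^{2g-1}$ on genus $g$ stable-map moduli~\cite{TT}, and push forward $c_i(-R^\bullet\pi_*\mathcal L)\cap[\overline{\M}_{g,\vec a}(\sqrt[r]{L/\D},\beta)]^{\on{vir}}$ to $\overline{\M}_{g,l(\vec a)}(\D,\beta)$; this produces the same kind of graph sum as~(\ref{eqn:chern_class}), over $\D$-valued stable graphs $\Gamma$, assignments $\chi$ of inertia components to half-edges, and $r$-twistings $w\in W_{\Gamma,\chi,r}$. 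The only difference with the all-positive case is that a leg $j$ of age $1+a_j/r$ with $a_j<0$ forces $w(j)\equiv a_j-age_{\X_{i_j}}L\pmod r$, so that $w(j)=r+a_j-age_{\X_{i_j}}L$ is of order $r$ rather than $O(1)$; the nontrivial orbifold structure of $\D$ enters only through the cyclic extensions $G(r)$ of stabilizers, hence only through the congruences defining $W_{\Gamma,\chi,r}$, and not through the power counting.

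Since each summand of~(\ref{eqn:chern_class}) is a polynomial in the twisting vector $w$, Pixton's polynomiality~\cite[Appendix~A]{JPPZ1} already gives that the pushforward is a Laurent polynomial in $r$, and the real task is to locate its lowest power. Following~\cite[Proposition~5]{JPPZ1} as in the proof of Proposition~\ref{prop:poly}, I would combine three contributions: (i) the lowest power of $r$ produced by the summation over $w\in W_{\Gamma,\chi,r}$, where now the forced value $w(j)=r+O(1)$ at each age-$\approx 1$ leg inserts extra powers of $r$ into the summand and thereby raises this bound relative to the all-positive case; (ii) the factor $r^{2g(v)-1-h^1(\Gamma)}$ coming from the virtual degree and the edge normalizations; and (iii) a prefactor, which I claim can be taken to be $r^{i-g(v)+1}$ rather than $r^{2i-2g(v)+1}$. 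Combining (i), (ii) and (iii) should show that $r^{i-g(v)+1}\tau'_*(c_i(-R^\bullet\pi_*\mathcal L)\cap[\overline{\M}_{g,\vec a}(\sqrt[r]{L/\D},\beta)]^{\on{vir}})$ contains no negative powers of $r$; and when $g(v)=0$, so that every relevant $\Gamma$ is a tree with $h^1(\Gamma)=0$ and vanishing vertex genera and the estimate in (i) becomes sharp, the lowest and highest powers of $r$ both equal $r^0$, which forces the Laurent polynomial to reduce to a constant.

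The main obstacle is step~(i): carrying out the refined count of the lowest power of $r$ in the presence of the age-$\approx 1$ legs, that is, checking that the improvement produced by the forced twistings $w(j)=r+O(1)$ is precisely enough to allow the prefactor $r^{2i-2g(v)+1}$ of Proposition~\ref{prop:poly} to be replaced by $r^{i-g(v)+1}$, and no more, and that this estimate is an equality in genus $0$. This is the orbifold counterpart of the refined polynomiality of~\cite[Section~4]{FWY19}, and I would establish it by the same term-by-term bookkeeping applied to~(\ref{eqn:chern_class}); the genuinely new features here, namely the orbifold structure of $\D$ and the extensions $G(r)$, are inert for the power count and can be disregarded for this purpose.
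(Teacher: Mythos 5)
Your plan reduces everything to your step (i): a refined count showing that the prefactor $r^{2i-2g(v)+1}$ of Proposition \ref{prop:poly} can be lowered to $r^{i-g(v)+1}$. But the mechanism you offer for this cannot supply it. The required improvement is by $r^{i-g(v)}$, which depends only on $i$ and $g(v)$ and not at all on how many markings have age close to $1$; Proposition \ref{prop-poly-neg} is stated, and is used in the proof of Theorem \ref{thm:poly-neg}, for an arbitrary age vector $\vec a$, including vertices carrying no large-age markings, so no gain tied specifically to the negative-contact legs can account for it. Moreover, the forced twistings $w(j)=r+O(1)$ do not ``insert extra powers of $r$ into the summand'': in (\ref{eqn:chern_class}) a leg enters only through $A_m=B_m(\mathrm{age})\,ch(\mathcal L)$, and the age lies in $[0,1)$ whether it equals $k_j/r$ or $1+k_j/r$, so the Bernoulli values stay bounded as $r\to\infty$ and contribute no positive powers of $r$. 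Running the Pixton-style count of Proposition \ref{prop:poly} with the prefactor $r^{i-g(v)+1}$ therefore leaves a deficit of $r^{i-g(v)}$ whenever $i>g(v)$, and nothing in your argument closes it.

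The genus-$0$ constancy is even further out of reach of term-by-term degree bounds: individual tree contributions to (\ref{eqn:chern_class}) carry positive powers of $r$ through the edge factors $r(\chi(h))$, and the $r^{-1}$ from the virtual degree multiplied by the $r$-independent parts of the Bernoulli and Todd terms produces, graph by graph, powers of $r$ other than the single one allowed by constancy; the statement holds only because of cancellations among the main, leg and nodal terms, i.e.\ across graphs (already for $i=1$, $g=0$ these are of the nature of tautological relations among $\kappa$-, $\psi$- and boundary classes), and your claim that ``the lowest and highest powers of $r$ both equal $r^0$'' does not see them. This is exactly why the paper does not argue this way: it deduces Proposition \ref{prop-poly-neg} from the already-established polynomiality (and genus-$0$ constancy) of $(\tau^\prime)_*\left[\bM_{g,\vec k,n}(\Y_{\D_0,r},\beta)\right]^{\on{vir}}$, upgraded first to a family statement over a base and then to an equivariant one via the Edidin--Graham approximation with $E=\mathbb C^N\setminus\{0\}$ over $\mathbb P^{N-1}$, and finally extracts the localization residue at the fixed locus over $\D_0$, which is precisely the series $\sum_{i}(t/r)^{g-1-i}c_i(-R^\bullet\pi_*\mathcal L)$ capped with the virtual class; separating the powers of $t$ then yields the statement for each $i$. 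If you insist on a direct GRR/Pixton proof you must exhibit the cancellations explicitly, which is a genuinely harder task than the residue argument, so as it stands your proposal has a gap at its central step.
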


The proof of Proposition \ref{prop-poly-neg} is similar to the proof in \cite[Appendix A]{FWY} and \cite[Section 4]{FWY19}. We briefly explain the idea here. First of all, in the proof of Theorem \ref{thm:poly}, we showed that, for sufficiently large $r$, the class $(\tau^\prime)_*\left[\bM_{g, \vec k,n}(\Y_{\D_0,r},\beta)\right]^{\on{vir}}$ is a polynomial in $r$ and it is constant in $r$ when $g=0$. The equivariant version of it is also true by considering equivariant theory as a limit of non-equivariant theory (see, for example \cite[Section 4.3]{FWY19}). Then the proposition follows from taking localization residue.

\begin{proof}[Proof of Proposition \ref{prop-poly-neg}]

Recall that the class $(\tau^\prime)_*\left[\bM_{g, \vec k,n}(\Y_{\D_0,r},\beta)\right]^{\on{vir}}$ is a polynomial in $r$ and it is constant in $r$ when $g=0$.
The first step is to prove it for families over a base.  Let $\pi: E\rightarrow B$ be a smooth morphism between two smooth algebraic varieties. Suppose that $E$ is also a $\mathbb C^*$-torsor over $B$. Let 
\[
\Y_{\D_0,r}\times _{\mathbb C^*}E=(\Y_{\D_0,r}\times E)/\mathbb C^*
\]
with $\mathbb C^*$ acts on both factors. We consider moduli space $\overline{\M}_{g,\vec k,n}(\Y_{\D_0,r}\times_{\mathbb C^*}E,\beta)$ of orbifold stable maps to $\Y_{\D_0,r}\times_{\mathbb C^*}E$, where the curve class $\beta$ is a fiber class (projects to $0$ on $B$). Let $$\left[\overline{\M}_{g,\vec k,n}(\Y_{\D_0,r}\times_{\mathbb C^*}E,\beta)\right]^{\on{vir}_{\pi}}$$ be the virtual cycle relative to the base $B$. Let
\[
\tau^\prime_{E}: \overline{\M}_{g,\vec k,n}(\Y_{\D_0,r}\times_{\mathbb C^*}E,\beta) \rightarrow \overline{\M}_{g,m+n}(\Y\times_{\mathbb C^*}E,\beta) 
\]
be the forgetful map that forgets the $r$-th root construction.
Then
\begin{align}\label{family-cycle}
\left(\tau^\prime_{E}\right)_*\left[\overline{\M}_{g,\vec k,n}(\Y_{\D_0,r}\times_{\mathbb C^*}E,\beta)\right]^{\on{vir}_{\pi}}
\end{align}
is a polynomial in $r$ and is constant in $r$ if $g=0$. The proof is parallel to the proof of Proposition \ref{prop:poly} as explained in \cite[Section 4.2]{FWY19}.

The next step is to prove that the equivariant cycle class
\begin{align}\label{equiv-cycle}
    \tau^\prime_*\left[\overline{\M}_{g,\vec k,n}(\Y_{\D_0,r},\beta)\right]^{\on{vir, eq}}
\end{align}
is a polynomial in $r$ and is constant in $r$ when $g=0$.
We follow the proof of \cite[Section 4.3]{FWY19}. The idea is that equivariant theory can be considered as a limit of non-equivariant theory. By \cite[Section 2.2]{EG}, the $i$-th Chow group of a space $X$ under an algebraic group $G$ can be defined as follows. Let $V$ be an $l$-dimensional representation of $G$ and $U\subset V$ be an equivariant open set where $G$ acts freely and whose complement has codimension more than $\dim X-i$. Then the $i$-th Chow group is defined as
\begin{align}\label{equiv-chow}
A_i^G(X)=A_{i+l-\dim G}((X\times U)/G).
\end{align}
To apply it to our case, we let $G=\mathbb C^*$ and $E=U=\mathbb C^N-\{0\}$, where $N$ is a sufficiently large integer. Then we have that $(X\times E)/\mathbb C^*$ is an $X$-fibration over $B=U/G=\mathbb P^{N-1}$. Note that
\[
\overline{\M}_{g,\vec k,n}(\Y_{\D_0,r}\times_{\mathbb C^*}E,\beta)\cong \left(\overline{\M}_{g,\vec k,n}(\Y_{\D_0,r},\beta)\times E\right)/\mathbb C^*
\]
as moduli spaces. For suitable $N$, (\ref{equiv-cycle}) identifies the equivariant Chow group with a non-equivariant model. Therefore, the equivariant cycle (\ref{equiv-cycle}) is identified with the non-equivariant cycle (\ref{family-cycle}) under (\ref{equiv-chow}). Therefore, the equivariant class (\ref{equiv-cycle}) is also a polynomial in $r$ and is constant in $r$ when $g=0$.

The last step is to consider localization residues of $\overline{\M}_{g,\vec k,n}(\Y_{\D_0,r},\beta)$. We consider the decorated graph with one vertex over $\mathcal D_0$ such that markings and edges are associated with the vector of ages $\vec a$. The localization residue is a polynomial in $r$ and is a constant when $g=0$. Then the cycle
\[
\tau^\prime_*\left(\sum_{i=0}^\infty\left(\frac tr\right)^{g-i-1}c_i(-R^\bullet\pi_*\mathcal L)\cap [\overline{\M}_{g,\vec a}(\mathcal D_r,\beta)]^{\on{vir}}\right),
\]
coming from the localization residue, is a polynomial in $r$ and is constant when $g=0$.
This is the conclusion of \cite[Theorem 4.1]{FWY19} for $\Y$ a smooth Deligne-Mumford stack. As a consequence (see also \cite[Corollary 4.2]{FWY19}), the cycle
\[
\tau^\prime_*\left((r)^{i-g+1}c_i(-R^\bullet\pi_*\mathcal L)\cap [\overline{\M}_{g,\vec a}(\mathcal D_r,\beta)]^{\on{vir}}\right)
\]
is a polynomial in $r$ and is constant when $g=0$. This concludes the proposition.
\end{proof}

\begin{proof}[Proof of Theorem \ref{thm:poly-neg}]
The proof is similar to the proof of Theorem \ref{thm:poly} with the help of Proposition \ref{prop-poly-neg}. The degeneration formula again reduces the proof to local models. The localization computation is similar to the computation in Section \ref{sec:localization} except that the $r$-dependence appears in the following form as the vertex contribution over $\D_0$:

\begin{align*}
&\left(\prod_{e\in E(v)}\frac{|G_{(e,v)}|}{r_{(e,v)}}\frac{r_{(e,v)}d_e}{t+\on{ev}_{e}^*c_1(L)-d_e\bar{\psi}_{(e,v)}}\right)\cdot\left(\sum_{i=0}^{\infty}(t/r)^{g(v)-1+|E(v)|-i+m_-(v)}c_i(-R^\bullet\pi_*\mathcal L)\right)\\
\notag =&\left(\prod_{e\in E(v)}\frac{|G_{(e,v)}'|}{1}\frac{d_e}{1+(\on{ev}_{e}^*c_1(L)-d_e\bar{\psi}_{(e,v)})/t}\right)\cdot\left(\sum_{i=0}^{\infty}t^{g(v)-i+m_-(v)-1}(r)^{i-g(v)+1-m_-(v)}c_i(-R^\bullet\pi_*\mathcal L)\right)\\
\notag=& r^{-m_-(v)}\left(\prod_{e\in E(v)}\frac{|G_{(e,v)}'|}{1}\frac{d_e}{1+(\on{ev}_{e}^*c_1(L)-d_e\bar{\psi}_{(e,v)})/t}\right)\cdot\left(\sum_{i=0}^{\infty}(t)^{g(v)-i+m_-(v)-1}(r)^{i-g(v)+1}c_i(-R^\bullet\pi_*\mathcal L)\right),
\end{align*}
where $m_-(v)$ is the number of large age markings attached to the vertex $v$ over $\D_0$. Multiplying by $r^{m_-}$,  then the polynomiality follows from Proposition \ref{prop-poly-neg}. This completes the proof of Theorem \ref{thm:poly-neg}.
\end{proof}

Theorem \ref{thm:poly-neg} implies that we can define relative Gromov-Witten invariants of an orbifold pair $(\X,\D)$ with negative contact orders as follows.
\begin{defn}
Let $\X$ be a smooth proper Deligne-Mumford stack over $\mathbb{C}$ with projective coarse moduli space. Let $\D\subset \X$ be a smooth irreducible divisor. The virtual cycle for the relative Gromov-Witten theory of the pair $(\X,\D)$  with negative contact orders is defined as follows:
\[
\left[\bM_{g,\vec k,n}(\X/\D,\beta)\right]^{\on{vir}}:=\left[r^{m_-}(\tau_{\on{orb}})_*\left[\bM_{g,\vec k,n}(\X_{\D,r},\beta)\right]^{\on{vir}}\right]_{r^0}\in A_*\left(\bM_{g,m+n}(\X,\beta)\times_{(\underline{\mathcal I}\X)^m} (\underline{\mathcal I}\D)^m\right).
\]
\end{defn}

\section{Gromov-Witten theory of multi-root stacks and its limit}\label{sec:application}
Let $X$ be a smooth projective variety\footnote{The main results of this paper also holds when $X$ is a smooth projective Deligne-Mumford stack. For simplicity, we only consider the case when $X$ is a smooth projective variety.} over $\mathbb{C}$ and let $$D_1, ..., D_n\subset X$$ be smooth irreducible divisors. Suppose $$D:=D_1+...+D_n$$ is simple normal crossing.

\begin{defn}\label{defn-multi-root}
For $\vec r=(r_1,\ldots, r_n)\in \mathbb{N}^n$, the multi-root stack $$X_{D, \vec{r}}:=X_{(D_1, r_1),...,(D_n, r_n)},$$ 
 is the stack whose objects over a scheme $S$ consist of the data 
$$f: S\to X, \{M_i: \text{ line bundle on }S\}, \{s_i\in H^0(M_i)\}, \{\phi_i:M_i^{\otimes r_i}\to f^*\mathcal{O}_X(D_i)\}$$
such that $s_i^{r_i}=\phi_i^*f^*\sigma_i$ for $i=1,...,n$.
\end{defn}

If $r_1,...,r_n$ are pairwise coprime, then $X_{D, \vec{r}}$ is smooth and has a well-defined Gromov-Witten theory.

For each $i=1, ..., n$, we can view $X_{D, \vec{r}}$ as $$(X_{(D_1, r_1),...,\widehat{(D_i, r_i)} ,..., (D_n, r_n)})_{(D_i, r_i)}.$$
Therefore Theorem \ref{thm:poly} applied to $X_{D, \vec{r}}$ implies polynomiality for each $r_i$, hence proves \cite[Conjecture 1.2]{TY20b}:

\begin{cor}\label{cor-limit}
For $r_1, ..., r_n$ sufficiently large, 
genus $0$ Gromov-Witten theory of $X_{D, \vec{r}}$, after multiplying by suitable powers of $r_i$, is independent of $r_1, ..., r_n$. Higher genus Gromov-Witten theory of $X_{D, \vec{r}}$, after multiplying by suitable powers of $r_i$, is a polynomial in $r_1, ..., r_n$.
\end{cor}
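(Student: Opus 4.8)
The plan is to peel off one root divisor at a time and invoke Theorems~\ref{thm:poly} and~\ref{thm:poly-neg}, then assemble the resulting per-variable polynomiality into joint polynomiality. Fix $i$ and set
$$\X_i := X_{(D_1,r_1),\dots,\widehat{(D_i,r_i)},\dots,(D_n,r_n)},$$
so that $X_{D,\vec r}=(\X_i)_{\D_i,r_i}$ is the $r_i$-th root stack of $\X_i$ along $\D_i$, the reduced preimage of $D_i$. Because $D$ is simple normal crossing, $\D_i$ is a smooth irreducible divisor in $\X_i$ (it is itself the multi-root stack of $D_i$ along the $D_i\cap D_j$, $j\neq i$), and $\X_i$ is a smooth proper Deligne-Mumford stack with projective coarse space $X$. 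Any stabilizer order of $\X_i$ divides $\prod_{j\neq i} r_j$, which by pairwise coprimality is prime to $r_i$; this is exactly the hypothesis under which Theorems~\ref{thm:poly} and~\ref{thm:poly-neg} apply to $(\X_i,\D_i)$. Under the identification $X_{D,\vec r}=(\X_i)_{\D_i,r_i}$, a marking carrying contact order $k$ along $D_i$ becomes a relative marking of contact order $k$ along $\D_i$ (positive, negative, or, when $k=0$, an ordinary orbifold marking mapping to $\underline{\mathcal I}\X_i$), the contact data along the remaining $D_j$'s being recorded by the target $\underline{\mathcal I}\X_i$.

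With the parameters $r_j$, $j\neq i$, held fixed, Theorem~\ref{thm:poly} (resp. Theorem~\ref{thm:poly-neg}) then says that $r_i^{m_{-,i}}$ times any fixed genus-$g$ invariant of $X_{D,\vec r}$ --- where $m_{-,i}$ counts the insertions of negative contact order along $D_i$, and the statement is understood, as in Section~\ref{sec:polynomiality}, at the level of virtual classes pushed forward to a moduli space of stable maps to $X$ independent of $\vec r$ --- is a polynomial in $r_i$ for $r_i$ large, and is independent of $r_i$ when $g=0$. Multiplying once and for all by $\prod_{i=1}^n r_i^{m_{-,i}}$, we obtain a function $F(r_1,\dots,r_n)$, defined on large pairwise coprime tuples, that is polynomial in each variable separately. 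In genus $0$ it is moreover constant in each variable separately; since the domain is connected under coordinate-by-coordinate moves (one passes between any two admissible tuples through an intermediate tuple of large distinct primes), $F$ is constant, which is the genus-$0$ assertion.

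For $g>0$ it remains to deduce joint polynomiality of $F$ from separate polynomiality, and this needs a little care on two points. First, the threshold ``$r_i$ sufficiently large'' in Theorems~\ref{thm:poly} and~\ref{thm:poly-neg} must be taken uniform in the other $r_j$'s: inspecting the proofs, it depends only on the fixed discrete data --- genus, number and contact orders of markings, and the curve class $\beta$, which bounds the edge degrees $d_e$ --- so a single $R_0$ works for all admissible $(r_1,\dots,r_n)$. Second, one needs a uniform bound $d$ on the degrees of the one-variable polynomials, available either from the Remark after Theorem~\ref{thm:poly-neg} ($\deg\le 2g-1$) or directly from the Grothendieck-Riemann-Roch/Pixton expansion in the proof of Proposition~\ref{prop:poly}. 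Granting both, $F$ on $\{r_i>R_0,\ \text{pairwise coprime}\}$ is a polynomial of degree $\le d$ along every axis-parallel slice, and an elementary finite-difference argument --- the $(d{+}1)$-st difference of $F$ in the $r_i$-direction with step $\prod_{j\neq i}r_j$ vanishes, and such progressions respect pairwise coprimality while remaining Zariski dense --- shows that $F$ is the restriction of a polynomial in $\mathbb{Q}[r_1,\dots,r_n]$.

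I expect the last paragraph to be the only real obstacle. It is not deep, but the pairwise-coprimality restriction on the domain forbids a verbatim appeal to a standard ``separately polynomial $\Rightarrow$ jointly polynomial'' lemma, so one must either prove the adapted version just sketched or, alternatively, first extend the single-divisor polynomiality of Section~\ref{sec:polynomiality} to root stacks $\X_{\D,r}$ with $r$ not assumed coprime to the stabilizers (working with the possibly singular stack, or a resolution of it). Everything preceding that is a bookkeeping translation followed by a direct citation of Theorems~\ref{thm:poly} and~\ref{thm:poly-neg}.
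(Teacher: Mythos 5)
Your proposal is correct and follows essentially the same route as the paper: the paper's proof of Corollary \ref{cor-limit} is precisely the observation that $X_{D,\vec r}$ is the $r_i$-th root stack of $X_{(D_1,r_1),\dots,\widehat{(D_i,r_i)},\dots,(D_n,r_n)}$ along the (preimage of) $D_i$, so that Theorems \ref{thm:poly} and \ref{thm:poly-neg} apply in each variable $r_i$, the pairwise coprimality supplying exactly the stabilizer-order hypothesis. The only difference is that you make explicit the passage from separate to joint polynomiality on the pairwise-coprime domain (and the genus-zero constancy via coordinate-by-coordinate moves), a step the paper treats as immediate; your uniform-threshold, uniform-degree, finite-difference argument is a sensible way to fill in that point.
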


We may view the $r_1^0...r_n^0$ term of the Gromov-Witten theory of $X_{D, \vec{r}}$ as {\em formally} giving a  Gromov-Witten theory of infinite root stack $X_{D,\infty}$, which provides a virtual count of curves with tangency conditions along a simple normal crossing divisor. This can be viewed as analogous to logarithmic Gromov-Witten theory of the pair $(X,D)$. 

Now, we will state Corollary \ref{cor-limit} more precisely and define the formal Gromov-Witten theory of $X_{D,\infty}$. 
\begin{notation}
We will use ``relative marking'' and ``orbifold marking'' interchangeably. Terms like ``contact order'' and ``tangency condition'' will also be used. In Section \ref{sec:polynomiality}, we treat relative markings and interior markings separately. Here, it is more convenient to treat them all together. Therefore, the notation for the rest of the paper will be slightly different from the notation in Section \ref{sec:polynomiality}. We will use $n$ to denote the number of irreducible components of the divisor $D$ and use $m$ to denote the number of markings (including both relative and interior markings).  
\end{notation}

For any index set $I\subseteq\{1,\ldots, n\}$, we define 
\[
D_I:=\cap_{i\in I} D_i.
\]
Note that $D_I$ can be disconnected. In particular, we set 
\[
D_\emptyset:=X. 
\]

Let 
\[
\vec s=(s_1,\ldots,s_n)\in \mathbb Z^n.
\]
The vector $\vec s$ is used to record contact orders. Note that both positive and negative contact orders are allowed. We define
\[
I_{\vec s}:=\{i:s_i\neq 0\}\subseteq \{1,\ldots,n\}.
\]

Consider the vectors
\[
\vec s^j=(s_1^j,\ldots,s_n^j)\in (\mathbb Z)^n, \text{ for } j=1,2,\ldots,m,
\]
which satisfy the following condition:
\[
\sum_{j=1}^m s_i^j=\int_\beta[D_i], \text{ for } i\in\{1,\ldots,n\}.
\]
For sufficiently large\footnote{By sufficiently large $\vec r$, we mean $r_i$ are sufficiently large for all $i\in\{1,\ldots,n\}$.} $\vec r$, we consider the moduli space $$\bM_{g,\{\vec s^j\}_{j=1}^m}(X_{D,\vec r},\beta)$$ of genus $g$, degree $\beta\in H_2(X)$, $m$-pointed, orbifold stable maps to $X_{D,\vec r}$ with orbifold conditions specified by $\{\vec s^j\}_{j=1}^m$. Note that the $j$-th marking maps to twisted sector $D_{I_{\vec s^j}}$ with age
\[
\sum_{i: s^j_i>0} \frac{s^j_i}{r_i}+\sum_{i: s^j_i<0} \left(1+\frac{s^j_i}{r_i}\right).
\]
There are evaluation maps
\[
\on{ev}_j: \bM_{g,\{\vec s^j\}_{j=1}^m}(X_{D,\vec r},\beta) \rightarrow D_{I_{\vec s^j}}, \text{ for } j\in \{1,\ldots,m\}.
\]

Let \begin{itemize}
    \item $\gamma_j\in H^*(D_{I_{\vec s^j}})$, for $j\in\{1,2,\ldots,m\}$;
    \item $a_j\in \mathbb Z_{\geq 0}$, for $j\in \{1,2,\ldots,m\}$.
\end{itemize}
Gromov-Witten invariants of $X_{D,\vec r}$ are defined as follows
\begin{align*}
    \left\langle \gamma_1\bar{\psi}^{a_1},\ldots, \gamma_m\bar{\psi}^{a_m} \right\rangle_{g,\{\vec s^j\}_{j=1}^m,\beta}^{X_{D,\vec r}}:=
    \int_{\left[\bM_{g,\{\vec s^j\}_{j=1}^m}(X_{D,\vec r},\beta)\right]^{\on{vir}}}\on{ev}_1^*(\gamma_1)\bar{\psi}_1^{a_1}\cdots\on{ev}_m^*(\gamma_m)\bar{\psi}_m^{a_m}.
\end{align*}
We define
\[
s_{i,-}:=\#\{j: s_i^j<0\}, \text{ for } i=1,2,\ldots, n.
\]
Let 
\[
\tau:\bM_{g,\{\vec s^j\}_{j=1}^m}(X_{D,\vec r},\beta) \rightarrow \bM_{g,m}(X,\beta)\times_{X^m}\left(D_{I_{\vec s^1}}\times\cdots \times D_{I_{\vec s^m}}\right).
\]
be the forgetful map.

By Theorem \ref{thm:poly-neg}, the cycle class
\[
\left(\prod_{i=1}^n r_i^{s_{i,-}}\right)\tau_*\left(\left[\bM_{g,\{\vec s^j\}_{j=1}^m}(X_{D,\vec r},\beta)\right]^{\on{vir}}\right)
\]
is a polynomial in $r_i$ when $\vec r$ is sufficiently large.
We denote the constant term of the above polynomial as
\[
\left[\bM_{g,\{\vec s^j\}_{j=1}^m}(X_{D,\infty},\beta)\right]^{\on{vir}}:=\lim_{\vec r\rightarrow \infty}\left[\left(\prod_{i=1}^n r_i^{s_{i,-}}\right)\tau_*\left(\left[\bM_{g,\{\vec s^j\}_{j=1}^m}(X_{D,\vec r},\beta)\right]^{\on{vir}}\right)\right]_{\prod_{i=1}^n r_i^0}.
\]
It is considered as the virtual cycle of the formal Gromov-Witten theory of the infinite root stack $X_{D,\infty}$. 

Recall that there are evaluation maps
\[
\on{ev}_j: \bM_{g,\{\vec s^j\}_{j=1}^m}(X_{D,\vec r},\beta) \rightarrow D_{I_{\vec s^j}},
\]
for $j\in \{1,\ldots,m\}$.
We define the following evaluation maps
\[
\overline{\on{ev}}_j:\bM_{g,m}(X,\beta)\times_{X^m}\left(D_{I_{\vec s^1}}\times\cdots \times D_{I_{\vec s^m}}\right)\rightarrow D_{I_{\vec s^j}},
\]
such that
\[
\overline{\on{ev}}_j\circ \tau=\on{ev}_j,
\]
for $j\in \{1,\ldots,m\}$.

The formal Gromov-Witten invariants of $X_{D,\infty}$ can be defined as follows.
\begin{defn}\label{defn:GW_inv}
Let \begin{itemize}
    \item $\gamma_j\in H^*(D_{I_{\vec s^j}})$, for $j\in\{1,2,\ldots,m\}$;
    \item $a_j\in \mathbb Z_{\geq 0}$, for $j\in \{1,2,\ldots,m\}$.
\end{itemize}
The formal Gromov-Witten invariants of $X_{D,\infty}$ are defined as
\begin{align*}
    \left\langle [\gamma_1]_{\vec s^1}\bar{\psi}^{a_1},\ldots, [\gamma_m]_{\vec s^m}\bar{\psi}^{a_m} \right\rangle_{g,\{\vec s^j\}_{j=1}^m,\beta}^{X_{D,\infty}}:= \int_{\left[\bM_{g,\{\vec s^j\}_{j=1}^m}(X_{D,\infty},\beta)\right]^{\on{vir}}}\overline{\on{ev}}_1^*(\gamma_1)\bar{\psi}_1^{a_1}\cdots\overline{\on{ev}}_m^*(\gamma_m)\bar{\psi}_m^{a_m}.
\end{align*}
In other words,
\begin{align*}
    \left\langle [\gamma_1]_{\vec s^1}\bar{\psi}^{a_1},\ldots, [\gamma_m]_{\vec s^m}\bar{\psi}^{a_m} \right\rangle_{g,\{\vec s^j\}_{j=1}^m,\beta}^{X_{D,\infty}}:=\left[\left(\prod_{i=1}^n r_i^{s_{i,-}}\right)\left\langle \gamma_1\bar{\psi}^{a_1},\ldots, \gamma_m\bar{\psi}^{a_m} \right\rangle_{g,\{\vec s^j\}_{j=1}^m,\beta}^{X_{D,\vec r}}\right]_{\prod_{i=1}^n r_i^0}
\end{align*}
for sufficiently large $\vec r$. 
\end{defn}

Note that the $\bar{\psi}$-classes are pullback of $\psi$-classes on the moduli space $\bM_{g,m}(X,\beta)$ of stable maps to $X$.

\begin{rem}
When $D$ is irreducible, the formal Gromov-Witten theory of $X_{D,\infty}$ coincides with relative Gromov-Witten theory (possibly with negative contact orders) defined in \cite{FWY} and \cite{FWY19}. Relative Gromov-Witten theory in \cite{FWY} and \cite{FWY19} can also be defined using the usual relative Gromov-Witten theory of J. Li \cite{Li01}, \cite{Li02} and rubber theory of $D$. When $D$ is simple normal crossing, it is also possible to define the formal Gromov-Witten theory of $X_{D,\infty}$ in terms of the usual relative Gromov-Witten theory and rubber theory of $D_i$, but it will be more complicated and the combinatorics will be more involved than \cite{FWY} and \cite{FWY19}.  
\end{rem}

\section{Relative quantum cohomology}\label{sec:rel_QC}
In this section, we introduce quantum cohomology for $X_{D,\infty}$. We will call it \emph{relative quantum cohomology} of $(X,D)$ because we consider the formal Gromov-Witten theory of $X_{D,\infty}$ as a Gromov-Witten theory of $X$ relative to the simple normal crossing divisor $D$.

\subsection{The state space}\label{sec:state-space}
We briefly described the state space for the formal Gromov-Witten theory of infinite root stacks in \cite[Section 4]{TY20b}. In this section, we will provide more detailed discussion of it and its ring structure.

Following the description in \cite[Section 7.1]{FWY}, we formally define the state space for the Gromov-Witten theory of $X_{D,\infty}$ as the limit of the state space of $X_{D,\vec r}$:
\[
\mathfrak H:=\bigoplus_{\vec s\in \mathbb Z^n}\mathfrak H_{\vec s},
\]
where 
\[
\mathfrak H_{\vec s}:=H^*(D_{I_{\vec s}}).
\]
Note that 
\begin{itemize}
    \item 
$\mathfrak H_{\vec 0}:=H^*(D_\emptyset):=H^*(X)$;
\item if $\cap_{i: s_i\neq 0} D_i=\emptyset$, then $\mathfrak H_{\vec s}=0$.
\end{itemize} 
Each $\mathfrak H_{\vec s}$ naturally embeds into $\mathfrak H$. For an element $\gamma\in \mathfrak H_{\vec s}$, we write $[\gamma]_{\vec s}$ for its image in $\mathfrak H$. The pairing on $\mathfrak H$ 
\[
(-,-):\mathfrak H \times \mathfrak H\rightarrow \mathbb C
\]
is defined as follows: for $[\alpha]_{\vec s}$ and $[\beta]_{\vec s^\prime}$, define
\begin{equation}\label{eqn:pairing}
\begin{split}
([\alpha]_{\vec s},[\beta]_{\vec s^\prime}) = 
\begin{cases}
\int_{D_{I_{\vec s}}} \alpha\cup\beta, &\text{if } \vec s=-\vec s^\prime;\\
0, &\text{otherwise. }
\end{cases}
\end{split}
\end{equation}
The pairing on the rest of the classes is generated by linearity. Recall that $D_\emptyset=X$, therefore
\[
([\alpha]_{\vec s},[\beta]_{\vec s^\prime}) =\int_X \alpha\cup\beta, \text{ if } \vec s=-\vec s^\prime=\vec 0.
\]
We choose a basis $\{T_{I,k}\}_k$ for $H^*(D_I)$. When $I=\emptyset$, we can also simply write $\{T_k\}_k$ for a basis for $H^*(X)$. Then we can define a basis of $\mathfrak H$ as follows:
\[
\tilde{T}_{\vec s,k}=[T_{I_{\vec s},k}]_{\vec s}.
\]
Let $\{T_{I}^k\}$ be the dual basis of $\{T_{I,k}\}$ under the Poincar\'e pairing of $H^*( D_I)$. Define
\[
\tilde{T}_{\vec s}^k=[T_{I_{\vec s}}^k]_{\vec s}.
\]
Then $\{\tilde T_{\vec s}^k\}$ form a dual basis of $\{\tilde T_{\vec s, k}\}$ under the pairing of $\mathfrak H$. Note that the dual of $\tilde T_{\vec s,k}$ is $\tilde T_{-\vec s}^k$ under the pairing of $\mathfrak H$.

\begin{defn}
For $[\alpha],[\beta]\in \mathfrak H$, the product $[\alpha]\cdot[\beta]$ is defined as follows: for $[\gamma]\in \mathfrak H$,
\[
([\alpha]\cdot[\beta],[\gamma]):=\langle [\alpha],[\beta],[\gamma]\rangle_{0,3,0}^{X_{D,\infty}},
\]
where the right-hand side is the genus zero, degree zero invariant of $X_{D,\infty}$ with three marked points.
\end{defn}

Similar to \cite{FWY}, the product structure can be written down explicitly, by computing the genus zero, degree zero $3$-pointed invariants.

Note that the ring $\mathfrak H$ is multi-graded. There are gradings with respect to contact orders $\vec s$:
\begin{align}\label{deg-i}
\deg^{i}([\alpha]_{\vec s})=s_i.
\end{align}
There is one grading for the cohomological degree of the class. Suppose $\alpha\in \mathfrak H_{\vec s}$ is a cohomology class of real degree $d$. Then we define,
\begin{align}\label{deg-0}
\deg^{0}([\alpha]_{\vec s})=d/2+\#\{i:s_i<0\}.
\end{align}
Note that there is a shift of the degree in (\ref{deg-0}). It already appears in \cite[Section 7.1]{FWY} when $D$ is irreducible. One can simply think about the degree (\ref{deg-0}) as a limit of the orbifold degree (shifted by ages).

Let $[\gamma_j]_{\vec s^j}\in \mathfrak H$ and $a_j\in \mathbb Z_{\geq 0}$, for $j\in\{1,\ldots,m\}$, where
\[
\vec s^j=(s_1^j,\ldots,s_n^j)\in (\mathbb Z)^n.
\]
Recall that the formal Gromov-Witten invariant of $X_{D,\infty}$ is denoted by
\begin{align}\label{inv-X-D}
\left\langle [\gamma_1]_{\vec s^1}\bar{\psi}^{a_1},\ldots, [\gamma_m]_{\vec s^m}\bar{\psi}^{a_m} \right\rangle_{g,\{\vec s^j\}_{j=1}^m,\beta}^{X_{D,\infty}}.
\end{align}
The invariant (\ref{inv-X-D}) is zero unless it satisfies the virtual dimension constraint
\begin{align}\label{vir-dim}
    (1-g)(\dim_{\mathbb C}X-3)+m+\int_\beta c_1(T_X)-\int_\beta [D]=\sum_{j=1}^m \deg^{0}([\gamma_j]_{\vec s^j})+\sum_{j=1}^m a_j.
\end{align}
We will also denote the invariant (\ref{inv-X-D}) by $\langle \cdots\rangle_{g,m,\beta}^{X_{D,\infty}}$ if the contact order information is clear from the insertion. Sometimes, we will abbreviate it to $\langle \cdots\rangle$ for simplicity.

\subsection{Universal equations}\label{Sec:univ_eqns}

Absolute Gromov-Witten invariants are known to satisfy the following universal equations: string equation, divisor equation, dilaton equation, topological recursion relation (TRR), and Witten-Dijkgraaf-Verlinde-Verlinde (WDVV) equation (see, for example, \cite{AGV}, \cite{Tseng} for universal equations for orbifold Gromov-Witten invariants). It was proved in \cite{FWY} that relative Gromov-Witten invariants also satisfy these universal equations. Our definition of the formal Gromov-Witten invariants of infinite root stacks is taken as the limit of orbifold Gromov-Witten invariants of finite root stacks. It is straightforward to show that these universal equations  are preserved under the limit. Therefore, we have the following universal equations for the formal Gromov-Witten invariants of infinite root stacks.

Let $\vec s^0=\vec 0$, we have
\begin{prop}[String equation]
\begin{align}\label{string-equ}
    &\left\langle [1]_{\vec 0},[\gamma_1]_{\vec s^1}\bar{\psi}^{a_1},\ldots, [\gamma_m]_{\vec s^m}\bar{\psi}^{a_m} \right\rangle_{g,\{\vec s^j\}_{j=0}^m,\beta}^{X_{D,\infty}}\\
   \notag =&\sum_{j=1}^m \left\langle [\gamma_1]_{\vec s^1}\bar{\psi}^{a_1},\ldots,[\gamma_j]_{\vec s^j}\bar{\psi}^{a_j-1},\ldots, [\gamma_m]_{\vec s^m}\bar{\psi}^{a_m} \right\rangle_{g,\{\vec s^j\}_{j=1}^m,\beta}^{X_{D,\infty}}.
\end{align}

\end{prop}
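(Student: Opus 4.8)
The plan is to deduce the string equation for the formal invariants of $X_{D,\infty}$ from the string equation for orbifold Gromov-Witten invariants of the finite root stacks $X_{D,\vec r}$, passing to the constant-term limit. First I would recall that for each sufficiently large $\vec r$ the orbifold theory of $X_{D,\vec r}$ is an honest Gromov-Witten theory of a smooth Deligne-Mumford stack, so the usual string equation holds: inserting the fundamental class $[1]_{\vec 0}\in H^*(X)=H^*(D_\emptyset)$ (which, since $\vec s^0=\vec 0$, is an untwisted interior insertion carrying no $\bar\psi$-power and no descendant) and forgetting that marked point gives
\[
\left\langle \gamma_0\bar{\psi}^0,\gamma_1\bar{\psi}^{a_1},\ldots,\gamma_m\bar{\psi}^{a_m}\right\rangle_{g,\{\vec s^j\}_{j=0}^m,\beta}^{X_{D,\vec r}}=\sum_{j=1}^m\left\langle \gamma_1\bar{\psi}^{a_1},\ldots,\gamma_j\bar{\psi}^{a_j-1},\ldots,\gamma_m\bar{\psi}^{a_m}\right\rangle_{g,\{\vec s^j\}_{j=1}^m,\beta}^{X_{D,\vec r}}.
\]
Here one must note that the $\bar\psi$-classes in Definition \ref{defn:GW_inv} are pullbacks of $\psi$-classes from $\bM_{g,m}(X,\beta)$, and the forgetful map on $X_{D,\vec r}$ moduli is compatible with the forgetful map on $X$-moduli, so the comparison of $\bar\psi$-classes under forgetting the string point is exactly the one on $\bM_{g,m+1}(X,\beta)\to\bM_{g,m}(X,\beta)$ — no orbifold corrections enter because the forgotten point is untwisted. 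This is the one genuinely substantive point: to check that forgetting an untwisted, non-descendant marked point on $X_{D,\vec r}$ does not alter the contact data $\{\vec s^j\}_{j=1}^m$ at the remaining points and induces the standard $\bar\psi$ relation.

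Next I would track the normalization factors $\prod_{i=1}^n r_i^{s_{i,-}}$. The quantity $s_{i,-}=\#\{j:s_i^j<0\}$ counts negative contact orders among the markings $j=1,\ldots,m$; since the extra string insertion has $\vec s^0=\vec 0$, it contributes nothing to any $s_{i,-}$, so the same factor $\prod_i r_i^{s_{i,-}}$ appears on both sides of the $X_{D,\vec r}$ string equation. Multiplying the displayed identity through by $\prod_i r_i^{s_{i,-}}$, both sides become polynomials in $\vec r$ for $\vec r$ sufficiently large by Theorem \ref{thm:poly-neg} (equivalently Corollary \ref{cor-limit}). Taking the $\prod_i r_i^0$-coefficient is a linear operation that commutes with the finite sum over $j$, and by Definition \ref{defn:GW_inv} the constant term of the left side is the $X_{D,\infty}$ invariant with the $[1]_{\vec 0}$ insertion and the constant term of each summand on the right is the corresponding $X_{D,\infty}$ invariant, yielding (\ref{string-equ}).

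The main obstacle — really the only thing to be careful about — is the interchange of "take constant term in $\vec r$" with "apply the string equation," which is legitimate precisely because the string equation is an identity of polynomials in $\vec r$ (after the uniform normalization), valid termwise for all large $\vec r$, so their constant terms agree; and the subtler bookkeeping that an untwisted marked point does not perturb the twisted-sector structure or the age/contact vectors at the other points, so that forgetting it lands in the expected moduli space and the pushforward classes $\tau_*[\,\cdot\,]^{\on{vir}}$ match up under the compatible forgetful maps $\bM_{g,m+1}(X,\beta)\to\bM_{g,m}(X,\beta)$. Once these compatibilities are in place, the proof is a one-line limit argument, exactly as asserted in the paragraph preceding the proposition ("It is straightforward to show that these universal equations are preserved under the limit").
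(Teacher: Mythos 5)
Your argument is exactly the paper's: the paper only remarks that the universal equations hold for the orbifold Gromov--Witten theory of $X_{D,\vec r}$ and are ``preserved under the limit,'' and your proposal is a correct, fleshed-out version of that same deduction (string equation at finite $\vec r$, unchanged normalization $\prod_i r_i^{s_{i,-}}$ since $\vec s^0=\vec 0$, compatibility of $\bar\psi$-classes under the forgetful maps, then take the $\prod_i r_i^0$-coefficient, which commutes with the finite sum over $j$). No gaps; the points you flag (untwisted string insertion, $\bar\psi$ pulled back from $\bM_{g,m}(X,\beta)$) are precisely the compatibilities needed.
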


\begin{prop}[Divisor equation]
For $\gamma\in H^2(X)$,
\begin{align*}
    &\left\langle [\gamma]_{\vec 0},[\gamma_1]_{\vec s^1}\bar{\psi}^{a_1},\ldots, [\gamma_m]_{\vec s^m}\bar{\psi}^{a_m} \right\rangle_{g,\{\vec s^j\}_{j=0}^m,\beta}^{X_{D,\infty}}=\left(\int_\beta \gamma\right)\left\langle [\gamma_1]_{\vec s^1}\bar{\psi}^{a_1},\ldots, [\gamma_m]_{\vec s^m}\bar{\psi}^{a_m} \right\rangle_{g,\{\vec s^j\}_{j=1}^m,\beta}^{X_{D,\infty}}\\
    &\quad +\sum_{j=1}^m \left\langle [\gamma_1]_{\vec s^1}\bar{\psi}^{a_1},\ldots,[\gamma_j\cdot \gamma]_{\vec s^j}\bar{\psi}^{a_j-1},\ldots, [\gamma_m]_{\vec s^m}\bar{\psi}^{a_m} \right\rangle_{g,\{\vec s^j\}_{j=1}^m,\beta}^{X_{D,\infty}}.
\end{align*}
\end{prop}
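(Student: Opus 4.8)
The plan is to deduce the divisor equation from the corresponding equation on the finite root stack $X_{D,\vec r}$ for $\vec r$ sufficiently large, and then pass to the limit defining the invariants of $X_{D,\infty}$. Recall from Definition \ref{defn:GW_inv} that each formal invariant of $X_{D,\infty}$ is the constant term in $\vec r$ of $\bigl(\prod_{i=1}^n r_i^{s_{i,-}}\bigr)$ times the corresponding orbifold invariant of $X_{D,\vec r}$, which by Theorem \ref{thm:poly-neg} is a polynomial in $\vec r$ once $\vec r$ is large.

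First I would record the divisor equation on $X_{D,\vec r}$. The marking indexed by $0$ carries $\vec s^0=\vec 0$, so it is an ordinary interior (untwisted) marking, and $[\gamma]_{\vec 0}$ with $\gamma\in H^2(X)$ is the pullback to $X_{D,\vec r}$ of a degree-two class on the coarse space $X$. Let $\pi$ be the forgetful map forgetting the $0$-th marking. Since the $\bar\psi$-classes on these moduli spaces are, by construction, pulled back from $\bM_{g,m}(X,\beta)$, and since $\gamma$ and the restrictions $\gamma_j|_{D_{I_{\vec s^j}}}$ are pulled back from the moduli of stable maps to $X$, the usual comparison $\bar\psi_j=\pi^*\bar\psi_j+[\text{boundary}]$ together with compatibility of $\pi$ with the virtual class yields
\[
\pi_*\Bigl(\on{ev}_0^*\gamma\cdot\textstyle\prod_{j=1}^m\bar\psi_j^{a_j}\cap[\bM_{g,\{\vec s^j\}_{j=0}^m}(X_{D,\vec r},\beta)]^{\on{vir}}\Bigr)=\Bigl(\int_\beta\gamma\Bigr)\prod_{j=1}^m\bar\psi_j^{a_j}\cap[\cdots]^{\on{vir}}+\sum_{j=1}^m\on{ev}_j^*(\gamma|_{D_{I_{\vec s^j}}})\,\bar\psi_j^{a_j-1}\!\!\prod_{k\neq j}\bar\psi_k^{a_k}\cap[\cdots]^{\on{vir}},
\]
which after integration against $\prod_{j}\on{ev}_j^*\gamma_j$ is exactly the divisor equation for $X_{D,\vec r}$. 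Equivalently, one may simply invoke the divisor equation for orbifold Gromov-Witten theory of the smooth Deligne-Mumford stack $X_{D,\vec r}$, which is valid for degree-two classes pulled back from the coarse space.

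Then I would pass to the limit. The crucial compatibility is that inserting the untwisted marking $0$ does not change the integers $s_{i,-}=\#\{j\ge 1:s_i^j<0\}$, because $s_i^0=0$ for all $i$; hence the same normalization $\prod_{i=1}^n r_i^{s_{i,-}}$ applies simultaneously to the invariant with the extra marking, the invariant without it, and each of the $m$ correction terms. By Theorem \ref{thm:poly-neg} every normalized term is a polynomial in $\vec r$ for large $\vec r$; taking constant terms is linear and commutes with multiplication by the number $\int_\beta\gamma$, so the divisor equation for $X_{D,\vec r}$ descends term by term to an identity among constant terms, which by Definition \ref{defn:GW_inv} are precisely the asserted formal invariants of $X_{D,\infty}$. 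The only point needing care is the comparison of $\bar\psi$-classes and the behaviour of the virtual class under the forgetful map on root-stack moduli; but since the $\bar\psi$'s and the divisor class are pulled back from $\bM_{g,m}(X,\beta)$, this reduces to the familiar absolute computation, so I do not expect a genuine obstacle.
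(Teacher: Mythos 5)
Your proposal is correct and follows essentially the same route as the paper: the paper invokes the divisor equation for the orbifold Gromov--Witten theory of the finite root stacks $X_{D,\vec r}$ (valid for untwisted degree-two insertions and coarse $\bar\psi$-classes) and then observes that the universal equations are preserved upon taking the polynomial constant term in $\vec r$, exactly as you argue. Your additional remark that the normalization $\prod_i r_i^{s_{i,-}}$ is unaffected by the extra untwisted marking is the same compatibility implicitly used in the paper, just made explicit.
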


\begin{prop}[Dilaton equation]
\begin{align*}
    &\left\langle \bar{\psi}[1]_{\vec 0},[\gamma_1]_{\vec s^1}\bar{\psi}^{a_1},\ldots, [\gamma_m]_{\vec s^m}\bar{\psi}^{a_m} \right\rangle_{g,\{\vec s^j\}_{j=0}^m,\beta}^{X_{D,\infty}}
    =& (2g-2+m)\left\langle [\gamma_1]_{\vec s^1}\bar{\psi}^{a_1},\ldots, [\gamma_m]_{\vec s^m}\bar{\psi}^{a_m} \right\rangle_{g,\{\vec s^j\}_{j=1}^m,\beta}^{X_{D,\infty}}.
\end{align*}
\end{prop}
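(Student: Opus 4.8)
The plan is to derive the dilaton equation for the formal Gromov-Witten invariants of $X_{D,\infty}$ directly from the corresponding equation for the orbifold Gromov-Witten invariants of the finite root stacks $X_{D,\vec r}$, by passing to the limit in the sense of Definition \ref{defn:GW_inv}. This is exactly the strategy flagged in the paragraph preceding the String equation: the universal equations are known for the orbifold theory of smooth Deligne-Mumford stacks, and our invariants are extracted as the $\prod_i r_i^0$-coefficient of a polynomial in $\vec r$ after multiplying by $\prod_i r_i^{s_{i,-}}$, so any $\vec r$-independent linear relation among orbifold invariants descends.

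First I would recall the dilaton equation for $X_{D,\vec r}$: for sufficiently large pairwise coprime $\vec r$,
\begin{align*}
\left\langle \bar\psi\, 1,\gamma_1\bar\psi^{a_1},\ldots,\gamma_m\bar\psi^{a_m}\right\rangle_{g,\{\vec 0,\vec s^1,\ldots,\vec s^m\},\beta}^{X_{D,\vec r}}
=(2g-2+m)\left\langle \gamma_1\bar\psi^{a_1},\ldots,\gamma_m\bar\psi^{a_m}\right\rangle_{g,\{\vec s^j\}_{j=1}^m,\beta}^{X_{D,\vec r}}.
\end{align*}
Here the extra marking carries the trivial orbifold sector (contact order $\vec 0$), so its insertion is the fundamental class $1\in H^*(X_{D,\vec r})$, and $\bar\psi$ is the pullback of the $\psi$-class from $\bM_{g,m+1}(X,\beta)$; under the map forgetting the last (untwisted) marking, the usual dilaton relation on $\bM_{g,m+1}(X,\beta)$ pulls back, and the virtual class of $X_{D,\vec r}$ is compatible with this forgetful morphism in genus and all contact data unchanged — this is the content of the dilaton equation in orbifold Gromov-Witten theory (see \cite{AGV}), applied to the smooth DM stack $X_{D,\vec r}$. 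The key point is that adding an untwisted marking with a $\bar\psi$ does not change the vector $(s_{1,-},\ldots,s_{n,-})$, since the new marking has $\vec s^0=\vec 0$ and hence contributes no negative contact order.

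Next I would multiply both sides by $\prod_{i=1}^n r_i^{s_{i,-}}$ — the same normalization factor on both sides, precisely because $s_{i,-}$ is the same for the $(m+1)$-pointed and the $m$-pointed invariant — and invoke Theorem \ref{thm:poly-neg} (via the discussion in Section \ref{sec:application}) to conclude that both sides are polynomials in $\vec r$ for $\vec r$ sufficiently large. A polynomial identity that holds for all sufficiently large pairwise coprime $\vec r$ holds coefficientwise; extracting the $\prod_i r_i^0$-coefficient of both sides and using Definition \ref{defn:GW_inv} yields
\begin{align*}
\left\langle \bar\psi[1]_{\vec 0},[\gamma_1]_{\vec s^1}\bar\psi^{a_1},\ldots,[\gamma_m]_{\vec s^m}\bar\psi^{a_m}\right\rangle_{g,\{\vec s^j\}_{j=0}^m,\beta}^{X_{D,\infty}}
=(2g-2+m)\left\langle [\gamma_1]_{\vec s^1}\bar\psi^{a_1},\ldots,[\gamma_m]_{\vec s^m}\bar\psi^{a_m}\right\rangle_{g,\{\vec s^j\}_{j=1}^m,\beta}^{X_{D,\infty}},
\end{align*}
which is the claim. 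The only genuine subtlety — and the step I expect to require the most care — is verifying that the normalization powers $\prod_i r_i^{s_{i,-}}$ genuinely match on the two sides so that the limit is taken compatibly; once that bookkeeping is in place the argument is purely formal, since we are merely transporting a linear relation through a coefficient-extraction operation. (One should also note that the invariants on both sides satisfy the same virtual dimension constraint \eqref{vir-dim}, shifted consistently by the extra marking and the extra $\bar\psi$, so no spurious vanishing is introduced.)
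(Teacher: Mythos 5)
Your proposal is correct and follows essentially the same route as the paper: the paper's proof is precisely the observation that the universal equations hold for the orbifold Gromov-Witten theory of the smooth Deligne-Mumford stacks $X_{D,\vec r}$ and are preserved under the normalization-and-constant-term extraction of Definition \ref{defn:GW_inv}, with your bookkeeping (the extra untwisted marking does not change $s_{i,-}$, so the factor $\prod_i r_i^{s_{i,-}}$ matches on both sides) being the ``straightforward'' verification the paper leaves implicit. The only cosmetic issue is the citation key for the orbifold dilaton equation, which is not among the paper's references; cite the orbifold Gromov-Witten literature already in the bibliography (e.g.\ \cite{Tseng}) instead.
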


\begin{prop}[TRR]
In genus zero,
\begin{align}\label{trr}
&\left\langle [\gamma_1]_{\vec s^1}\bar{\psi}^{a_1+1},\ldots, [\gamma_m]_{\vec s^m}\bar{\psi}^{a_m} \right\rangle_{0,\{\vec s^j\}_{j=1}^m,\beta}^{X_{D,\infty}}\\
\notag=&\sum \left\langle [\gamma_1]_{\vec s^1}\bar{\psi}^{a_1},\prod_{j\in S_1}[\gamma_j]_{\vec s^j}\bar{\psi}^{a_j}, \tilde{T}_{\vec s,k} \right\rangle_{0,\{\vec s^j\}_{j\in S_1\cup\{1\}},\vec s,\beta_1}^{X_{D,\infty}}\\
\notag& \qquad \cdot\left\langle \tilde{T}_{-\vec s}^k,[\gamma_2]_{\vec s^2}\bar{\psi}^{a_2},[\gamma_3]_{\vec s^3}\bar{\psi}^{a_3},\prod_{j\in S_2}[\gamma_j]_{\vec s^j}\bar{\psi}^{a_j} \right\rangle_{0,-\vec s,\{\vec s^j\}_{j\in S_2\cup\{2,3\}},\beta_2}^{X_{D,\infty}},
\end{align}
where the sum is over all splittings of $\beta_1+\beta_2=\beta$, all indices $\vec s, k$ of basis, and all splittings of disjoint sets $S_1$, $S_2$ with $S_1\cup S_2=\{4,\ldots,m\}$. Note that the right-hand side is a finite sum.
\end{prop}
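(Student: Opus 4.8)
The plan is to deduce the topological recursion relation for $X_{D,\infty}$ from the corresponding relation for the orbifold Gromov-Witten invariants of the finite multi-root stack $X_{D,\vec r}$, by passing to the limit $\vec r\to\infty$ and extracting the $\prod_i r_i^0$ coefficient, exactly as in the paragraph preceding the proposition. First I would recall that for any smooth Deligne-Mumford stack — in particular for $X_{D,\vec r}$ with $\vec r$ sufficiently large and pairwise coprime — the genus-zero descendant invariants satisfy the usual TRR, which comes from the geometry of $\overline{\mathcal M}_{0,m}$: the relation $\bar\psi_1 = \sum D_{1\mid\{2,3\}}$ of divisor classes on $\overline{\mathcal M}_{0,m}$ (where the sum is over boundary divisors in which the $4,\dots,m$ markings are distributed and the node separates $\{1\}\cup S_1$ from $\{2,3\}\cup S_2$), pulled back along the stabilization map $\bM_{0,\{\vec s^j\}}(X_{D,\vec r},\beta)\to\overline{\mathcal M}_{0,m}$, together with the splitting axiom of the (orbifold) virtual class along the diagonal in the rigidified inertia stack. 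This yields the TRR for $\langle\cdots\rangle^{X_{D,\vec r}}$, with the internal sum running over the sectors of the inertia stack of $X_{D,\vec r}$ and the dual basis of the orbifold Poincar\'e pairing.

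The next step is to track the powers of $r_i$ through this identity and match the limit structure. On the left-hand side the $\bar\psi$-classes are, by the Notation and the remark after Definition \ref{defn:GW_inv}, pullbacks of $\psi$-classes from $\bM_{g,m}(X,\beta)$, so the $\bar\psi$-insertions are compatible with the forgetful map $\tau$ and with the prefactors $\prod_i r_i^{s_{i,-}}$ appearing in the definition of the $X_{D,\infty}$-invariants. On the right-hand side one must check that the node-smoothing contribution distributes the negative-contact prefactors correctly: a sector of the inertia stack of $X_{D,\vec r}$ over a stratum $D_I$ with twist $\vec s$ and its opposite $-\vec s$, in the large-$r$ limit, contributes exactly the pair $(\tilde T_{\vec s,k},\tilde T_{-\vec s}^k)$ of the state space $\mathfrak H$, and the power of $r$ it carries splits as $\prod_i r_i^{s_{i,-}} = \prod_i r_i^{(\text{neg.\ part on side }1)} \cdot \prod_i r_i^{(\text{neg.\ part on side }2)}$ up to the single extra factor coming from the age/twist of the new node — which is precisely absorbed by the definition of the dual basis under the pairing (\ref{eqn:pairing}), since the dual of $\tilde T_{\vec s,k}$ is $\tilde T_{-\vec s}^k$. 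Taking the $\prod_i r_i^0$ coefficient of both sides, using that a product of polynomials has constant term the product of the constant terms, and invoking Theorem \ref{thm:poly-neg} to guarantee polynomiality, produces (\ref{trr}); finiteness of the sum follows because only finitely many $\vec s$ give $D_{I_{\vec s}}\neq\emptyset$ and only finitely many $(\beta_1,\beta_2,S_1,S_2)$ are effective/admissible.

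The main obstacle I anticipate is the careful bookkeeping of the $r$-exponents at the separating node, i.e.\ verifying that the prefactor $\prod_i r_i^{s_{i,-}}$ for the big invariant factors compatibly into the prefactors of the two sub-invariants in the splitting formula, with no leftover power of $r$. This is the point where negative contact orders interact nontrivially with the node: in the finite root stack an orbifold node over $D_i$ carries age $(\mathrm{age}_{\chi}L + w)/r$ on one branch and the complementary age on the other, so a marking that is ``negative'' on one side is ``positive'' on the other, and one has to confirm that the limiting prefactor is allocated entirely to the side on which the contact order is negative. Once this is pinned down — it is essentially the same matching already used to define the pairing and the dual basis, and it parallels the irreducible-divisor case in \cite[Section 7]{FWY} — the rest of the argument is the routine ``universal equations pass to the limit'' principle already invoked for the string, divisor and dilaton equations. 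I would therefore structure the written proof as: (1) state TRR for $X_{D,\vec r}$; (2) apply $\tau_*$ and multiply by $\prod_i r_i^{s_{i,-}}$; (3) identify the limit of the diagonal splitting with the pairing on $\mathfrak H$ and its dual basis; (4) extract constant terms and conclude.
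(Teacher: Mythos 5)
Your proposal is correct and follows essentially the same route as the paper, which deduces the TRR (like the string, dilaton and divisor equations) from the corresponding universal equation for the orbifold theory of $X_{D,\vec r}$ and then passes to the large-$\vec r$ limit, citing the irreducible-divisor case of \cite{FWY} as the model. The node bookkeeping you single out — matching the extra factors of $r_i$ at the new node against the pairing (\ref{eqn:pairing}) and the dual basis $\tilde T_{-\vec s}^k$ — is exactly the point the paper treats as "straightforward," so your write-up is a faithful (and more explicit) version of the intended argument.
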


\begin{prop}[WDVV] In genus zero,
\begin{align}\label{wdvv}
    &\sum \left\langle [\gamma_1]_{\vec s^1}\bar{\psi}^{a_1},[\gamma_2]_{\vec s^2}\bar{\psi}^{a_2},\prod_{j\in S_1}[\gamma_j]_{\vec s^j}\bar{\psi}^{a_j}, \tilde{T}_{\vec s,k} \right\rangle_{0,\{\vec s^j\}_{j\in S_1\cup\{1,2\}},\vec s,\beta_1}^{X_{D,\infty}}\\
\notag & \qquad \cdot\left\langle \tilde{T}_{-\vec s}^k,[\gamma_3]_{\vec s^3}\bar{\psi}^{a_3},[\gamma_4]_{\vec s^4}\bar{\psi}^{a_4},\prod_{j\in S_2}[\gamma_j]_{\vec s^j}\bar{\psi}^{a_j} \right\rangle_{0,-\vec s,\{\vec s^j\}_{j\in S_2\cup\{3,4\}},\beta_2}^{X_{D,\infty}}\\
\notag=&\sum \left\langle [\gamma_1]_{\vec s^1}\bar{\psi}^{a_1},[\gamma_3]_{\vec s^3}\bar{\psi}^{a_3}\prod_{j\in S_1}[\gamma_j]_{\vec s^j}\bar{\psi}^{a_j}, \tilde{T}_{\vec s,k} \right\rangle_{0,\{\vec s^j\}_{j\in S_1\cup\{1,3\}},\vec s,\beta_1}^{X_{D,\infty}}\\
\notag & \qquad \cdot\left\langle \tilde{T}_{-\vec s}^k,[\gamma_2]_{\vec s^2}\bar{\psi}^{a_2},[\gamma_4]_{\vec s^4}\bar{\psi}^{a_4},\prod_{j\in S_2}[\gamma_j]_{\vec s^j}\bar{\psi}^{a_j} \right\rangle_{0,-\vec s,\{\vec s^j\}_{j\in S_2\cup\{2,4\}},\beta_2}^{X_{D,\infty}},
\end{align}
where each sum is over all splittings of $\beta_1+\beta_2=\beta$, all indices $\vec s, k$ of basis, and all splittings of disjoint sets $S_1$, $S_2$ with $S_1\cup S_2=\{5,\ldots,m\}$. Note that both sides are finite sums.
\end{prop}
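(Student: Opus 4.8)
The plan is to deduce the WDVV equation for the formal invariants of $X_{D,\infty}$ directly from the WDVV equation for the orbifold Gromov-Witten invariants of the finite root stacks $X_{D,\vec r}$, together with the polynomiality established in Theorem \ref{thm:poly-neg} and the definition of the formal invariants as the $\prod_i r_i^0$-coefficient (Definition \ref{defn:GW_inv}). First I would recall that for any fixed sufficiently large $\vec r$ the orbifold Gromov-Witten theory of the smooth Deligne-Mumford stack $X_{D,\vec r}$ is an honest Gromov-Witten theory, hence satisfies the ordinary orbifold WDVV equation: for four insertions $\gamma_1\bar\psi^{a_1},\ldots,\gamma_4\bar\psi^{a_4}$ and extra insertions indexed by $\{5,\ldots,m\}$, the two ways of splitting the four-pointed boundary of $\bM_{0,m}$ agree, where the sum over intermediate states runs over a basis of $H^*_{\mathrm{CR}}(X_{D,\vec r})$, i.e. over all twisted sectors $D_{I}\times B\mu_{\text{something}}$, which for $\vec r$ large corresponds exactly to summing over $\vec s\in\mathbb Z^n$ with $D_{I_{\vec s}}\neq\emptyset$ and over a basis $\{T_{I_{\vec s},k}\}$ of $H^*(D_{I_{\vec s}})$, the intermediate contact order being $\vec s$ on one side and $-\vec s$ on the other by the splitting/gluing formula for orbifold evaluation maps.

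The second step is to track the powers of $r_i$. On each side of the orbifold WDVV equation one has a product of two connected invariants; the $j$-th external marking contributes a factor $r_i^{s_{i,-}^j}$ under the normalization in Definition \ref{defn:GW_inv}, and the internal (glued) marking contributes on the two sides factors governed by the negative parts of $\vec s$ and $-\vec s$ respectively. The key bookkeeping point is that $\#\{i:s_i<0\}+\#\{i:(-s)_i<0\}=\#\{i:s_i\neq 0\}$, so after multiplying each connected invariant by the appropriate $\prod_i r_i^{(\text{local }s_{i,-})}$ the total power of $\prod_i r_i$ attached to the glued edge is the same on both sides of the equation, namely $\prod_i r_i^{\#\{i:s^{\text{ext}}_{i,-}\}}$ times the contribution $\prod_{i\in I_{\vec s}} r_i$ from the edge split between the two factors — this is precisely the normalization encoded in the dual basis statement $\tilde T_{-\vec s}^k$ dual to $\tilde T_{\vec s,k}$ under the pairing on $\mathfrak H$. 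Multiplying the whole identity by $\prod_{i=1}^n r_i^{s_{i,-}}$ (with $s_{i,-}=\#\{j:s_i^j<0\}$ the external count) turns both sides into polynomials in $\vec r$ by Theorem \ref{thm:poly-neg}, and the identity of polynomials is then preserved coefficient by coefficient.

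The third step is to extract the $\prod_i r_i^0$-coefficient. Since taking a fixed monomial coefficient of a polynomial identity is linear and the sums on both sides are finite (the curve class is bounded, so only finitely many splittings $\beta_1+\beta_2=\beta$ contribute, and for fixed $\beta$ only finitely many $\vec s$ with $D_{I_{\vec s}}\neq\emptyset$ can appear as an intermediate contact order by the dimension/positivity constraints), the constant term of the product of two polynomials is the product of constant terms plus cross terms; one needs to check that the cross terms vanish, which follows because the normalized connected invariants are themselves polynomials with nonnegative-in-$r$ leading behaviour controlled so that the constant term of the product equals the product of constant terms — more precisely, by Theorem \ref{thm:poly-neg} each normalized connected factor is a polynomial in $\vec r$, so writing each as (constant term) $+$ (higher order), the $\prod_i r_i^0$ part of the product is exactly (product of constant terms). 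Replacing each normalized orbifold connected invariant by its constant term gives, by Definition \ref{defn:GW_inv}, the corresponding formal invariant of $X_{D,\infty}$, and the intermediate sum becomes the sum over $\vec s,k$ with the pairing of $\mathfrak H$ built in via the dual bases $\tilde T_{\vec s,k},\tilde T_{-\vec s}^k$. This yields exactly equation (\ref{wdvv}).

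The main obstacle I expect is the bookkeeping in the second step: verifying that the powers of $r_i$ attached to the glued node on the two sides of the orbifold WDVV equation match up so that multiplying by the single global factor $\prod_i r_i^{s_{i,-}}$ simultaneously polynomializes both sides, and confirming that the constant term of a product of two polynomials in $\vec r$ is the product of the constant terms (equivalently, that the normalized connected invariants have no negative powers of $r_i$, which is where Theorem \ref{thm:poly-neg} — rather than a weaker Laurent-polynomiality — is essential). Once this is settled the rest is a formal manipulation identical to the argument for the string, divisor, and dilaton equations and to the irreducible-divisor case treated in \cite{FWY}.
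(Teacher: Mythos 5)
Your proposal is correct and follows essentially the same route as the paper: the paper derives WDVV (and the other universal equations) by invoking the WDVV equation for the orbifold Gromov--Witten theory of the finite root stacks $X_{D,\vec r}$ and observing, via the polynomiality of Theorem \ref{thm:poly-neg} and Definition \ref{defn:GW_inv}, that the equation is preserved when passing to the $\prod_i r_i^0$-coefficient, exactly as in the irreducible case of \cite{FWY}. Your more careful bookkeeping of the $r_i$-powers at the glued node and the observation that the constant term of a product of genuine polynomials is the product of constant terms (in genus zero the normalized invariants are in fact constant in $\vec r$) simply fills in the details the paper leaves as ``straightforward.''
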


\begin{rem}
Just like the WDVV equation for absolute Gromov-Witten theory implies the associativity of the quantum cohomology, the WDVV equation for the formal Gromov-Witten theory of infinite root stacks also implies the associativity of the relative quantum cohomology. 
Note that in \cite{GS19}, it requires extensive arguments to prove the associativity for (the degree zero part of) the relative quantum cohomology. While in our case, we obtain the associativity for free. Since we do not know the relation between the invariants that we considered here and the punctured invariants in \cite{GS19} and \cite{ACGS}, it is not known that if our approach will provide an easier proof of the associativity in \cite{GS19}. 

The compatibility between this new theory and the Gross-Siebert program will be discussed in Section \ref{sec:frob}.
\end{rem}

\subsection{Relative quantum cohomology ring}\label{sec:rel-qh}

Let $ t=\sum t_{\vec s,k}\tilde{T}_{\vec s,k}$ where $t_{\vec s,k}$ are formal variables. Let $\mathbb C[\![\on{NE}(X)]\!]$ be the Novikov ring, where $q$ is the Novikov variable and $\on{NE}(X)$ be the cone of effective curve classes in $X$. We denote the formal power series ring with variables $t_{\vec s,k}$ by
\[
\mathbb C[\![\on{NE}(X)]\!][\![\{t_{\vec s,k}\}]\!].
\]
Note that there are infinitely many variables. We will work on a completion of this ring. Consider the ideals
\[
I_p=(\{t_{\vec s,k}\}_{|s_i|\geq p, \forall i})
\]
for $p\geq 0$. These ideals form a chain
\[
I_0\supset I_1 \supset I_2 \supset \cdots.
\]
Now we have the completion
\[
\mathbb C[\![\on{NE}(X)]\!]\widehat{[\![\{t_{\vec s,k}\}]\!]}=\varprojlim \mathbb C[\![\on{NE}(X)]\!][\![\{t_{\vec s,k}\}]\!]/I_p.
\]

The \emph{genus-zero potential for the Gromov-Witten theory of infinite root stacks} is defined to be
\[\Phi_0( t)=\sum_{m\geq 3}\sum_{\beta}\frac{1}{m!}\langle t,\cdots,t\rangle_{0,m,\beta}^{X_{D,\infty}} q^{\beta}\in\mathbb C[\![\on{NE}(X)]\!]\widehat{[\![\{t_{\vec s,k}\}]\!]}.\]
Note that $\Phi_0$ is a formal function in variables $\{t_{\vec s,k}\}$. To define a ring structure on $\mathbb C[\![\on{NE}(X)]\!]\widehat{[\![\{t_{\vec s,k}\}]\!]}$,  we define the quantum product $\star$ by the following
\[
\tilde{T}_{\vec s^1,k_1}\star \tilde{T}_{\vec s^2,k_2}=\sum_{\vec s^3,k_3}\frac{\partial^3 \Phi_0}{\partial t_{\vec s^1,k_1}\partial t_{\vec s^2,k_2}\partial t_{\vec s^3,k_3}}\tilde{T}_{-\vec s^3}^{k_3}.
\]
Recall that $\tilde{T}_{\vec s^3,k_3}$ and $\tilde{T}_{-\vec s^3}^{k_3}$ are dual to each other under the pairing.

One can also define small relative quantum cohomology ring by setting $t_{\vec s,k}=0$ if $\vec s\neq \vec 0$ or $\tilde{T}_{\vec 0,k}\not\in H^0(X)\oplus H^2(X)\subset \mathfrak H_{\vec 0}$ in the formal function
\[
\frac{\partial^3 \Phi_0}{\partial t_{\vec s^1,k_1}\partial t_{\vec s^2,k_2}\partial t_{\vec s^3,k_3}}.
\]
The small relative quantum product is denoted by $\star_{\on{sm}}$. The small relative quantum cohomology ring is denoted by $QH(X_{D,\infty})$.

Similar to the absolute Gromov-Witten theory, under the specialization $q=0$ and $ t=0$, we obtain the product structure of the state space in Section \ref{sec:state-space}:
\[
\tilde{T}_{\vec s^1,k_1}\star_{q=0, t=0} \tilde{T}_{\vec s^2,k_2}=\sum_{\vec s^3,k_3}\left\langle \tilde{T}_{\vec s^1,k_1},\tilde{T}_{\vec s^2,k_2}, \tilde{T}_{\vec s^3,k_3}\right\rangle_{0,3,0}^{X_{D,\infty}}\tilde{T}_{-\vec s^3}^{k_3}.
\]

Relative quantum cohomology ring is a multi-graded ring. Similar to \cite[Section 7.3]{FWY}, the gradings are defined as extensions of $\deg^{i}$ in (\ref{deg-0}) and (\ref{deg-i}). Furthermore, we define
\[
\deg^{(i)}(q^\beta)=\int_\beta D_i, \quad \deg^{(i)}(t_{\vec s,k})=-s_i, \text{ for }  i\in\{1,\ldots, n\},
\]
\[
\deg^{(0)}(q^\beta)=\int_\beta c_1(T_X(-\log D)), \quad \deg^{(0)}(t_{\vec s,k})=1-\deg^{(0)}(\tilde{T}_{\vec s,k}).
\]

\section{Givental formalism}\label{sec:givental}
In this section, we set up Givental formalism for genus zero formal Gromov-Witten theory of the infinite root stack $X_{D,\infty}$ following \cite{Givental04}. A mirror theorem for infinite root stacks has already been proved in \cite{TY20b}. This section provides the necessary foundation for \cite{TY20b}.

Consider the space
\[
\mathcal H=\mathfrak H \otimes_{\mathbb C}\mathbb C[\![\on{NE}(X)]\!](\!(z^{-1})\!),
\]
where $(\!(z^{-1})\!)$ means formal Laurent series in $z^{-1}$.

There is a $\mathbb C[\![\on{NE}(X)]\!]$-valued symplectic form
\[
\Omega(f,g)=\text{Res}_{z=0}(f(-z),g(z))dz, \text{ for } f,g\in \mathcal H,
\]
where the pairing $(f(-z),g(z))$ takes values in $ \mathbb C[\![\on{NE}(X)]\!](\!(z^{-1})\!)$ and is induced by the pairing on $\mathfrak H$.

Consider the following polarization
\[
\mathcal H=\mathcal H_+\oplus\mathcal H_-,
\]
where
\[
\mathcal H_+=\mathfrak H \otimes_{\mathbb C} \mathbb C[\![\on{NE}(X)]\!][z], \quad \text{and} \quad \mathcal H_-=z^{-1}\mathfrak H \otimes_{\mathbb C} \mathbb C[\![\on{NE}(X)]\!][\![z^{-1}]\!].
\]
There is a natural symplectic identification between $\mathcal H_+\oplus \mathcal H_-$ and the cotangent bundle $T^*\mathcal H_+$.

For $l\geq 0$, we write $ t_l=\sum\limits_{\vec s,k} t_{l;\vec s,k}\widetilde T_{\vec s,k}$ where $t_{l;\vec s,k}$ are formal variables. Also write
\[
\mathbf t(z)=\sum\limits_{l=0}^\infty t_l z^l.
\]

\emph{The genus $g$ descendant Gromov-Witten potential of $X_{D,\infty}$} is defined as
\[
\mathcal F^g_{X_{D,\infty}}(\bt(z))=\sum\limits_\beta \sum\limits_{m=0}^\infty \dfrac{q^\beta}{m!} \left\langle\mathbf t(\bar\psi),\ldots,\mathbf t(\bar\psi)\right\rangle_{g,m,\beta}^{X_{D,\infty}}.
\]
\emph{The total descendant Gromov-Witten potential} is defined as
\[
\mathcal D_{X_{D,\infty}}(\mathbf t):=\exp \left(\sum_{g \geq 0}\hbar^{g-1}\mathcal F_{X_{D,\infty}}^g(\mathbf t)\right).
\]

Following \cite{Givental04}, we define the dilaton-shifted coordinates of $\mathcal H_+$
\[
\mathbf q(z)=q_0+q_1z+q_2z^2+\ldots=-z+t_0+t_1z+t_2z^2+\ldots.
\]
\[
\mathbf p(z)=p_0z^{-1}+p_1z^{-2}+\ldots = \sum\limits_{l\leq -1} \sum\limits_{\vec s,k} p_{l;\vec s,k}\tilde T_{-\vec s}^k z^l.
\]
Coordinates $\mathbf p(z)$ in $\mathcal H_-$ are chosen so that $q, p$ form Darboux coordinates. 

One can consider the graph of the differential $d\mathcal F^0_{X_{D,\infty}}$:
\[
\mathcal L_{X_{D,\infty}}:=\{(\mathbf p,\mathbf q)| \mathbf p=d_{\mathbf q}\mathcal F^0_{X_{D,\infty}}\}\subset \mathcal H=T^*\mathcal H_+.
\]
Equivalently, a (formal) point in $\mathcal L_{X_{D,\infty}}$ can be explicitly written as
\[
-z+\mathbf t(z)+\sum\limits_{\beta} \sum\limits_{m} \sum\limits_{\vec s,k} \dfrac{q^\beta}{m!} \left\langle\dfrac{\tilde T_{\vec s,k}}{-z-\bar\psi},\mathbf t(\bar\psi),\ldots,\mathbf t(\bar\psi)\right\rangle_{0,m+1,\beta}^{X_{D,\infty}} \tilde T_{-\vec s}^k.
\]

By \cite[Theorem 1]{Givental04} (see also \cite[Theorem 3.1.1]{Tseng} for orbifold Gromov-Witten theory), string equation, dilaton equation and topological recursion relations imply the following property.
\begin{prop}\label{prop-tangent}
$\mathcal L_{X_{D,\infty}}$ is the formal germ of a Lagrangian cone with vertex at the origin such that each tangent space $T$ to the cone is tangent to the cone exactly along $zT$.
\end{prop}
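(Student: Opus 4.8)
The plan is to follow Givental's original argument in \cite{Givental04} essentially verbatim, using the three universal equations (string, dilaton, TRR) which we have already established for the formal Gromov-Witten theory of $X_{D,\infty}$ in Section \ref{Sec:univ_eqns}. The statement is purely formal: it only uses the structure of the generating function $\mathcal F^0_{X_{D,\infty}}$ as a power series subject to these three functional equations, together with the fact that the pairing on $\mathfrak H$ is nondegenerate (so that $\{\tilde T_{\vec s,k}\}$ and $\{\tilde T_{-\vec s}^k\}$ are genuinely dual bases and the symplectic form $\Omega$ is well-defined). Thus the proof is really a matter of checking that the ambient formalism — the completed Novikov ring, the infinitely many variables $t_{\vec s,k}$, the polarization $\mathcal H = \mathcal H_+ \oplus \mathcal H_-$ — does not obstruct the formal manipulations.

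First I would recall the precise content: writing $\mathcal L = \mathcal L_{X_{D,\infty}}$ as the graph of $d\mathcal F^0$, one must show (i) $\mathcal L$ is a cone, i.e. invariant under rescaling, which follows from the dilaton equation applied to $\mathcal F^0$; (ii) for a point $f \in \mathcal L$ with tangent space $T = T_f\mathcal L$, one has $f \in zT$ and, more strongly, $\mathcal L \cap (f + zT) = zT$ in the appropriate formal sense; and (iii) the tangent space $T$ is itself a (maximal isotropic) subspace on which the cone is ``ruled,'' i.e. $T \cap \mathcal L \supseteq zT$ and the cone is swept out by the family $zT$ as $f$ varies. The key computational input is the standard identity expressing $\partial^2 \mathcal F^0 / \partial t_{\vec s^1,k_1}\partial t_{\vec s^2,k_2}$ and the third derivatives in terms of two-point and three-point descendant series; TRR converts the third-derivative (tangent-space) data into a product of two correlators glued along a dual basis $\sum_k \tilde T_{\vec s,k} \otimes \tilde T_{-\vec s}^k$, which is exactly the mechanism that forces the $zT$-ruling. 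I would organize this as: (1) set up the generating-series notation matching \cite[Section 1]{Givental04}; (2) derive the dilaton-shift/cone property; (3) use string and TRR to identify $T$ and verify the $zT$ tangency; (4) remark that every step is a formal power series identity valid over $\mathbb C[\![\on{NE}(X)]\!]\widehat{[\![\{t_{\vec s,k}\}]\!]}((z^{-1}))$.

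The main obstacle I anticipate is not any single deep step but rather the bookkeeping needed to make the formal framework rigorous: one has infinitely many directions $\vec s \in \mathbb Z^n$ in the state space $\mathfrak H$, so sums like $\sum_{\vec s,k} \langle \tilde T_{\vec s,k}/(-z-\bar\psi), \mathbf t(\bar\psi),\ldots\rangle \tilde T_{-\vec s}^k$ must be shown to converge in the $I_p$-adic topology on the completed ring, and the TRR sum over all splittings $(\beta_1,\beta_2)$, $(S_1,S_2)$, $\vec s$, $k$ must be finite (which is asserted in Proposition~4.x but should be invoked carefully). Once this is granted — and it follows because for fixed insertions the contact-order vectors appearing in any nonzero correlator are bounded by the curve class via $\sum_j s_i^j = \int_\beta [D_i]$ — the argument of \cite[Theorem 1]{Givental04} (see also \cite[Theorem 3.1.1]{Tseng} for the orbifold case, whose state space is already a direct sum over twisted sectors and so is structurally identical to ours) applies without change. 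Accordingly, the proof will be short: cite \cite{Givental04} and \cite{Tseng} for the formal mechanism, note that the only hypotheses used are the string, dilaton, and topological recursion relations together with nondegeneracy of the pairing on $\mathfrak H$, all of which hold here, and check the convergence/finiteness points above.
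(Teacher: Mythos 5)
Your proposal matches the paper's treatment: the paper likewise establishes string, dilaton, and TRR for $X_{D,\infty}$ as limits of the finite root stack equations and then deduces the proposition directly from \cite[Theorem 1]{Givental04} (and \cite[Theorem 3.1.1]{Tseng}), with the completed ring $\mathbb C[\![\on{NE}(X)]\!]\widehat{[\![\{t_{\vec s,k}\}]\!]}$ and the asserted finiteness of the TRR sums playing exactly the bookkeeping role you describe. Your write-up is somewhat more explicit about the convergence/finiteness checks, but it is essentially the same argument.
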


Following \cite{Barannikov}, the set of tangent spaces $T$ to the cone $\mathcal L$ satisfying Proposition \ref{prop-tangent} carries a canonical Frobenius structure. We refer to \cite{Givental04} for more details. 

\begin{defn}
We define the $J$-function $J_{X_{D,\infty}}(t,z)$ as follows,
\[
J_{X_{D,\infty}}(t,z)=z+t+\sum_{m\geq 1, \beta\in \on{NE}(X)}\sum_{\vec s,k}\frac{q^\beta}{m!}\left\langle\dfrac{\tilde T_{\vec s,k}}{-z-\bar\psi},t,\ldots, t\right\rangle_{0,m+1,\beta}^{X_{D,\infty}} \tilde T_{-\vec s}^k.
\]
\end{defn}
The $J$-function is a formal power series in coordinates $t_{\vec s, k}$ of $t=\sum t_{\vec s,k}\tilde{T}_{\vec s,k}\in \mathfrak H$ taking values in $\mathcal H$. The point of $\mathcal L_{X_{D,\infty}}$ above $-z+t\in \mathcal H_+$ is $J_{\mathcal X_{D,\infty}}(t,-z)$. In other words, $J_{\mathcal X_{D,\infty}}(t,-z)$ is the intersection of $\mathcal L_{X_{D,\infty}}$ with $(-z+t)+\mathcal H_-$.

The $I$-function $I_{X_{D,\infty}}$ for $X_{D,\infty}$ is constructed in \cite[Section 4]{TY20b} as a hypergeometric modification of the $J$-function of $X$. Using Givental formalism that we just developed, a mirror theorem for the infinite root stack $X_{D,\infty}$ can be stated as follows.
\begin{thm}\label{thm:mirror}
Let $X$ be a smooth projective variety. Let $D:=D_1+D_2+...+D_n$ be a simple normal-crossing divisor with $D_i\subset X$ smooth, irreducible and nef. The $I$-function $I_{X_{D,\infty}}$, defined in \cite[Section 4]{TY20b}, of the infinite root stack $X_{D,\infty}$ lies in Givental's Lagrangian cone $\mathcal L_{X_{D,\infty}}$ of $X_{D,\infty}$.
\end{thm}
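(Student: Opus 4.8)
The plan is to deduce the mirror theorem for $X_{D,\infty}$ from the mirror theorem for finite multi-root stacks $X_{D,\vec r}$ by taking the $\prod_i r_i^0$-limit, exactly parallel to how the invariants of $X_{D,\infty}$ were defined in Section \ref{sec:application}. The key point is that every object appearing in the statement --- Givental's Lagrangian cone, the $J$-function, and the $I$-function --- is, by construction, a limit of the corresponding object for $X_{D,\vec r}$.

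First I would recall from \cite{TY20b} that the $I$-function $I_{X_{D,\infty}}$ is defined as the $\prod_i r_i^0$-coefficient (for $\vec r$ sufficiently large) of the $I$-function $I_{X_{D,\vec r}}$ of the multi-root stack, which is itself a hypergeometric modification of the $J$-function of $X$; the nef hypothesis on each $D_i$ guarantees that the hypergeometric factors produce only non-negative powers of $z$ after the change of variables, so that $I_{X_{D,\vec r}}$ genuinely lies on $\mathcal L_{X_{D,\vec r}}$. Next I would invoke the orbifold mirror theorem: since $X_{D,\vec r}$ is a smooth proper Deligne-Mumford stack, and since $I_{X_{D,\vec r}}$ is obtained from $J_X$ by a hypergeometric modification of the type covered by the toric-fibration / quantum Lefschetz results for orbifolds (as used in \cite{TY20b}), one has $I_{X_{D,\vec r}}\in\mathcal L_{X_{D,\vec r}}$ for all sufficiently large pairwise coprime $\vec r$. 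Equivalently, after the mirror map $\tau_{\vec r}$, the function $I_{X_{D,\vec r}}(-z)$ equals $J_{X_{D,\vec r}}(\tau_{\vec r},-z)$.

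The main step is then to pass to the limit. By Corollary \ref{cor-limit} and the polynomiality of Theorems \ref{thm:poly} and \ref{thm:poly-neg}, each genus-zero descendant invariant of $X_{D,\vec r}$, after multiplying by the appropriate $\prod_i r_i^{s_{i,-}}$, is a polynomial in the $r_i$, and its constant term is by definition the corresponding invariant of $X_{D,\infty}$. Hence the $J$-function $J_{X_{D,\vec r}}$, suitably normalized, has a well-defined $\prod_i r_i^0$-limit equal to $J_{X_{D,\infty}}$, and the statement $I_{X_{D,\vec r}}\in\mathcal L_{X_{D,\vec r}}$ --- which is the identity $I_{X_{D,\vec r}}(-z)=J_{X_{D,\vec r}}(\tau_{\vec r},-z)$ in the power series ring --- is an identity between polynomials in $\vec r$; extracting the $\prod_i r_i^0$-coefficient of both sides yields $I_{X_{D,\infty}}(-z)=J_{X_{D,\infty}}(\tau_\infty,-z)$, i.e. $I_{X_{D,\infty}}\in\mathcal L_{X_{D,\infty}}$. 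Here one uses Proposition \ref{prop-tangent}, which guarantees that $\mathcal L_{X_{D,\infty}}$ really is a Lagrangian cone of the expected shape, so that membership is characterized by the $J$-function identity.

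The hard part will be bookkeeping the limit carefully: one must check that the normalizing powers $\prod_i r_i^{s_{i,-}}$ attached to the various insertions are compatible with the hypergeometric factors defining $I_{X_{D,\vec r}}$ (so that the limit of $I_{X_{D,\vec r}}$ is literally $I_{X_{D,\infty}}$ and not some twist of it), and that the mirror map $\tau_{\vec r}$ --- determined by the $z^0$ and $z^{-1}$ terms of $I_{X_{D,\vec r}}$ --- itself has a polynomial dependence on $\vec r$ with constant term a well-defined mirror map $\tau_\infty$ for $X_{D,\infty}$. Once the dictionary between the $\vec r$-normalizations on the $I$-function side and on the invariant side is pinned down, the limit argument is formal; the only genuine input is the finite-level orbifold mirror theorem for $X_{D,\vec r}$, which is supplied by \cite{TY20b}.
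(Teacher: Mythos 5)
Your proposal matches the paper's (implicit) treatment: the paper states Theorem \ref{thm:mirror} as a consequence of the finite-level multi-root-stack mirror theorem of \cite{TY20b}, with $I_{X_{D,\infty}}$ defined as the large-$\vec r$ limit of the finite-level $I$-functions and cone membership obtained exactly by the limiting/$r$-independence argument you describe (in genus zero, Theorems \ref{thm:poly} and \ref{thm:poly-neg} give independence of $\vec r$ after the normalization, so "taking the constant term" is the same as evaluating at large $\vec r$). The only caution is that for the extended $I$-function, lying on $\mathcal L_{X_{D,\vec r}}$ is not literally the identity $I(-z)=J(\tau_{\vec r},-z)$ but rather equality with a general point $-z+\mathbf t(z)+\cdots$ of the cone; the limit argument goes through verbatim with this formulation, and with that adjustment your route is essentially the intended one.
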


\begin{rem}
The $I$-function $I_{D,\infty}$ considered in \cite[Section 4]{TY20b} is taken as a limit of the $I$-functions for finite root stacks. Theorem \ref{thm:mirror} holds for both non-extended $I$-function and extended $I$-function. When $D$ is a smooth divisor, Theorem \ref{thm:mirror} is simply \cite[Theorem 1.4]{FTY} for non-extended $I$-function and \cite[Theorem 1.5]{FTY} for extended $I$-function of the smooth pair $(X,D)$.
\end{rem}

\section{Virasoro constraints}\label{Sec:vir}
Givental formalism implies Virasoro constraints for genus zero Gromov-Witten invariants of infinite root stacks. We briefly describe it in this section.

Given a class $[\alpha]_{\vec s}\in \mathfrak H$ such that $\alpha\in H^{p,q}(D_{I_{\vec s}})$. Note that when $\vec s=\vec 0$, we use the convention that $D_{I_{\vec 0}}=D_\emptyset=X$. We define two operators $\rho,\mu$ as follows.
\[
\rho([\alpha]_{\vec s})=\left[\alpha\cup c_1(T_X(-\log D))\right]_{\vec s},
\]
\[
\mu([\alpha]_{\vec s})=\left[(\dim_{\mathbb C}(X)/2-p-\#\{i:s_i<0\})\alpha\right]_{\vec s}.
\]

Then we define the following transformations:
\begin{align*}
    \begin{split}
        &l_{-1}=z^{-1},\\
        &l_{0}=zd/dz + 1/2 + \mu + \rho/z,\\
        &l_m=l_0(zl_0)^m, \quad m\geq 1.
    \end{split}
\end{align*}
Recall that an operator $A:\mathcal H\rightarrow \mathcal H$ is called infinitesimal symplectic if it satisfies
\[
\Omega(A(f),g)+\Omega(f,A(g))=0 \text{ for all } f,g\in \mathcal H.
\]
One can check that $l_{m}$ are infinitesimal symplectic. Furthermore, the operator $l_m$ satisfies the following commutation relations:
\[
\{l_m,l_n\}=(n-m)l_{m+n},
\]
where 
$\{-,-\}$ is the Poisson bracket.

Following \cite{Givental04}, an infinitesimal symplectic transformation $A$ gives rise to a vector field on $\mathcal H$ in the following way. The tangent space of $\mathcal H$ at a point $f\in \mathcal H$ can be naturally identified with $\mathcal H$ itself. One obtains a tangent vector field on $\mathcal H$ by assigning the vector $A(f)\in T_f \mathcal H$ to the point $f$. The following proposition follows from \cite[Theorem 6]{Givental04}.
\begin{prop}\label{prop-oper}
The vector fields defined by the operators $l_m$, $m=1,2,\ldots,$ are tangent to the Lagrangian cone $\mathcal L$.
\end{prop}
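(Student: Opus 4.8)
The plan is to deduce Proposition~\ref{prop-oper} from the general Givental–Virasoro machinery, exactly as in the absolute and orbifold cases (\cite{Givental04}, \cite{Tseng}), by checking that our state space $\mathfrak H$, its pairing, and the operators $\rho,\mu$ satisfy the hypotheses of \cite[Theorem~6]{Givental04}. Concretely, the first step is to record that $\mathcal L_{X_{D,\infty}}$ is a Lagrangian cone with the tangency property of Proposition~\ref{prop-tangent}; this is already available to us, being an immediate consequence of the string equation, dilaton equation, and TRR established in Section~\ref{Sec:univ_eqns}. The second step is to verify that the operators $l_m$ are indeed infinitesimal symplectic with respect to $\Omega$ and satisfy the Witt-algebra bracket $\{l_m,l_n\}=(n-m)l_{m+n}$; the bracket relation is a formal computation with the $\mathfrak{sl}_2$-type generators $l_{-1},l_0,l_1$, and the infinitesimal-symplectic property reduces to checking that $\mu$ is anti-self-adjoint and $\rho$ is self-adjoint for the pairing $(-,-)$ on $\mathfrak H$. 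Both of these are stated in the paragraph preceding the proposition, so for the proof I would only need to point to that verification.

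The heart of the argument is the third step: showing the vector field generated by $l_m$ is tangent to $\mathcal L_{X_{D,\infty}}$. Following Givental, one writes the dilaton-shifted $J$-function (equivalently, a generic point of $\mathcal L$) and checks that the quantization $\widehat{l_m}$ annihilates the total descendant potential $\mathcal D_{X_{D,\infty}}$ modulo the genus-zero (semiclassical) part, which is equivalent to the tangency assertion. The key input that makes Givental's proof go through verbatim is: (i) the $J$-function/$\mathcal L$ is built from genuine Gromov-Witten invariants satisfying string, dilaton, divisor and TRR (Section~\ref{Sec:univ_eqns}); (ii) the operator $\rho$ acts by cup product with $c_1(T_X(-\log D))$, which is the correct ``anticanonical'' class dictating the divisor equation and the virtual dimension constraint (\ref{vir-dim}); and (iii) the operator $\mu$ is the grading operator $\deg^{(0)}-\tfrac12\dim_{\mathbb C}X$ shifted by the negative-contact-order count $\#\{i:s_i<0\}$, which matches the degree shift (\ref{deg-0}) in the state space. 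One must check that the $\bar\psi$-classes appearing in our invariants are pulled back from $\bM_{g,m}(X,\beta)$ (noted right after Definition~\ref{defn:GW_inv}), so that the operator $zd/dz$ interacts with the descendant insertions in the standard way; with this in hand, the combinatorial identity underlying Givental's Theorem~6—that the commutator of $l_0$ with the TRR-governed recursion produces precisely the Virasoro shift—carries over with the dimension of $X$ replaced appropriately and the log-tangent bundle replacing $T_X$.

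The main obstacle, and the only place requiring genuine care rather than citation, is verifying that the divisor equation and dimension axiom in our setting feed into Givental's argument with the ``shifted'' operators $\rho$ and $\mu$ as defined here. In the absolute case $\rho$ is cup product with $c_1(T_X)$ and $\mu$ is the naive Hodge grading; here both are corrected by the divisor $D$ (via $c_1(T_X(-\log D))=c_1(T_X)-[D]$) and by the negative-contact bookkeeping. One must confirm that these corrections are exactly compatible: the $-[D]$ in $\rho$ reflects the $-\int_\beta[D]$ term in the virtual dimension constraint (\ref{vir-dim}) and the divisor equation for $[\gamma]_{\vec 0}$, while the $\#\{i:s_i<0\}$ shift in $\mu$ matches the degree grading (\ref{deg-0}) of the state space so that $\mu$ is still anti-self-adjoint under the pairing (\ref{eqn:pairing}) — note the pairing $([\alpha]_{\vec s},[\beta]_{-\vec s})$ pairs a class with negative-count $\#\{i:s_i<0\}$ against one with negative-count $\#\{i:s_i>0\}$ on the same stratum $D_{I_{\vec s}}$, and the two shifts sum to $\#I_{\vec s}$, a constant on each summand, so the anti-symmetry is preserved after absorbing this constant. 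Once this compatibility is checked, the proof is a direct invocation of \cite[Theorem~6]{Givental04}, and I would write it as such, with the verification of the adjointness of $\rho,\mu$ and the bracket relation as the only explicit computations.
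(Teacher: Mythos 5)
Your proposal takes essentially the same route as the paper: the paper's entire proof of Proposition \ref{prop-oper} is a direct appeal to \cite[Theorem 6]{Givental04}, with the inputs being Proposition \ref{prop-tangent} (from string, dilaton, TRR) and the infinitesimal-symplectic property of the $l_m$ (self-adjointness of $\rho$, anti-self-adjointness of $\mu$), which is exactly what you verify — and note your adjointness check for $\mu$ closes on the nose, since the two shifts summing to $\#I_{\vec s}$ cancel against $\dim_{\mathbb C}X-\dim_{\mathbb C}D_{I_{\vec s}}$, so no constant needs to be absorbed. Your additional discussion of the quantized operators annihilating $\mathcal D_{X_{D,\infty}}$ modulo the semiclassical part is not needed for the tangency (it is essentially the genus-zero Virasoro constraint, which the paper deduces afterwards from tangency), but it does not affect the correctness of the argument.
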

Therefore, $l_m$ are associated with Hamitonian functions on $\mathcal L$:
\[
f\mapsto \frac 12 \Omega(l_mf,f).
\]
We define the quantization of the quadratic monomials using the following standard rules:
\begin{align*}
    (q_{l;\vec s,k}q_{l';\vec s',k'})^{\wedge}&=q_{l;\vec s,k}q_{l';\vec s',k'}/\hbar, \\ 
    (q_{l;\vec s,k}p_{l';\vec s',k'})^{\wedge}&=q_{l;\vec s,k}\partial/\partial q_{l';\vec s',k'}, \\ 
    (p_{l;\vec s,k}p_{l';\vec s',k'})^{\wedge}&=\hbar \partial^2/\partial q_{l;\vec s,k}\partial q_{l';\vec s',k'}.
\end{align*}
Hence, we obtain a sequence of quantized operators
\[
L_m=\hat{l}_m.
\]

Then the following genus zero Virasoro constraints follow from the fact that $l_m$ is infinitesimal symplectic and Proposition \ref{prop-oper}.
\begin{prop}
For $m\geq -1$, we have the following identity
\[
\left[e^{-\mathcal F^0(\mathbf t)/\hbar} L_m e^{\mathcal F^0(\mathbf t)/\hbar}\right]_{\hbar^{-1}}=0,
\]
where $[\cdots]_{\hbar^{-1}}$ means taking the $\hbar^{-1}$-coefficient.
\end{prop}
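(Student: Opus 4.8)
The statement is the standard packaging of genus-zero Virasoro constraints in Givental's formalism, so the plan is to deduce it from the two facts already established: (i) each $l_m$ is infinitesimal symplectic (stated in the excerpt), and (ii) the vector field induced by $l_m$ is tangent to the Lagrangian cone $\mathcal L_{X_{D,\infty}}$ (Proposition \ref{prop-oper}). The first step is to recall the general principle from \cite[Section 1]{Givental04}: an infinitesimal symplectic transformation $A$ is tangent to $\mathcal L$ along $\mathcal L$ if and only if the Hamiltonian $h_A(f)=\tfrac12\Omega(Af,f)$ vanishes on $\mathcal L$, and in that case the quantization $\widehat A$ annihilates the total descendant potential up to the usual anomaly, $\widehat A\,\mathcal D_{X_{D,\infty}}=(\text{const})\cdot\mathcal D_{X_{D,\infty}}$. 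In genus zero one only needs the $\hbar^{-1}$-part of the corresponding statement about the WKB expansion, which is exactly the asserted identity.

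\textbf{Key steps.} First I would restrict attention to the dilaton-shifted coordinates $\mathbf q(z)=-z+\mathbf t(z)$ and express the condition that $\mathcal L_{X_{D,\infty}}$ is the graph $\mathbf p=d_{\mathbf q}\mathcal F^0$ in these coordinates. Second, I would compute the restriction to $\mathcal L$ of the Hamiltonian $h_{l_m}(\mathbf q,\mathbf p)=\tfrac12\Omega(l_m f,f)$, where $f=\mathbf q+\mathbf p$ and $\mathbf p=d_{\mathbf q}\mathcal F^0$; by Proposition \ref{prop-oper} this restriction is (at most) a constant, and since $l_m$ for $m\geq -1$ has no constant term in the relevant filtration it vanishes identically on $\mathcal L$. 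Third, I would unwind the quantization rules for $L_m=\widehat l_m$ applied to $e^{\mathcal F^0(\mathbf t)/\hbar}$: the quadratic differential operator acts on $e^{\mathcal F^0/\hbar}$ producing terms of order $\hbar^{-2}$, $\hbar^{-1}$, and $\hbar^{0}$; the $\hbar^{-2}$-term is automatically absent because $L_m$ has no $(p\,p)$-free pure product of $q$'s paired against itself that survives — more precisely, conjugating by $e^{\mathcal F^0/\hbar}$ and extracting the $\hbar^{-1}$-coefficient of $e^{-\mathcal F^0/\hbar}L_m e^{\mathcal F^0/\hbar}$ reproduces precisely $h_{l_m}$ evaluated along the graph $\mathbf p=d_{\mathbf q}\mathcal F^0$, which vanishes by step two. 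Finally I would record that string and dilaton equations (already available, see Section \ref{Sec:univ_eqns}) together with the TRR are what guarantee, via Proposition \ref{prop-tangent}, the geometry of the cone needed for Proposition \ref{prop-oper}, so no new input is required.

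\textbf{Main obstacle.} The only genuinely delicate point is the bookkeeping in step three: one must check that the passage ``tangency of the vector field $l_m$ to $\mathcal L$'' $\Longleftrightarrow$ ``$[e^{-\mathcal F^0/\hbar}L_m e^{\mathcal F^0/\hbar}]_{\hbar^{-1}}=0$'' is exactly the $\hbar^{-1}$-truncation of Givental's general quantization statement, with the dilaton shift inserted correctly and with the Poisson-bracket normalization of the quantization rules matching the symplectic form $\Omega$. This is entirely formal once one is careful about signs in $\Omega(f,g)=\mathrm{Res}_{z=0}(f(-z),g(z))\,dz$ and about which of $\mathbf q,\mathbf p$ the Hamiltonian is read off against; I would handle it by citing \cite[Theorem 6]{Givental04} verbatim for the all-genus anomaly and then simply extracting the $\hbar^{-1}$-coefficient, noting that at this order the anomaly constant contributes nothing. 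Everything else — infinitesimal symplecticity of $l_m$, the Virasoro bracket $\{l_m,l_n\}=(n-m)l_{m+n}$, and Proposition \ref{prop-oper} — has already been set up in the excerpt, so the proof is short.
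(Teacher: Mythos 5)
Your proposal is correct and follows essentially the same route as the paper, which likewise deduces the genus-zero constraints directly from the infinitesimal symplecticity of $l_m$ and Proposition \ref{prop-oper} (tangency of the induced vector fields to $\mathcal L_{X_{D,\infty}}$), the latter resting on \cite[Theorem 6]{Givental04}; the paper in fact gives even less detail than you do, simply asserting the identity follows from these two facts. Your extra bookkeeping identifying the $\hbar^{-1}$-coefficient of $e^{-\mathcal F^0/\hbar}L_m e^{\mathcal F^0/\hbar}$ with the Hamiltonian $\tfrac12\Omega(l_m f,f)$ restricted to the graph $\mathbf p=d_{\mathbf q}\mathcal F^0$ is the standard and correct way to fill in that omission.
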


\section{Intrinsic mirror symmetry}\label{sec:frob}

In this section, we apply invariants of $X_{D,\infty}$ and relative quantum cohomology $QH(X_{D,\infty})$ to study the intrinsic mirror symmetry of the Gross-Siebert program in our setting.

The Frobenius structure conjecture for log pairs $(X,D)$ was stated in the first arXiv version of \cite{GHK}. The Frobenius structure conjecture predicts that there is a commutative associative algebra associated to the pair $(X,D)$ and the spectrum of the algebra is mirror to $(X,D)$. The conjecture was proved in \cite{GS19} by explicitly defining all structure constants in terms of punctured Gromov-Witten invariants. It was proved for cluster log pairs in \cite{Mandel19} and for affine log Calabi-Yau varieties containing a torus in \cite{KY}. Our construction will also provide a commutative associative algebra associated to log pairs $(X,D)$ when $D$ is a simple normal crossing divisor.
We briefly review the conjecture and explain how our construction can be used to study the conjecture as well as the mirror construction in the Gross-Siebert program \cite{GS18} and \cite{GS19} in our setting. 

Let $D=D_1+\cdots+D_n$ and $S$ be the dual intersection complex of $D$. That is, $S$ is the simplicial complex with vertices $v_1,\ldots, v_n$ and simplices $\langle v_{i_1},\ldots, v_{i_p}\rangle$ corresponding to non-empty intersections $D_{i_1}\cap \cdots \cap D_{i_p}$. Let $B$ denote the cone over $S$ and $\Sigma$  be the induced
simplicial fan in $B$. Let $B(\mathbb Z)$ be the set of integer points of $B$. Let $QH_{\log}^0(X,D)$ be the degree $0$ subalgebra of the relative quantum cohomology ring $QH_{\log}^*(X,D)$. There is a bijection between points $p\in B(\mathbb Z)$ and prime fundamental classes $\vartheta_p\in QH_{\log}^0(X,D)$. 

Suppose we are given points $p_1,\ldots, p_m\in B_0(\mathbb Z)$, where $B_0=B\setminus \{0\}$. Each $p_i$ can be written as a linear combination of primitive generators $v_{ij}$ of rays in $\Sigma$:
\[
p_i=\sum_{j} m_{ij}v_{ij},
\]
where the ray generated by $v_{ij}$ corresponds to a divisor $D_{ij}$. 

We assume $(K_X+D)$ is nef or anti-nef. For $m\geq 2$, using the result of \cite{GS13} and \cite{AC14}, one can define the associated log Gromov-Witten invariant
\begin{align}\label{log-inv}
N^\beta_{p_1,\ldots, p_m,0}:=\int_{[\bM_{0,m+1}(X/D,\beta)]^{\on{vir}}}\on{ev}_0^*[pt]\cdot \psi_0^{m-2},
\end{align}
where $\bM_{0,m+1}(X/D,\beta)$ is the moduli stack of logarithmic stable maps which provides a compactification for the space of stable maps
\[
f:(C,x_0,x_1,\ldots,x_m)\rightarrow X
\]
such that $f_*[C]=\beta$, and $C$ meets $D_{ij}$ at $x_i$ with contact order $m_{ij}$ for each $i,j$ and contact order zero with $D$ at $x_0$. Note that no punctured invariants are involved at this point. 

The Frobenius structure conjecture can be partially rephrased as

\begin{conjecture}\label{conj-frob}
The coefficient of $\vartheta_0$ in the product $\vartheta_{p_1}\star\cdots \star\vartheta_{p_m}$ is 
\[
\sum_{\beta\in H_2(X)} N^\beta_{p_1,\ldots, p_m,0}q^\beta.
\]
\end{conjecture}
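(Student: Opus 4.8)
The plan is to reduce Conjecture \ref{conj-frob} to a statement about the structure constants $N^{\on{orb},\beta}_{p_1,\ldots,p_m,0}$ of the degree-zero relative quantum product defined from invariants of $X_{D,\infty}$, and then to match these with the log Gromov-Witten invariants $N^\beta_{p_1,\ldots,p_m,0}$ of (\ref{log-inv}). First I would make precise the dictionary between integer points $p\in B(\ZZ)$ and the theta basis elements $\vartheta_p$: a ray generator $v_{ij}$ corresponds to $D_{ij}$, so a point $p_i=\sum_j m_{ij}v_{ij}$ corresponds to the class $[\on{pt}]_{\vec s^i}\in\mathfrak H$ where $\vec s^i$ records the contact orders $m_{ij}$ along $D_{ij}$ (and $0$ elsewhere); here $\vartheta_0=[1]_{\vec 0}$ is the unit. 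With this identification, $QH^0(X_{D,\infty})$ is spanned by the $\vartheta_p$, and the restricted quantum product $\star$ has structure constants $N^{\on{orb},\beta}_{p_1,p_2,-r}$ as in Theorem \ref{thm-intro-stru-const}. The coefficient of $\vartheta_0$ in $\vartheta_{p_1}\star\cdots\star\vartheta_{p_m}$ is extracted by pairing with $\vartheta_0$, which by the definition of the pairing (\ref{eqn:pairing}) picks out the $\langle\cdots\rangle_{0,m+1,\beta}^{X_{D,\infty}}$ invariant with one extra insertion $[\on{pt}]_{\vec 0}$ carrying the appropriate power of $\bar\psi_0$ coming from iterating the product and using TRR/the string and dilaton equations to collapse the descendants — this is exactly the shape of the $\psi_0^{m-2}$ in (\ref{log-inv}).

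The key steps, in order: (1) set up the point-class/theta dictionary and verify $\vartheta_0$ is the unit using the degree-zero $3$-point computation and the divisor/string equations; (2) expand $\vartheta_{p_1}\star\cdots\star\vartheta_{p_m}$ via associativity (Theorem \ref{thm-intro-stru-const}, itself a consequence of WDVV in the nef/anti-nef case) and the definition of $\star$ through $\partial^3\Phi_0$, then take the $\vartheta_0$-coefficient and use TRR repeatedly to rewrite the resulting sum of products of lower-point invariants as a single $(m+1)$-point genus-zero invariant of $X_{D,\infty}$ with a $\bar\psi_0^{m-2}$ insertion at the point-marked point — this is the standard argument that turns a $\star$-product coefficient into a descendant invariant, identical in form to the one in ordinary quantum cohomology; (3) identify this $(m+1)$-point invariant of $X_{D,\infty}$ with the log invariant $N^\beta_{p_1,\ldots,p_m,0}$ of (\ref{log-inv}). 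For step (3), when $D$ is irreducible this is the main theorem of \cite{TY}; for general simple normal crossing $D$, since the insertions $\gamma_i=[\on{pt}]_{\vec s^i}$ have positive contact orders only (the $p_i\in B_0(\ZZ)$ give $m_{ij}>0$) and there are no punctured points, the $\bar\psi_0$ insertion at $x_0$ has trivial contact so one reduces, via the degeneration-to-the-normal-cone and localization analysis of Section \ref{sec:polynomiality} (constant-term extraction, Theorem \ref{thm:poly}), to matching with the log moduli space $\bM_{0,m+1}(X/D,\beta)$; the comparison of \cite{TY}, together with the footnote remark there extending to disjoint $D_i$'s, and the explicit computations of \cite{TY20b}, provide the needed identification in the range where both sides are defined.

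The main obstacle I expect is precisely step (3): in general the formal invariants of $X_{D,\infty}$ are not known to equal the punctured log invariants of \cite{ACGS}, and the paper itself flags this as open. The saving feature here is that \emph{no punctured points occur} in (\ref{log-inv}) — all contact orders $m_{ij}$ are positive and the extra marking $x_0$ has contact order zero — so one only needs the comparison with ordinary (non-punctured) log Gromov-Witten theory, which is exactly the regime where \cite{TY} applies. I would therefore argue that under the nef-or-anti-nef hypothesis the relevant $(m+1)$-point invariants of $X_{D,\infty}$ agree with the log invariants by reducing, component-by-component along $D$, to the irreducible/disjoint case handled in \cite{TY}, being careful that the virtual dimension constraint (\ref{vir-dim}) forces the insertion at $x_0$ to be $[\on{pt}]_{\vec 0}\bar\psi_0^{m-2}$ on both sides. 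A secondary, more bookkeeping-type obstacle is tracking the powers of $r_i$ in the constant-term limit of Definition \ref{defn:GW_inv} through the iterated product in step (2); but since all contact orders here are positive, $s_{i,-}=0$ for the relevant configurations and no normalization factors intervene, so this is routine. Once these are in place, combining (1)–(3) yields that the $\vartheta_0$-coefficient of $\vartheta_{p_1}\star\cdots\star\vartheta_{p_m}$ equals $\sum_{\beta} N^\beta_{p_1,\ldots,p_m,0}q^\beta$, which is the assertion of Conjecture \ref{conj-frob}, and hence of Theorem \ref{thm-frob}.
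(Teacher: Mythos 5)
Your step (3) is where the proposal breaks down, and it is also where you diverge from what the paper actually proves. Theorem \ref{thm-frob} does \emph{not} assert the conjecture with the log invariants $N^\beta_{p_1,\ldots,p_m,0}$ of (\ref{log-inv}) on the right-hand side; it asserts the conjecture ``for $QH^0(X_{D,\infty})$,'' i.e.\ with both the product $\star_{\on{orb}}$ and the invariants replaced by their formal-orbifold counterparts $N^{\on{orb},\beta}_{p_1,\ldots,p_m,0}$. The identification you propose — that the $(m+1)$-point genus-zero invariants of $X_{D,\infty}$ with positive contact orders equal the corresponding log Gromov--Witten invariants for a general simple normal crossing $D$ — is precisely the open comparison question the paper flags in the introduction (``they may not always be equal''), and it cannot be obtained by ``reducing component-by-component along $D$ to the irreducible/disjoint case handled in \cite{TY}'': when the $D_i$ genuinely intersect there is no such reduction, and the curves meeting several components at once are exactly where the two theories are not known to agree. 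So as written your argument proves a stronger statement than the paper's theorem by invoking an unproved comparison; the paper avoids this entirely by working internally with $N^{\on{orb},\beta}$ on both sides.

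Within the orbifold world your step (2) is the right skeleton, but the essential mechanism of the paper's proof is missing from your outline: after applying TRR to strip $\bar\psi^{m-2}$ from $N^{\on{orb},\beta}_{p_1,\ldots,p_m,0}$, the splitting sum runs over the \emph{entire} basis $\tilde T_{\vec s,k}$ of $\mathfrak H$, not just the degree-zero classes $[1]_s$, $s\in B(\ZZ)$, so it does not a priori compute the $\vartheta_0$-coefficient of the truncated product $\star_{\on{orb}}$. The paper's proof hinges on a virtual-dimension count using the nef (resp.\ anti-nef) hypothesis on $K_X+D$ together with the degree shift $\deg^0([\alpha]_{\vec s})=\deg(\alpha)/2+\#\{i:s_i<0\}$, which forces $|S_2|=0$ (resp.\ $|S_1|=m-2$), $\int_{\beta_i}[K_X+D]=0$, and the node insertion to be $[pt]_{-s}$ paired with $[1]_s$ for some $s\in B(\ZZ)$; only then does iterating TRR $(m-3)$ times produce exactly $\sum N^{\on{orb},\beta_1}_{p_1,p_2,-s_1}N^{\on{orb},\beta_2}_{s_1,p_3,-s_2}\cdots N^{\on{orb},\beta_{m-1}}_{s_{m-2},p_m,0}$, the $\vartheta_0$-coefficient by definition. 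You gesture at dimension constraints only for the insertion at $x_0$, which is not the relevant place. A minor slip: the dictionary is $\vartheta_p\leftrightarrow[1]_p$ (identity classes of $\mathfrak H_{\vec s}$ with $s_i\geq 0$), not $[pt]_{\vec s}$; point classes enter only through the dual output $[pt]_{-r}$ in the structure constants.
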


Conjecture \ref{conj-frob} will be rephrased in our language in the following sections.

\subsection{The mirror algebra}
Let $QH^0(X_{D,\infty})$ be the degree zero part of the relative quantum cohomology ring $QH(X_{D,\infty})$ in Section \ref{sec:rel-qh}. The degree zero part means the degree in (\ref{deg-0}) is zero. For a cohomology class $[\alpha]_{\vec s}\in \mathfrak H_{\vec s}$ of real degree $d$ to be of degree zero, we need
\[
\deg^{0}([\alpha]_{\vec s})=d/2+\#\{i:s_i<0\}=0.
\]
Therefore, we must have
\[
d=0, \text{ and } \#\{i:s_i<0\}=0.
\]
Hence, we have a canonical basis of $QH^0(X_{D,\infty})$ given by identity classes of $\mathfrak H_{\vec s}$ when $s_i\geq 0$ for all $i\in\{1,\ldots,n\}$.
So there is a bijection between such classes and integer points of $B(\mathbb Z)$.
Hence there is a bijection between this canonical basis of $QH^0(X_{D,\infty})$, denoted by $[1]_{p}$, and prime fundamental classes $\vartheta_p\in QH_{\log}^0(X,D)$. We can also use theta functions $\vartheta$ as the canonical basis of $QH^0(X_{D,\infty})$. 
Then we can write
\[
QH^0(X_{D,\infty})=\bigoplus_{p\in B(\mathbb Z)}\mathbb C[\![ \on{NE}(X)]\!]\vartheta_p
\]
as a free $\mathbb C[\![ \on{NE}(X)]\!]$-module.

One can replace the log invariant $N^\beta_{p_1,\ldots, p_m,0}$ defined in (\ref{log-inv}) by the corresponding invariant of $X_{D,\infty}$ (with the same input data), denoted by $N^{\on{orb},\beta}_{p_1,\ldots, p_m,0}$. The product $\vartheta_{p_1}\star \vartheta_{p_2}$ is simply replaced by the restriction of the small relative quantum product $[1]_{p_1}\star_{\on{sm}}[1]_{p_2}$ to $QH^0(X_{D,\infty})$. We denote this product by $\vartheta_{p_1}\star_{\on{orb}} \vartheta_{p_2}$. The structure constant $N^{\on{orb}}_{p_1,p_2,-r}$ is defined as the invariant of $X_{D,\infty}$ with two ``inputs'' with positive contact orders given by $p_1, p_2\in B(\mathbb Z)$, one ``output'' with negative contact order given by $-r$ such that $r\in B(\mathbb Z)$, and a point constraint for the punctured point. Namely,
\begin{align}\label{def-stru-const}
    N^{\on{orb},\beta}_{p_1,p_2,-r}=\langle [1]_{p_1},[1]_{p_2},[pt]_{-r}\rangle_{0,3,\beta}^{X_{D,\infty}}.
\end{align}
The corresponding punctured invariants are structure constants considered in \cite{GS19}\footnote{The notation in \cite{GS19} is $N^{\beta}_{p_1,p_2,r}$ which is slightly different from what we use here.}. Similarly, we define
\[
N^{\on{orb},\beta}_{p_1,\ldots, p_m,0}:=\left\langle [1]_{p_1},\ldots,[1]_{p_m},[pt]_{0}\bar{\psi}^{m-2}\right\rangle^{X_{D,\infty}}_{0,m+1,\beta}.
\]

In the next lemma (see also \cite[Lemma 2.1]{GS18} for the corresponding lemma for punctured invariants), we will show that the virtual dimension constraint implies that the number $N_{p_1,p_2,-r}^{\on{orb},\beta}=0$ unless $\int_\beta[K_X+D]=0$. Similarly, for $N^{\on{orb},\beta}_{p_1,\ldots, p_m,0}$, which will appear in Theorem \ref{thm-frob}.

\begin{lem}
 For $p,q,r\in B(\mathbb Z)$,
 \begin{align*}
    N^{\on{orb},\beta}_{p_1,p_2,-r}=0
\end{align*}
if $\int_\beta[K_X+D]\neq 0$. 
\end{lem}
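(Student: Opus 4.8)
The plan is to extract the vanishing directly from the virtual dimension constraint (\ref{vir-dim}) specialized to the invariant in (\ref{def-stru-const}). First I would write out the three insertions: the first two are $[1]_{p_1}$ and $[1]_{p_2}$, i.e. fundamental classes of $D_{I_{p_1}}$ and $D_{I_{p_2}}$ with contact orders recorded by the vectors $p_1,p_2\in B(\mathbb Z)$, all entries nonnegative, so $\deg^0([1]_{p_i})=0$; the third insertion is $[pt]_{-r}$, a point class in $D_{I_r}$ with contact vector $-r$ (all entries nonpositive), so by (\ref{deg-0}), $\deg^0([pt]_{-r})=\dim_{\mathbb C}D_{I_r}+\#\{i:r_i\neq 0\}=\dim_{\mathbb C}X$ (since $D_{I_r}$ has codimension $\#\{i:r_i\neq 0\}$ in $X$, assuming the intersection is nonempty, else the invariant is trivially zero). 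There are no $\bar\psi$ insertions, so $\sum a_j=0$, and $m=3$, $g=0$.

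Next I would substitute these into (\ref{vir-dim}). The left-hand side is $(1-0)(\dim_{\mathbb C}X-3)+3+\int_\beta c_1(T_X)-\int_\beta[D]=\dim_{\mathbb C}X+\int_\beta c_1(T_X)-\int_\beta[D]$. The right-hand side is $0+0+\dim_{\mathbb C}X+0=\dim_{\mathbb C}X$. Equating the two sides gives $\int_\beta c_1(T_X)-\int_\beta[D]=0$, i.e. $\int_\beta[-K_X-D]=0$, which is precisely $\int_\beta[K_X+D]=0$. Hence whenever $\int_\beta[K_X+D]\neq 0$ the dimension constraint fails and the invariant vanishes. I would also note at the outset the degenerate cases where $D_{I_{p_1}}$, $D_{I_{p_2}}$, or $D_{I_r}$ is empty, in which case the relevant $\mathfrak H_{\vec s}=0$ and the invariant is zero for trivial reasons, so we may assume all these strata are nonempty.

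The one genuine point of care — and the only place this could go wrong — is the computation $\deg^0([pt]_{-r})=\dim_{\mathbb C}X$. This uses that $[pt]_{-r}$ denotes the point class of the stratum $D_{I_r}$ (real dimension $2(\dim_{\mathbb C}X-|I_r|)$, so $p=q=\dim_{\mathbb C}X-|I_r|$ in the notation of (\ref{deg-0}) with $d=2p$), together with $\#\{i:(-r)_i<0\}=\#\{i:r_i>0\}=|I_r|$ since $r\in B(\mathbb Z)$ has all nonnegative entries. Adding $d/2=\dim_{\mathbb C}X-|I_r|$ and $|I_r|$ gives $\dim_{\mathbb C}X$, as claimed. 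I would present this bookkeeping explicitly since it is exactly the degree shift by ages that makes the statement clean. Everything else is immediate arithmetic, so I expect no real obstacle; the lemma is a direct consequence of (\ref{vir-dim}).
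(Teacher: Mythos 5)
Your proposal is correct and follows essentially the same route as the paper: both arguments compute $\deg^0([1]_{p_1})=\deg^0([1]_{p_2})=0$ and $\deg^0([pt]_{-r})=\dim_{\mathbb C}X$ (using that $r\in B(\mathbb Z)$ makes all contact orders at the third marking non-positive, so the shift in (\ref{deg-0}) exactly compensates the codimension of $D_{I_r}$), and then read off $\int_\beta[K_X+D]=0$ from the virtual dimension constraint (\ref{vir-dim}). Your version merely spells out the codimension bookkeeping and the trivial empty-stratum cases more explicitly than the paper does.
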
 
\begin{proof}
Since $r\in B(\mathbb Z)$, contact orders at the third marking, represented by $-r$, are non-positive with each divisor $D_i$. Then the definition of $\deg^0$ in (\ref{deg-0}) implies that
\[
\deg^0([pt]_{-r})=\dim_{\mathbb C} X.
\]
The virtual dimension constraint (\ref{vir-dim}) is
\begin{align*}
    \dim_{\mathbb C}X-3+3-\int_\beta[K_X+D]&=\deg^0([pt]_{-r}),\\
    \text{i.e. }\quad \int_\beta[K_X+D]&=0.
\end{align*}
\end{proof}

Note that the restriction of the quantum product may involve infinite sums. For the finiteness of the product rule, we will follow \cite{GS19}. Let $P\subset H_2(X)$ be a finitely generated submonoid, containing all effective curve classes and the group of invertible elements $P^\times$ of $P$ coincides with the torsion part of $H_2(X)$. Let $I\subset P$ be a monoid ideal such that $P\setminus I$ is finite. That is,
\begin{align}\label{S-I}
S_I:=\mathbb C[P]/I
\end{align}
is Artinian. Then one can define
\begin{align}\label{R-I}
R_I:=\bigoplus_{p\in B(\mathbb Z)}S_I\vartheta_p,
\end{align}
which is a free $S_I$-module.

Replacing punctured invariants by orbifold invariants, we write the product as
\begin{align}\label{equ-prod}
\vartheta_{p_1}\star_{\on{orb}} \vartheta_{p_2}=\sum_{\beta \in P\setminus I,r\in B(\mathbb Z)}N^{\on{orb},\beta}_{p_1,p_2,-r}q^\beta\vartheta_r.
\end{align}

\begin{thm}\label{thm-stru-const}
When $(K_X+D)$ is nef or anti-nef, the structure constants $N^{\on{orb},\beta}_{p_1,p_2,-r}$ define a commutative, associative $S_I$-algebra structure on $R_I$ with unit given by $\vartheta_0$.
\end{thm}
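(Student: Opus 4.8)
The plan is to check the three defining properties of an $S_I$-algebra in turn, using as input only the universal equations for $X_{D,\infty}$ from Section \ref{Sec:univ_eqns} (in particular WDVV (\ref{wdvv}) and the string equation (\ref{string-equ})), the virtual dimension constraint (\ref{vir-dim}), and the grading conventions (\ref{deg-0}). Before anything else I would note that (\ref{equ-prod}) defines a genuinely finite sum on $R_I$: for a genus-zero three-pointed invariant the contact orders must balance, $\big(p_1\big)_i+\big(p_2\big)_i+(-r)_i=\int_\beta[D_i]$, so $r$ is uniquely determined by $(p_1,p_2,\beta)$, and $P\setminus I$ is finite by hypothesis; $S_I$-bilinearity is built into the definition. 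Commutativity is immediate from $N^{\on{orb},\beta}_{p_1,p_2,-r}=\langle [1]_{p_1},[1]_{p_2},[pt]_{-r}\rangle_{0,3,\beta}^{X_{D,\infty}}$ and the independence of Gromov--Witten invariants on the ordering of the marked points. For the unit axiom I would expand $\vartheta_0\star_{\on{orb}}\vartheta_p=\sum_{\beta,r}N^{\on{orb},\beta}_{\vec 0,p,-r}q^\beta\vartheta_r$; for $\beta\neq 0$ the string equation (\ref{string-equ}) rewrites each term as a sum of two-pointed invariants carrying a $\bar\psi^{-1}$, hence zero, while for $\beta=0$ the standard normalization of the genus-zero degree-zero three-point invariant identifies it with the pairing $([1]_p,[pt]_{-r})=\delta_{p,r}$; thus $\vartheta_0\star_{\on{orb}}\vartheta_p=\vartheta_p$, and by commutativity $\vartheta_0$ is a two-sided unit.

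The real content is associativity. Expanding (\ref{equ-prod}) twice and using that $N^{\on{orb},\beta}_{a,b,-c}$ is symmetric in $a,b$, the coefficient of $q^\beta\vartheta_s$ in $(\vartheta_{p_1}\star_{\on{orb}}\vartheta_{p_2})\star_{\on{orb}}\vartheta_{p_3}$ is
\[
\sum_{\beta_1+\beta_2=\beta}\ \sum_{r\in B(\mathbb Z)} N^{\on{orb},\beta_1}_{p_1,p_2,-r}\ N^{\on{orb},\beta_2}_{r,p_3,-s},
\]
while the coefficient of $q^\beta\vartheta_s$ in $\vartheta_{p_1}\star_{\on{orb}}(\vartheta_{p_2}\star_{\on{orb}}\vartheta_{p_3})$ is
\[
\sum_{\beta_1+\beta_2=\beta}\ \sum_{r\in B(\mathbb Z)} N^{\on{orb},\beta_1}_{p_2,p_3,-r}\ N^{\on{orb},\beta_2}_{p_1,r,-s}.
\]
Rewriting each $N^{\on{orb}}$ as a three-point invariant and recalling that $[pt]_{-r}$ and $[1]_r$ are dual under the pairing (\ref{eqn:pairing}), these two expressions are, respectively, the left-hand and right-hand sides of WDVV (\ref{wdvv}) applied to the four insertions $[1]_{p_2},[1]_{p_1},[1]_{p_3},[pt]_{-s}$ (no descendants, $S_1=S_2=\emptyset$), \emph{except} that WDVV glues over the whole basis $\{\tilde T_{\vec s',k}\}$ of $\mathfrak H$ whereas the displayed sums only glue over the pairs $\{[pt]_{-r}\leftrightarrow [1]_r\}_{r\in B(\mathbb Z)}$. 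So associativity follows once I show that the remaining gluing terms in WDVV vanish.

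This is the one place the hypothesis on $K_X+D$ enters, and it is the main obstacle. A general gluing term on either side of WDVV has the form $\langle [1]_x,[1]_y,\tilde T_{\vec s',k}\rangle_{0,3,\beta_1}^{X_{D,\infty}}\cdot\langle \tilde T_{-\vec s'}^k,[1]_z,[pt]_{-s}\rangle_{0,3,\beta_2}^{X_{D,\infty}}$ with $x,y,z,s\in B(\mathbb Z)$ and $\beta_1,\beta_2$ effective. Applying (\ref{vir-dim}) to each factor, with $\deg^0([1]_p)=0$ for $p\in B(\mathbb Z)$ and $\deg^0([pt]_{-s})=\dim_{\mathbb C}X$ (the latter computed exactly as in the Lemma preceding this theorem), gives $\deg^0(\tilde T_{\vec s',k})=\dim_{\mathbb C}X-\int_{\beta_1}[K_X+D]$ and $\deg^0(\tilde T_{-\vec s'}^k)=-\int_{\beta_2}[K_X+D]$; moreover $\deg^0(\tilde T_{\vec s',k})+\deg^0(\tilde T_{-\vec s'}^k)=\dim_{\mathbb C}X$ with both summands nonnegative. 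If $K_X+D$ is nef then $\int_{\beta_2}[K_X+D]\ge 0$, forcing $\deg^0(\tilde T_{-\vec s'}^k)=0$; by (\ref{deg-0}) this means $\tilde T_{-\vec s'}^k$ is a degree-zero cohomology class in a nonnegative contact sector, i.e.\ $\tilde T_{-\vec s'}^k=[1]_r$ with $r\in B(\mathbb Z)$, so $\tilde T_{\vec s',k}=[pt]_{-r}$. If $K_X+D$ is anti-nef then $\int_{\beta_1}[K_X+D]\le 0$, forcing $\deg^0(\tilde T_{\vec s',k})=\dim_{\mathbb C}X$, hence again $\deg^0(\tilde T_{-\vec s'}^k)=0$ and the same conclusion. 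In either case every surviving WDVV gluing term is indexed by a pair $[pt]_{-r}\leftrightarrow [1]_r$ with $r\in B(\mathbb Z)$, so WDVV collapses precisely to the equality of the two displayed sums, which is associativity. The only genuinely subtle point—and the step I would spend the most care on—is this dimension-counting reduction of the WDVV gluing onto the degree-zero subspace; everything else is routine bookkeeping with (\ref{equ-prod}).
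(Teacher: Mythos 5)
Your proposal is correct and follows essentially the same route as the paper: commutativity from symmetry of the invariants, the unit axiom from the string equation, and associativity by showing that under the nef/anti-nef hypothesis on $K_X+D$ the virtual dimension constraint (\ref{vir-dim}) forces every WDVV gluing insertion to be of the form $[pt]_{-r}\leftrightarrow [1]_r$ with $r\in B(\mathbb Z)$, so that WDVV collapses to the truncated product rule. Your packaging of the dimension count via $\deg^0(\tilde T_{\vec s',k})+\deg^0(\tilde T_{-\vec s'}^k)=\dim_{\mathbb C}X$ applied to both factors is just a mild reorganization of the paper's case-by-case estimate on a single factor, not a different argument.
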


We will refer to $R_I$ as \emph{mirror algebra}.

\begin{proof}

The finiteness of the product rule follows directly from the definition of the structure constants $N^{\on{orb},\beta}_{p_1,p_2,-r}$ and the fact that $P\setminus I$ is finite.

The commutativity is straightforward. It follows from the fact that the structure constants are Gromov-Witten invariants of $X_{D,\infty}$ which satisfy
\[
N^{\on{orb},\beta}_{p_1,p_2,-r}=N^{\on{orb},\beta}_{p_2,p_1,-r}.
\]

The fact that the class $\vartheta_0$ is the unit can be rephrased in terms of the invariants $N^{\on{orb},\beta}_{p_1,p_2,-r}$ as follows. For $p\in B(\mathbb Z)$,
\[
\begin{split}
    N^{\on{orb},\beta}_{0,p,-r}=
    \begin{cases}
    0 & \beta \neq 0 \text{ or } p\neq r,\\
    1 & \beta=0, p=r.
    \end{cases}
\end{split}
\]
But this is a direct consequence of the string equation (\ref{string-equ}).

The associativity for the relative quantum product follows from the WDVV equation (\ref{wdvv}). However, as mentioned in \cite{GS19}, the product rule that we consider here is only a truncation (restriction) of the actual product rule for relative quantum cohomology, so the associativity is not preserved in general. Here comes the assumption that $\pm(K_X+D)$ is nef. Under this assumption, we will show that the associativity is preserved.

For the associativity, we need to prove that
\[
(\vartheta_{p_1}\star_{\on{orb}} \vartheta_{p_2})\star_{\on{orb}} \vartheta_{p_3}=\vartheta_{p_1}\star_{\on{orb}} (\vartheta_{p_2}\star_{\on{orb}} \vartheta_{p_3}).
\]
Since 
\begin{align*}
(\vartheta_{p_1}\star_{\on{orb}} \vartheta_{p_2})\star_{\on{orb}} \vartheta_{p_3}&=\left(\sum_{\beta_1\in P\setminus I,s\in B(\mathbb Z)}N^{\on{orb},\beta_1}_{p_1,p_2,-s}q^{\beta_1}\vartheta_s\right)\star_{\on{orb}}\vartheta_{p_3}\\
&=\sum_{\beta_1,\beta_2\in P\setminus I,s\in B(\mathbb Z)}N^{\on{orb},\beta_1}_{p_1,p_2,-s}N^{\on{orb},\beta_2}_{s,p_3,-r}q^{\beta_1+\beta_2}\vartheta_r
\end{align*}
and
\begin{align*}
\vartheta_{p_1}\star_{\on{orb}} (\vartheta_{p_2}\star_{\on{orb}} \vartheta_{p_3})&=\vartheta_{p_1}\star_{\on{orb}}\left(\sum_{\beta_1\in P\setminus I,s\in B(\mathbb Z)}N^{\on{orb},\beta_1}_{p_2,p_3,-s}q^{\beta_1}\vartheta_s\right)\\
&=\sum_{\beta_1,\beta_2\in P\setminus I,s\in B(\mathbb Z)}N^{\on{orb},\beta_1}_{p_2,p_3,-s}N^{\on{orb},\beta_2}_{s,p_1,-r}q^{\beta_1+\beta_2}\vartheta_r.
\end{align*}
Therefore, we just need to prove
\begin{align}\label{equ-assoc}
\sum_{\substack{\beta_1+\beta_2=\beta\in P\setminus I\\s\in B(\mathbb Z)}}N^{\on{orb},\beta_1}_{p_1,p_2,-s}N^{\on{orb},\beta_2}_{s,p_3,-r}=\sum_{\substack{\beta_1+\beta_2=\beta\in P\setminus I\\s\in B(\mathbb Z)}}N^{\on{orb},\beta_1}_{p_2,p_3,-s}N^{\on{orb},\beta_2}_{s,p_1,-r},
\end{align}
where each sum is over all possible splitting of $\beta_1+\beta_2=\beta$ and all $s\in B(\mathbb Z)$. However, this is {\em not} the WDVV equation (\ref{wdvv})! The WDVV equation is of the following form with extra terms in each sum. We need to use the bracket notation to write it down:
\begin{align}\label{wdvv-frob}
   &\sum_{\substack{\beta_1+\beta_2=\beta\in H_2(X)\\ \vec s\in (\mathbb Z)^n,k}} \left\langle [1]_{p_1},[1]_{p_2},\tilde{T}_{-\vec s,k}\right\rangle_{0,3,\beta_1} \left\langle  \tilde{T}_{\vec s}^k, [1]_{p_3}, [pt]_{-r}\right\rangle_{0,3,\beta_2}\\
  \notag = &  \sum_{\substack{\beta_1+\beta_2=\beta\in H_2(X)\\ \vec s\in (\mathbb Z)^n,k}} \left\langle [1]_{p_2},[1]_{p_3},\tilde{T}_{-\vec s,k}\right\rangle_{0,3,\beta_1} \left\langle  \tilde{T}_{\vec s}^k, [1]_{p_1}, [pt]_{-r}\right\rangle_{0,3,\beta_2},
\end{align}
where $p_1,p_2,r\in B(\mathbb Z)$; each sum is over all splittings of $\beta_1+\beta_2=\beta$, all indices $\vec s, k$ of basis. We will see that extra terms in the WDVV equation vanish under the assumption that $\pm(K_X+D)$ is nef.

When $-K_X-D$ is nef, we consider the invariant $\left\langle [1]_{p_1},[1]_{p_2},\tilde{T}_{-\vec s,k}\right\rangle_{0,3,\beta_1}$ in (\ref{wdvv-frob}). The virtual dimension constraint (\ref{vir-dim}) becomes
\begin{align}\label{vir-dim-nef}
\notag\dim_{\mathbb C}X-3+3+\int_{\beta_1} [-K_X-D]=\deg^0(\tilde{T}_{-\vec s,k})\\
\dim_{\mathbb C}X+\int_{\beta_1} [-K_X-D]=\deg^0(\tilde{T}_{-\vec s,k}).
\end{align}
Let $\deg([\alpha])$ be the real degree of $\alpha\in H^*(D_I)$ for $I\subseteq \{1,\ldots,n\}$. Recall that 
\[
\deg^0(\tilde{T}_{-\vec s,k})=\deg(\tilde{T}_{-\vec s,k})/2+\#\{i:-s_i<0\}.
\]
Since
\[
\deg(\tilde{T}_{-\vec s,k})/2\leq \dim_{\mathbb C}D_{I_{\vec s}}\leq \dim_{\mathbb C}X-\#\{i:-s_i\neq 0\},
\]
we have 
\[
\deg^0(\tilde{T}_{-\vec s,k})\leq \dim_{\mathbb C}X-\#\{i:-s_i\neq 0\}+\#\{i:-s_i<0\}= \dim_{\mathbb C}X-\#\{i:-s_i>0\}.
\]
Therefore, if $\#\{i:-s_i>0\}>0$, we must have
\[
\deg^0(\tilde{T}_{-\vec s,k})<\dim_{\mathbb C}X.
\]
On the other hand, $-K_X-D$ is nef implies that
\[
\dim_{\mathbb C}X+\int_{\beta_1} ([-K_X-D]) \geq \dim_{\mathbb C}X.
\]
Hence, the virtual dimension constraint (\ref{vir-dim-nef}) does not hold unless $\#\{i:-s_i>0\}=0$, in other words, $-s_i\leq 0$ for all $i\in\{1,\ldots,n\}$. Furthermore, we must have
\[
\tilde{T}_{-\vec s,k}=[pt]_{-s}, \text{ for some } s\in B(\mathbb Z).
\]
It implies that LHS of (\ref{equ-assoc})= LHS of (\ref{wdvv-frob}) modulo $I$. The same argument implies that RHS of (\ref{equ-assoc})= RHS of (\ref{wdvv-frob}) modulo $I$. This completes the case when $-K_X-D$ is nef.

When $K_X+D$ is nef, we consider the invariant $\left\langle  \tilde{T}_{\vec s}^k, [1]_{p_3}, [pt]_{-r}\right\rangle_{0,3,\beta_2}$ in (\ref{wdvv-frob}). The virtual dimension constraint (\ref{vir-dim}) becomes
\[
\dim_{\mathbb C}X-\int_{\beta_2} [K_x+D]=\deg^0(\tilde{T}_{\vec s}^k)+\deg^0([pt]_{-r}).
\]
Since $r\in B(\mathbb Z)$, contact orders represented by $-r$ are non-positive. The definition of $\deg^0$ in (\ref{deg-0}) implies that
\[
\deg^0([pt]_{-r})=\dim_{\mathbb C} X.
\]
Then $-\int_{\beta_2} [K_x+D]\leq 0$ implies that
\[
\deg^0(\tilde{T}_{\vec s}^k)\leq 0.
\]
Therefore, we must have
\[
\deg^0(\tilde{T}_{\vec s}^k):=\deg(\tilde{T}_{\vec s}^k)+\#\{i:s_i<0\}=0.
\]
Hence, $\#\{i:s_i<0\}=0$ and
\[
\tilde{T}_{\vec s}^k=[1]_{s}, \text{ for some } s\in B(\mathbb Z).
\]
So LHS of (\ref{equ-assoc})= LHS of (\ref{wdvv-frob}) modulo $I$. The same argument implies that RHS of (\ref{equ-assoc})=RHS of (\ref{wdvv-frob}) modulo $I$. This completes the case when $K_X+D$ is nef, hence, completes the proof of the theorem.
\end{proof}

\subsection{The Frobenius structure conjecture}

\begin{thm}\label{thm-frob}
When $(K_X+D)$ is nef or anti-nef, Conjecture \ref{conj-frob} holds for $QH^0(X_{D,\infty})$.
\end{thm}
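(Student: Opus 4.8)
The plan is to deduce Theorem \ref{thm-frob} directly from the $m$-pointed generalization of the WDVV/TRR machinery already established, exactly in the spirit of the proof of Theorem \ref{thm-stru-const}. The statement to prove is that the coefficient of $\vartheta_0$ in the $\star_{\on{orb}}$-product $\vartheta_{p_1}\star_{\on{orb}}\cdots\star_{\on{orb}}\vartheta_{p_m}$ equals $\sum_{\beta}N^{\on{orb},\beta}_{p_1,\ldots,p_m,0}q^\beta$, where the right-hand invariant carries the $\bar\psi_0^{m-2}$ insertion at the output point. First I would set up the induction on $m$: the base case $m=2$ is the definition (\ref{def-stru-const}) together with the observation that $\vartheta_0$-coefficient picks out $-r$ with $r=0$, and the dilaton/string normalization makes $\psi_0^{m-2}=\psi_0^0$ trivial. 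For the inductive step, I would expand $(\vartheta_{p_1}\star_{\on{orb}}\cdots\star_{\on{orb}}\vartheta_{p_{m-1}})\star_{\on{orb}}\vartheta_{p_m}$ using (\ref{equ-prod}), extract the $\vartheta_0$-coefficient, and recognize the resulting sum over intermediate classes $\vartheta_s$ and curve-class splittings as a sum of products of lower invariants.

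The key step is then to match this sum against the definition of $N^{\on{orb},\beta}_{p_1,\ldots,p_m,0}$ via the topological recursion relation (\ref{trr}) applied to the $\bar\psi_0^{m-2}$ class: writing $\bar\psi_0^{m-2}=\bar\psi_0\cdot\bar\psi_0^{m-3}$ and applying TRR repeatedly peels off one marked point at a time, producing exactly the nested structure-constant sums that appear in the iterated product. This is the standard argument (see \cite{GHK, GS19}) that the $\vartheta$-product defined by structure constants reproduces the genus-zero invariants with a $\psi^{m-2}$ insertion; the only thing to check is that all the intermediate insertions $\tilde T_{\vec s,k}$ that appear in (\ref{trr}) collapse, modulo the ideal $I$, to the canonical basis elements $[1]_s$ or $[pt]_{-s}$ with $s\in B(\mathbb Z)$. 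This is precisely the dimension-counting argument already carried out in the proof of Theorem \ref{thm-stru-const}: under the hypothesis $\pm(K_X+D)$ nef, the virtual dimension constraint (\ref{vir-dim}) forces the classes propagating through each node to be the pure degree-$0$ classes, so no ``extra terms'' survive and the TRR reduction stays inside $QH^0(X_{D,\infty})$.

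I would organize the write-up as follows: (i) reduce to showing $\langle[1]_{p_1},\ldots,[1]_{p_m},[pt]_0\bar\psi^{m-2}\rangle_{0,m+1,\beta}^{X_{D,\infty}}$ equals the iterated-product coefficient; (ii) apply TRR to the marked point carrying $\bar\psi_0^{m-2}$ to reduce the power by one, at the cost of a sum over splittings; (iii) invoke the nef hypothesis and (\ref{vir-dim}) to show the propagating class is forced into the degree-zero canonical basis, hence the sum over $\tilde T_{\vec s,k}$ is really a sum over $r\in B(\mathbb Z)$ of terms $N^{\on{orb},\beta_1}_{\ldots,-r}$ times a lower invariant; (iv) close the induction. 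The $q=0$ and degree-zero terms need separate bookkeeping so that the $\vartheta_0$-unit axiom (already verified via the string equation in Theorem \ref{thm-stru-const}) handles the edge cases where some $\beta_i=0$.

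The main obstacle I anticipate is purely combinatorial: verifying that the TRR iteration produces \emph{exactly} the same nested sum, with the same multiplicities and the same curve-class bookkeeping modulo $I$, as the iterated $\star_{\on{orb}}$-product, with no sign or symmetry-factor discrepancy. In particular one must be careful that the associativity from Theorem \ref{thm-stru-const} is genuinely needed to make ``the iterated product'' well-defined before one can even state what its $\vartheta_0$-coefficient is. Once associativity is in hand, the matching is a formal consequence of TRR, but writing it cleanly requires a careful induction rather than a one-line appeal. Everything else — finiteness, the role of $S_I$ and $R_I$, the dimension constraints — is inherited verbatim from the proof of Theorem \ref{thm-stru-const}.
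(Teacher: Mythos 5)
Your proposal follows essentially the same route as the paper: reduce to the identity for $\left\langle [1]_{p_1},\ldots,[1]_{p_m},[pt]_0\bar\psi^{m-2}\right\rangle$, peel off the $\bar\psi$ powers by repeated application of TRR (\ref{trr}), and use the virtual dimension constraint (\ref{vir-dim}) under the nef/anti-nef hypothesis to force the propagating classes into $[1]_s$ or $[pt]_{-s}$ with $s\in B(\mathbb Z)$, so the nested sums match the iterated $\star_{\on{orb}}$-product coefficient of $\vartheta_0$. The only cosmetic difference is that you phrase the iteration as an induction on $m$, while the paper applies the TRR reduction $(m-3)$ times directly; note also that the dimension count in this step additionally forces how the remaining markings distribute between the two TRR factors (e.g.\ $|S_2|=0$ in the $-K_X-D$ nef case), which is the slight refinement beyond the count in Theorem \ref{thm-stru-const} that your step (iii) would need to carry out.
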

\begin{proof}
The case of $m=2$ directly follows from the definition of our structure constants $N_{p_1,p_2,0}^{\on{orb},\beta}$. The case of $m\geq3$ can be proved using TRR (\ref{trr}). 

We need to show that
\[
\sum_{\beta\in H_2(X)} N^{\on{orb},\beta}_{p_1,\ldots, p_m,0}q^\beta
\]
coincides with the coefficient of $\vartheta_0$ in the product $\vartheta_{p_1}\star_{\on{orb}} \cdots \star_{\on{orb}}\vartheta_{p_m}$. Recall that
\[
N^{\on{orb},\beta}_{p_1,\ldots, p_m,0}:=\left\langle [1]_{p_1},\ldots,[1]_{p_m},[pt]_{0}\bar{\psi}^{m-2}\right\rangle^{X_{D,\infty}}_{0,m+1,\beta}.
\]
Similar to absolute Gromov-Witten theory, TRR (\ref{trr}) can be used to remove the descendant class $\bar{\psi}$. We have
\begin{align}\label{trr-frob}
    N^{\on{orb},\beta}_{p_1,\ldots, p_m,0}=\sum \left\langle [pt]_0\bar{\psi}^{m-3},\prod_{j\in S_1}[1]_{p_j},\tilde{T}_{\vec s,k}\right\rangle \left\langle \tilde{T}_{-\vec s}^k, [1]_{p_1}, [1]_{p_2}, \prod_{j\in S_2}[1]_{p_j}\right\rangle,
\end{align}
where the sum is over all splittings of $\beta_1+\beta_2=\beta$, all indices $\vec s, k$ of basis, and all splittings of disjoint sets $S_1$, $S_2$ with $S_1\cup S_2=\{3,\ldots,m\}$. We will show that some terms in (\ref{trr-frob}) vanish and the RHS of (\ref{trr-frob}) coincide with the coefficient of $\vartheta_0$ of the product.

When $-K_X-D$ is nef, we consider the invariant $\left\langle \tilde{T}_{-\vec s}^k, [1]_{p_1}, [1]_{p_2}, \prod_{j\in S_2}[1]_{p_j}\right\rangle$ in (\ref{trr-frob}). The virtual dimension constraint (\ref{vir-dim}) is
\begin{align}\label{vir-dim-nef-trr}
\dim_{\mathbb C}X+|S_2|+\int_{\beta_2}[-K_X-D]=\deg^0(\tilde{T}_{-\vec s}^k).
\end{align}
Note that
\begin{align*}
\deg^0(\tilde{T}_{-\vec s}^k):=&\deg(\tilde{T}_{-\vec s}^k)+\#\{i:-s_i<0\}\\
\leq & \dim_{\mathbb C}X-\#\{i:-s_i\neq 0\}+\#\{i:-s_i<0\}\\
 = &\dim_{\mathbb C}X-\#\{i:-s_i>0\}\\
 \leq &\dim_{\mathbb C }X.
\end{align*}
On the other hand, $-K_X-D$ is nef implies 
\[
\dim_{\mathbb C}X+|S_2|+\int_{\beta_2}[-K_X-D]\geq \dim_{\mathbb C}X.
\]
Therefore, the equality (\ref{vir-dim-nef-trr}) does not hold unless
\[
|S_2|=0, \quad \int_{\beta_2}[-K_X-D]=0, \quad \#\{i:-s_i>0\}=0
\]
and
\[
\tilde{T}_{-\vec s}^k=[pt]_{-s}, \text{ for some } s\in B(\mathbb Z). 
\]
Therefore (\ref{trr-frob}) becomes
\begin{align*}
    N^{\on{orb},\beta}_{p_1,\ldots, p_m,0}=&\sum_{\beta_1+\beta_2=\beta\in H_2(X),s\in B(\mathbb Z)} \left\langle [pt]_0\bar{\psi}^{m-3},[1]_{p_3},\ldots,[1]_{p_m},[1]_{s}\right\rangle \left\langle [pt]_{-s}, [1]_{p_1}, [1]_{p_2}\right\rangle\\
    =& \sum_{\beta_1+\beta_2=\beta\in H_2(X),s\in B(\mathbb Z)} N^{\on{orb},\beta_1}_{s,p_3,\ldots, p_m,0}N^{\on{orb},\beta_2}_{p_1,p_2,-s}.
\end{align*}
Repeat this process $(m-3)$-times, we get
\[
N^{\on{orb},\beta}_{p_1,\ldots, p_m,0}=\sum_{\sum_{i=1}^{m-1} \beta_i=\beta\in H_2(X),s_i\in B(\mathbb Z)}N^{\on{orb},\beta_2}_{p_1,p_2,-s_1}N^{\on{orb},\beta_2}_{s_1,p_3,-s_2}\cdots N^{\on{orb},\beta_{m-1}}_{s_{m-2},p_m,0}.
\]
The right-hand side is precisely the coefficient of $\vartheta_0$ of $\vartheta_{p_1}\star_{\on{orb}} \cdots \star_{\on{orb}}\vartheta_{p_m}$ by definition. This completes the case when $-K_X-D$ is nef.

When $K_X+D$ is nef, we consider the invariant $\left\langle [pt]_0\bar{\psi}^{m-3},\prod_{j\in S_1}[1]_{p_j},\tilde{T}_{\vec s,k}\right\rangle$ in (\ref{trr-frob}). The virtual dimension constraint (\ref{vir-dim}) is
\begin{align}\label{vir-dim-trr-anti-nef}
\dim_{\mathbb C}X-3+2+|S_1|+\int_{\beta_1}[-K_X-D]=\dim_{\mathbb C}X+m-3+\deg^{0}(\tilde{T}_{\vec s, k}).
\end{align}
Since $|S_1|\leq m-2$ and $K_X+D$ is nef, we have
\[
\dim_{\mathbb C}X-3+2+|S_1|+\int_{\beta_1}[-K_X-D]\leq \dim_{\mathbb C}X-1+m-2=\dim_{\mathbb C}X+m-3.
\]
On the other hand, 
\[
\dim_{\mathbb C}X+m-3+\deg^{0}(\tilde{T}_{\vec s, k})\geq \dim_{\mathbb C}X+m-3.
\]
Therefore, the equality (\ref{vir-dim-trr-anti-nef}) does not hold unless
\[
|S_1|=m-2, \quad, \int_{\beta_1}[-K_X-D]=0, \quad \#\{i:s_i<0\}=0,
\]
and
\[
\tilde{T}_{\vec s, k}=[1]_{s}, \text{ for some }s\in B(\mathbb Z).
\]
Hence (\ref{trr-frob}) becomes
\begin{align*}
    N^{\on{orb},\beta}_{p_1,\ldots, p_m,0}=&\sum_{\beta_1+\beta_2=\beta\in H_2(X),s\in B(\mathbb Z)} \left\langle [pt]_0\bar{\psi}^{m-3},[1]_{p_3},\ldots,[1]_{p_m},[1]_{s}\right\rangle \left\langle [pt]_{-s}, [1]_{p_1}, [1]_{p_2}\right\rangle\\
    =& \sum_{\beta_1+\beta_2=\beta\in H_2(X),s\in B(\mathbb Z)} N^{\on{orb},\beta_1}_{s,p_3,\ldots, p_m,0}N^{\on{orb},\beta_2}_{p_1,p_2,-s}.
\end{align*}
We again repeat this process $(m-3)$-times to have
\[
N^{\on{orb},\beta}_{p_1,\ldots, p_m,0}=\sum_{\sum_{i=1}^{m-1} \beta_i=\beta\in H_2(X),s_i\in B(\mathbb Z)}N^{\on{orb},\beta_2}_{p_1,p_2,-s_1}N^{\on{orb},\beta_2}_{s_1,p_3,-s_2}\cdots N^{\on{orb},\beta_{m-1}}_{s_{m-2},p_m,0},
\]
where the right-hand side is precisely the coefficient of $\vartheta_0$ of $\vartheta_{p_1}\star_{\on{orb}} \cdots \star_{\on{orb}}\vartheta_{p_m}$. This completes the proof of the case when $K_X+D$ is nef, hence completes the proof of the theorem. 
\end{proof}

\subsection{Mirror construction}\label{sec:mirror-constr}
With the mirror algebra $R_I$, one can construct the mirror following the Gross-Siebert program. We will follow the construction in \cite{GS18} and \cite{GS19}. 

Let $(X,D)$ be a log Calabi-Yau pair and $B$ be pure-dimensional with $\dim_{\mathbb R}B=\dim_{\mathbb C} X$. One can define families of schemes
\[
\on{Spec}R_I\rightarrow \on{Spec} S_I.
\]
Taking the direct limit of this families of schemes, one obtains a formal flat family of affine schemes
\begin{align}\label{mirror-log-cy}
    \check{\mathfrak X}\rightarrow \on{Spf}\widehat{\mathbb C[P]},
\end{align}
where $\widehat{\mathbb C[P]}$ is the completion of $\mathbb C [P]$ with respect to the maximal ideal $P\setminus P^\times$. The family $(\ref{mirror-log-cy})$ can be viewed as the mirror family to $X\setminus D$.

Next, we consider mirrors to a degeneration of Calabi-Yau manifolds 
\[
g: X\rightarrow S,
\]
so that $D=g^{-1}(0)$ set-theoretically. One can define the ring
\[
\widehat{R}=\oplus_{p\in B(\mathbb Z)}\widehat{\mathbb C[P]}\vartheta_p.
\]
The multiplication will always be a finite sum as mentioned in \cite[Construction 1.19]{GS19}. Furthermore, $\widehat{R}$ carries an associative $\widehat{\mathbb C[P]}$-algebra structure with a natural grading. When $\dim_{\mathbb R}B=\dim_{\mathbb C} X$, the mirror family is defined to be the flat family
\[
\check{\mathcal X}=\on{Proj} \widehat{R}\rightarrow \on{Spec} \widehat{\mathbb C[P]}.
\]

\begin{rem}
\cite{GS19} actually described the mirrors in a more general setting. One can also try to construct mirrors following \cite{GS19} using the more general setting, but with invariants of $X_{D,\infty}$. 
We do not repeat these constructions here and refer readers to \cite{GS19} for more details. An interesting question to ask is that if our construction agrees with the construction in \cite{GS19}. We plan to study this question in the future.
\end{rem}

\section{A partial cohomological field theory}\label{sec:CohFT}

In this section, we show that the formal Gromov-Witten theory of infinite root stacks form a partial cohomological field theory (partial CohFT) in the sense of \cite{LRZ}. This generalizes the result of \cite[Section 3.5]{FWY19} to infinite root stacks with simple normal crossing divisors. We first provide a brief review of the CohFT.

Let $\overline {M}_{g,m}$ be the moduli space of genus $g$, $m$-pointed stable curves. We assume that $2g-2+m>0$. There are several canonical morphisms between moduli space $\overline{M}_{g,m}$ of stable curves. 
\begin{itemize}
    \item Forgetful morphisms
    \[
    \pi:\overline{M}_{g,m+1}\rightarrow \overline{M}_{g,m}
    \]
    obtained by forgetting the last marking of $(m+1)$-pointed, genus $g$ curves in $\overline{M}_{g,m+1}$.
    \item Morphisms of gluing the loops
    \[
    \rho_l: \overline{M}_{g,m+2}\rightarrow \overline{M}_{g+1,m}
    \]
    obtained by identifying the last two markings of the $(m+2)$-pointed, genus $g$ curves in $ \overline{M}_{g,m+2}$.
    \item Morphisms of gluing the trees
    \[
    \rho_t:\overline{M}_{g_1,m_1+1}\times \overline{M}_{g_2,m_2+1}\rightarrow \overline{M}_{g_1+g_2,m_1+m_2}
    \]
    obtained by identifying the last markings of separate pointed curves in $\overline{M}_{g_1,m_1+1}\times \overline{M}_{g_2,m_2+1}$.
\end{itemize}

The state space $H$ is a graded vector space with a non-degenerate pairing $\langle -,-\rangle$ and a distinguished element $1\in H$. Given a basis $\{e_i\}$, let $\eta_{jk}=\langle e_j,e_k\rangle$ and $(\eta^{jk})=(\eta_{jk})^{-1}$.

A cohomological field theory (CohFT) is a collection of homomorphisms
\[
\Omega_{g,m}: H^{\otimes m}\rightarrow H^*(\overline{M}_{g,m},\mathbb Q)
\]
satisfying the following axioms:
\begin{itemize}
    \item The element $\Omega_{g,m}$ is invariant under the natural action of symmetric group $S_m$.
    \item For all $\alpha_i\in H$, $\Omega_{g,m}$ satisfies
    \[
    \Omega_{g,m+1}(\alpha_1,\ldots,\alpha_m,1)=\pi^*\Omega_{g,m}(\alpha_1,\ldots,\alpha_m).
    \]
    \item The splitting axiom:
    \begin{align*}
    &\rho^*_t\Omega_{g_1+g_2,m_1+m_2}(\alpha_1,\ldots,\alpha_{m_1+m_2})=\\
    &\sum_{j,k}\eta^{jk}\Omega_{g_1,m_1}(\alpha_1,\ldots,\alpha_{m_1},e_j)\otimes \Omega_{g_2,m_2}(\alpha_{m_1+1},\ldots,\alpha_{m_1+m_2},e_k),
    \end{align*}
    for all $\alpha_i\in H$.
    \item The loop axiom:
    \[
    \rho_l^*\Omega_{g+1,m}(\alpha_1,\ldots,\alpha_m)=\sum_{j,k}\eta^{jk}\Omega_{g,m+2}(\alpha_1,\ldots,\alpha_m,e_j,e_k),
    \]
    for all $\alpha_i\in H$. In addition, the equality
    \[
    \Omega_{0,3}(v_1,v_2,1)=\langle v_1,v_2\rangle
    \]
    holds for all $v_1,v_2\in H$.
\end{itemize}

\begin{defn}[\cite{LRZ}, Definition 2.7]
If the collection $\{\Omega_{g,m}\}$ satisfies all the axioms except for the loop axiom, we call it a {\em partial CohFT}.
\end{defn}
We also refer to \cite[Section 3]{BR} for more discussions of infinite rank partial CohFT.

Recall that, for Gromov-Witten theory of infinite root stacks, the ring of insertions is $\mathfrak H$ defined in Section \ref{sec:state-space}. Let
\[
\pi: \bM_{g,m}(X,\beta)\times_{X^m}\left(D_{I_{\vec s^1}}\times\cdots \times D_{I_{\vec s^m}}\right)\rightarrow \bM_{g,m}
\]
be the forgetful map.

\begin{defn}
Given elements $[\alpha_1],\ldots,[\alpha_m]\in \mathfrak H$, the Gromov-Witten class for infinite root stacks is defined as 
\[
\Omega_{g,m,\beta}^{X_{D,\infty}}([\alpha_1],\ldots,[\alpha_m])=\pi_*\left(\prod_{j=1}^m \overline{\on{ev}}_j^*([\alpha_j])\cap \left[\bM_{g,m}(X_{D,\infty},\beta)\right]^{\on{vir}}\right)\in H^*(\bM_{g,m},\mathbb Q),
\]
where contact orders are specified by insertions. We then define the class
\[
\Omega_{g,m}^{X_{D,\infty}}([\alpha_1],\ldots,[\alpha_m])=\sum_{\beta\in H_2(X,\mathbb Q)}\Omega_{g,m,\beta}^{X_{D,\infty}}([\alpha_1],\ldots,[\alpha_m])q^\beta.
\]
\end{defn}

It is straightforward to check that $\Omega_{g,m}^{X_{D,\infty}}$ satisfies the first two axioms of CohFT. The proof of the splitting axiom is parallel to the proof in \cite[Theorem 3.16]{FWY19}. Therefore, we conclude that
\begin{thm}
$\Omega_{g,m}^{X_{D,\infty}}$ forms a partial CohFT.
\end{thm}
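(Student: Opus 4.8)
The plan is to verify the three required axioms of a partial CohFT for the classes $\Omega_{g,m}^{X_{D,\infty}}$ directly from the definition of the formal Gromov-Witten theory of $X_{D,\infty}$ as the $\prod_i r_i^0$-coefficient of $\left(\prod_i r_i^{s_{i,-}}\right)\tau_*[\bM_{g,\{\vec s^j\}}(X_{D,\vec r},\beta)]^{\on{vir}}$, and to reduce each axiom to the corresponding statement for orbifold Gromov-Witten theory of the finite root stacks $X_{D,\vec r}$ (where it is known), then take the limit. First I would record that the symmetric-group invariance (axiom 1) is immediate: permuting markings permutes the insertions and the virtual class is symmetric, and taking the constant term in $\vec r$ commutes with this. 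Next, for the fundamental-class axiom (axiom 2), I would use the string equation, already established as Proposition/equation (\ref{string-equ}) in Section \ref{Sec:univ_eqns}, in its ``cycle-valued'' form on the moduli space; concretely, inserting $[1]_{\vec 0}$ at the $(m+1)$-st marking and pushing forward along $\pi:\bM_{g,m+1}\to\bM_{g,m}$ gives back $\pi^*$ of the class with that marking removed, because the extra marking carries the trivial contact vector $\vec 0$ and thus behaves exactly as an interior marking of $X$. This is the same argument as in \cite[Section 3.5]{FWY19} for a smooth divisor, with the only new bookkeeping being that the divisor is now $D=D_1+\cdots+D_n$; the contact-order data $\{\vec s^j\}$ is simply carried along unchanged.

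The substantive point is the splitting axiom (axiom 3). Here I would follow the proof of \cite[Theorem 3.16]{FWY19} closely. The strategy is: for finite $\vec r$, the orbifold Gromov-Witten classes of $X_{D,\vec r}$ satisfy the splitting axiom with respect to gluing morphisms $\rho_t$ on $\overline{M}_{g,m}$, where the sum on the right runs over the basis of the orbifold state space of $X_{D,\vec r}$ with the appropriate inverse-pairing coefficients $\eta^{jk}$; this is the standard gluing/degeneration property of orbifold GW theory. One then matches the orbifold state space of $X_{D,\vec r}$ (whose summands are cohomologies of the various twisted sectors $D_I\times B\mu$) with the limiting state space $\mathfrak H=\bigoplus_{\vec s}H^*(D_{I_{\vec s}})$ of Section \ref{sec:state-space}, under which the orbifold Poincaré pairing limits to the pairing (\ref{eqn:pairing}) — in particular the dual of $\tilde T_{\vec s,k}$ is $\tilde T_{-\vec s}^k$, which is exactly the $\eta^{jk}$ appearing in the splitting formula. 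The node being split contributes a contact vector $\vec s$ on one branch and $-\vec s$ on the other, so the node sum over the orbifold basis becomes, in the limit, the sum $\sum_{\vec s,k}$ over $\tilde T_{\vec s,k}\otimes \tilde T_{-\vec s}^k$. Then I would observe that the normalizing powers of $r_i$ are compatible with the splitting: the number $s_{i,-}$ of negative contact orders of a disconnected curve is the sum of those on its two pieces, and the negative contact order $-\vec s$ created at the new node on one branch is matched by a positive contact order $\vec s$ on the other (contributing $0$ to $s_{i,-}$ there), so $\prod_i r_i^{s_{i,-}}$ factors as the product of the two normalizing factors; hence taking the $\prod_i r_i^0$-coefficient distributes over the product on the right-hand side. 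Finally, taking the constant term in $\vec r$ on both sides — which, by the polynomiality Theorems \ref{thm:poly} and \ref{thm:poly-neg}, is a well-defined ring operation commuting with multiplication of the (polynomial-in-$r$) cycle classes — yields the splitting axiom for $\Omega_{g,m}^{X_{D,\infty}}$.

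I expect the main obstacle to be the careful bookkeeping in the splitting step: one must check that the degeneration/gluing formula for orbifold GW theory of $X_{D,\vec r}$ really does specialize, marking-by-marking and node-by-node, to a sum indexed by pairs $(\vec s,k)$ with the correct dual pairing, and that the age shifts defining $\deg^0$ in (\ref{deg-0}) are consistent with the gluing (the age of the normal bundle at the two branches of a node sum to $1$ when the contact order is nonzero, which is precisely the ``$\#\{i:s_i<0\}$'' shift reversing sign between $\vec s$ and $-\vec s$). The proof of the loop axiom is deliberately \emph{not} attempted: just as in \cite[Section 3.5]{FWY19}, it fails because the relative/orbifold state space is not self-dual in the naive way needed (the contact orders on the two new markings created by $\rho_l$ would have to be $\vec s$ and $-\vec s$, but the divisor data forces a genuine asymmetry), which is exactly why the conclusion is a \emph{partial} CohFT in the sense of \cite{LRZ}. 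I would conclude by remarking that the equality $\Omega_{0,3}([\alpha_1]_{\vec s^1},[\alpha_2]_{\vec s^2},[1]_{\vec 0})=([\alpha_1]_{\vec s^1},[\alpha_2]_{\vec s^2})$ — the normalization half of what would be the loop axiom — does hold and follows from the genus-zero three-point computation already used to define the product on $\mathfrak H$ in Section \ref{sec:state-space}, so the structure is as close to a full CohFT as the geometry allows.
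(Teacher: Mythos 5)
Your proposal is correct and follows essentially the same route as the paper, which verifies the first two axioms directly and proves the splitting axiom by the argument of \cite[Theorem 3.16]{FWY19}, i.e.\ by the gluing property for the finite root stacks $X_{D,\vec r}$ followed by taking the constant term in $\vec r$. The one bookkeeping caveat: the two branch normalizations do not literally multiply to the original $\prod_i r_i^{s_{i,-}}$ (the new node adds an extra $\prod_{i:s_i\neq 0} r_i$), but this factor is exactly absorbed by the orbifold Poincar\'e pairing on the twisted sector, which is precisely the node-by-node matching you flag and which \cite{FWY19} carries out.
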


It is already known in \cite{FWY19} that the loop axiom does not hold for relative Gromov-Witten theory. Therefore, it does not hold for the formal Gromov-Witten theory of infinite root stacks. It would be interesting to find a replacement of the loop axiom. Some results along this direction has been proved in \cite{You20} by studying orbifold Gromov-Witten invariants of finite root stacks with mid-ages.

\bibliographystyle{amsxport}

\end{document}